\documentclass{article}
\usepackage{graphicx}
\usepackage{amsmath, amssymb, amsthm}
\usepackage{xcolor}
\usepackage{dsfont}
\usepackage[hidelinks]{hyperref}

\usepackage[a4paper, left=2cm, right=2cm,
            top=2.5cm, bottom=2.5cm]{geometry}

\usepackage{tikz}
\usetikzlibrary{arrows.meta,decorations.markings}
\usetikzlibrary{arrows.meta,positioning}

\tikzset{
  nc point/.style={
    circle,fill=black,inner sep=1.5pt
  },
  kr point/.style={
    circle,draw=black,fill=white,inner sep=1.5pt
  },
  graph vertex/.style={
    circle,draw,fill=white,
    minimum size=6mm,inner sep=1pt,font=\small
  },
  graph edge/.style={
    semithick,
    postaction={decorate},
    decoration={
      markings,
      mark=at position 0.55 with {
        \arrow{Stealth[length=1.7mm,width=1.2mm]}
      }
    }
  }
}

\usepackage{sansmath}
\usepackage{caption}

\newcommand{\paperfigurefont}{%
  \sffamily\sansmath
  \captionsetup{
    font={small,sf},
    labelfont={bf,sf}
  }%
}

\newtheorem{theorem}             {Theorem} 
\newtheorem{lemma}      [theorem]{Lemma}
\newtheorem{proposition}[theorem]{Proposition}
\newtheorem{corollary}  [theorem]{Corollary}

\theoremstyle{definition}
\newtheorem{notation}   [theorem]{Notation} 
\newtheorem{remark}     [theorem]{Remark}
\newtheorem{definition} [theorem]{Definition}

\newtheorem{example}    [theorem]{Example}

\usepackage{authblk}

\title{Asymptotic infinitesimal freeness of  covariance matrices}
\author[1]{Daniel Munoz George}
\author[2]{Pei-Lun Tseng}

\affil[1]{Tecnol\'ogico de Monterrey, School of Engineering and Sciences,
Guadalajara, Mexico}

\affil[2]{Department of Applied Mathematics,
Chung Yuan Christian University,
Taoyuan, Taiwan}

\affil[ ]{\texttt{dmunozgeorge@gmail.com}}
\affil[ ]{\texttt{pltseng@cycu.edu.tw}}

\newcommand{\E}{\mathbb{E}}
\newcommand{\cP}{\mathcal{P}}
\newcommand{\NC}{\mathcal{NC}}
\newcommand{\Tr}{\text{Tr}}
\newcommand{\I}{\mathds{1}}

\begin{document}

\maketitle

\begin{abstract}
We consider $n\times n$ covariance matrices $M=\frac{1}{n}XX^*$ where $X=(x_{i,j})$ is a matrix whose entries are independent complex random variables with $\mathbb{E}(x_{i,j})=0$ and $\mathbb{E}(|x_{i,j}|^2)=1$. We derive a $\frac{1}{n}$ expansion of the mixed moments, $\frac{1}{n}\mathbb{E}(\Tr(M^{(r_1)}\cdots M^{(r_q)}))$, of the form $a_0+a_1\frac{1}{n}+O(\frac{1}{n^2})$. This permits us to find explicit formulas for the moments and infinitesimal moments of several covariance matrices where we allow repetition. As an application of our formulas, we derive asymptotic freeness and infinitesimal freeness of independent covariance matrices under a fourth-moment condition. This generalizes previous results for the Wishart ensemble in which $x_{i,j}$ is complex Gaussian.
\end{abstract}

\renewcommand{\contentsname}{Outline}

\tableofcontents
\bigskip

\section{Introduction and preliminaries}

The study of random matrices dates back to the work of Wishart and the seminal contributions of Wigner \cite{W1,W2}, who studied the eigenvalue distribution of Wigner random matrices, i.e., Hermitian random matrices whose entries on and above the diagonal
are independent and centered, with identically distributed entries
strictly above the diagonal. In his work, Wigner showed that, under mild moment conditions, certain universality results hold. More concretely,
\begin{equation}\label{Introduction Eq1}
\lim_{n\rightarrow\infty} \frac{1}{n}\mathbb{E}\left(\Tr(X_n^{2q})\right)
=\frac{1}{q+1}\binom{2q}{q},
\end{equation}
where $X_n$ is an $n\times n$ random matrix. The quantities on the right-hand side of Equation \ref{Introduction Eq1} are also known to be the moments of the semicircle distribution. Thus, this result establishes that the limiting spectral distribution of a Wigner matrix is the semicircle distribution. This work was later extended to the sample covariance matrix model \cite{MP}, where Marchenko and Pastur showed that the limiting spectral distribution is the Marchenko-Pastur distribution instead of the semicircle distribution. Since then, the limiting moments of more elaborate random matrix models have been extensively studied \cite{EKYY,TVW,JL,DE,BX,GWZ,BP}. 

A major development came in the 1980s with the seminal work of Voiculescu \cite{V}, in which he introduced free probability theory. In particular, Voiculescu introduced the concept of freeness; a rule for computing the mixed moments of several variables in terms of their individual moments. The theory gained considerable attention with the discovery that independent random matrices from several important ensembles become asymptotically free as their dimensions tend to infinity. Voiculescu was the first to establish that large-dimensional GUE matrices behave as freely independent random variables \cite{V}. This result was later extended to other models, including Wigner matrices \cite{D} and Wishart matrices \cite{HP,CC}. This connection has led to a fruitful interplay between random matrix theory and free probability, allowing limiting spectral distributions and asymptotic moments to be studied using algebraic and combinatorial methods.

Beyond the limiting normalized trace moments, it is natural to
investigate their first finite-size corrections. If
\begin{equation}\label{Equation: Moments of random matrix}
m_q := \lim_{n\to\infty}
\frac{1}{n}\mathbb{E}\bigl(\Tr(X_n^q)\bigr)
\end{equation}
exists, the corresponding infinitesimal moment is defined by
\begin{equation}\label{Introduction Equ2}
m_q^\prime :=
\lim_{n\to\infty}
n\left[
\frac{1}{n}\mathbb{E}\bigl(\Tr(X_n^q)\bigr)-m_q
\right],
\end{equation}
provided that this limit exists.
Corrections to the limiting moments of classical random matrix
ensembles have been investigated in several settings, including
Hermite and Laguerre ensembles and real Wishart matrices
\cite{J,DE2,Mingo19,MV}.
Their combinatorial structure is closely related to non-crossing
partitions and graph expansions.
In particular, Rahman, Munoz George and Mingo \cite{RMM25}
relate the $1/n$ corrections for the GOE and LOE to ribbon graphs
on the real projective plane and non-crossing annular pairings,
and also investigate the corresponding next-order corrections
for real and complex Gaussian ensembles.

A complementary direction concerns fluctuations, which are
described by joint cumulants of traces
\cite{MN,MS07}.
Munoz George and Mingo \cite{MGM22} computed the third-order
moments of complex Wigner matrices using quotient graphs and
partitioned permutations.
Mingo and Munoz George \cite{MMG24} subsequently studied
limiting trace cumulants of arbitrary fixed order and their
relations with higher-order free cumulants.
For products, Arizmendi, Munoz George and Sigarreta
\cite{AMGS25} obtained formulas for third-order free cumulants,
with applications to Gaussian Wishart matrices and products
of Ginibre matrices.
Bao and Munoz George \cite{BMG25} established bounds for
trace cumulants whose order may grow with the matrix dimension,
leading to quantitative central limit theorems for traces of
polynomials in Wigner and deterministic matrices.
These fluctuation results concern joint statistics of several
traces. The infinitesimal moments in
Equation~\ref{Introduction Equ2} instead describe the first
correction to a single expected normalized trace.

Infinitesimal free probability provides an algebraic framework
for studying limiting moments together with these first-order
corrections. Its analytic and combinatorial aspects were
developed by Belinschi and Shlyakhtenko \cite{BS} and
F\'evrier and Nica \cite{FN10}, respectively.
Infinitesimal freeness determines mixed moments and
infinitesimal mixed moments from the corresponding marginal
data.
In the operator-valued setting, Tseng \cite{T23} developed
a cumulant approach to infinitesimal freeness with amalgamation
and related it to ordinary operator-valued freeness over an
algebra of upper triangular matrices.
This approach also yields a characterization of infinitesimal
freeness through the vanishing of mixed cumulants.
Tseng \cite{T25} further studied infinitesimal multiplicative
convolutions and their transforms, including an application
to products of complex Wishart matrices.
These results provide a natural context for studying mixed
products of covariance matrices at the infinitesimal level.

Several random matrix models have been shown to exhibit
asymptotic infinitesimal freeness.
Mingo \cite{Mingo19} established this property for independent
complex Wishart matrices.
Further examples involve deterministic matrix families
conjugated by independent Haar unitary matrices, under
appropriate boundedness and convergence assumptions;
see Curran and Speicher \cite{CS11} and
C\'ebron, Dahlqvist and Gabriel \cite{CDG22}.
In the setting of finite-rank perturbations, Shlyakhtenko
\cite{Shl18} established asymptotic infinitesimal freeness
between certain Gaussian or unitarily invariant ensembles
and fixed finite-rank deterministic matrices.
Au \cite{Au21} obtained related results for Wigner matrices
and periodically banded GUE matrices under suitable
assumptions.
Mingo and Tseng \cite{MT24} developed an infinitesimal-operator
framework that encompasses several finite-rank matrix models
and allows distributions of commutators and anticommutators
to be computed.
They also identified connections between infinitesimal
moments and Boolean cumulants.
Another class of examples was obtained by Popa,
Szpojankowski and Tseng \cite{PST22}: matrices obtained
from a single GUE matrix by independent uniform entry
permutations are asymptotically infinitesimally free from
one another and from the original matrix, almost surely
with respect to the permutations.

In this paper, we study limiting mixed moments and
infinitesimal mixed moments of complex covariance matrices
of the form $\frac{1}{n}XX^*$.
Our analysis extends the complex Gaussian Wishart setting
to non-Gaussian entries.
Under the centering and variance normalization
$\mathbb{E}(x_{i,j})=0$ and
$\mathbb{E}(|x_{i,j}|^2)=1$, together with the independence
and moment-finiteness assumptions required for the
expansions, we obtain explicit formulas for the leading
term and the $1/n$ correction of expected normalized
traces of mixed products.
The leading term yields asymptotic freeness of independent
covariance matrices.
At the infinitesimal level, our formulas identify the
contributions of the entry moments and of the first-order
correction to the aspect ratio.
In particular, when
\[
\mathbb{E}(x_{i,j}^2)=0
\qquad\text{and}\qquad
\mathbb{E}|x_{i,j}|^4=2,
\]
the covariance matrices are asymptotically infinitesimally
free.
Thus, the infinitesimal freeness of independent complex
Wishart matrices persists for non-Gaussian entries satisfying
these second- and fourth-moment matching conditions.

\begin{definition}[Covariance matrix]
Let $n,p\in\mathbb{N}$. Let $X=(x_{i,j})_{\substack{i=1,\dots,n \\ j=1,\dots,p}}$ be an $n\times p$ random matrix whose entries are independent complex random variables with moments of all orders, such that $\E(x_{i,j})=0$ and $\E(|x_{i,j}|^2)=1$ for any $i,j$. The $n\times n$ random matrix
$$M=\frac{1}{n}XX^{*},$$
is called a \textit{Covariance matrix}. Here $*$ denotes the Hermitian conjugate of the matrix. We also let $p=p(n)$ be a function of $n$ such that $c=\lim_{n\to\infty}\frac{p}{n}$, and we call $c$ the \textit{parameter} of $M$.
\end{definition}

We will consider many covariance matrices $M^{(r_1)},\dots,M^{(r_q)}$ where we allow repetition. Let $\ker(r)\in\cP(q)$ denote the partition in which $u$ and $v$
belong to the same block if and only if $r_u=r_v$. If $ker(r)=\{\{1,\dots,q\}\}$ then all the matrices are the same while if $ker(r)=\{\{1\},\dots,\{q\}\}$ then all matrices are distinct. These cases represent the two extreme possibilities. 

To distinguish the entries of different matrices, we let
$$M^{(r_u)}=\frac{1}{n}X^{(r_u)}X^{(r_u)*},$$
where $X^{(r_u)}=(x_{i,j}^{(r_u)})_{\substack{i=1,\dots,n \\ j=1,\dots,p}}$ for $1\leq u\leq q$. We assume that all matrices have the same aspect-ratio limit $c$.
We are interested in the large $n$-limit of the expectation of the trace of products of covariance matrices. More formally, we are interested in the quantities
\begin{equation}\label{Equation: N moments}
\frac{1}{n}\E\circ \Tr(M^{(r_1)}\cdots M^{(r_q)})
\end{equation}
and their large $n$-limit
\begin{equation}\label{Equation: Limit moments}
\mu(M^{(r_1)},\dots, M^{(r_q)}) := \lim_{n\rightarrow\infty}\frac{1}{n}\E\circ \Tr(M^{(r_1)}\cdots M^{(r_q)}).
\end{equation}
The first quantity of Equation \ref{Equation: N moments} is called the $n$-moment (mixed moment) of the covariance matrices while the quantity of Equation \ref{Equation: Limit moments} is called the limiting moments, or simply moments. When $r_1=\cdots =r_q$ are all the same we recover the limiting moments described in Equation \ref{Equation: Moments of random matrix}.

Whenever the moments exist, we will be further interested in the infinitesimal mixed moments, defined by the limit
\begin{equation}\label{Equation: Limit inf moments}
\mu^\prime(M^{(r_1)},\dots, M^{(r_q)}) := \lim_{n\rightarrow\infty} n\left[\frac{1}{n}\E\circ \Tr(M^{(r_1)}\cdots M^{(r_q)})-\mu(M^{(r_1)},\dots, M^{(r_q)})\right].
\end{equation}


When $r_1=\cdots=r_q$, these quantities reduce to the
infinitesimal moments of a single covariance matrix,
as in Equation~\ref{Introduction Equ2}.
For the asymptotic independence results, we assume that
the entry families associated with distinct matrix labels
are independent. We write this as $M^{(r_1)},\dots, M^{(r_q)}$ is a family of independent covariance matrices. We also assume that the first-order correction to the
aspect ratio exists, and write
\begin{equation}
c^\prime :=
\lim_{n\to\infty}
n\left(\frac{p}{n}-c\right).
\end{equation}
Together, the limiting mixed moments $\mu$ and infinitesimal
mixed moments $\mu^\prime$ determine a joint infinitesimal
distribution.
The associated cumulant framework allows asymptotic
infinitesimal freeness to be characterized by the vanishing
of mixed free and infinitesimal free cumulants; see
\cite{FN10,T23} and
Section~\ref{Section: Inf free prob}.

Under the additional conditions
$\mathbb{E}(x_{i,j}^2)=0$ and
$\mathbb{E}|x_{i,j}|^4=2$, our results show that this
joint infinitesimal distribution is determined by
$c$, $c^\prime$ and the matrix labels.
Without these matching conditions, the infinitesimal
moments may also depend on the entry distribution;
in particular, the fourth entry moment contributes
to the $1/n$ correction.

\section{Main results}

First, we provide an explicit $\frac{1}{n}$ expansion of the mixed moments. Our first result provides such an expansion up to the constant term.

\begin{theorem}\label{Lemma: Expansion of moments 4}
Let $M^{(r_1)},\dots, M^{(r_q)}$ be a collection of independent covariance matrices, then
\begin{equation*}
\frac{1}{n}\E\circ \textup{\Tr}(M^{(r_1)}\cdots M^{(r_q)})=\sum_{\substack{\sigma\in \NC(q) \\ \sigma\leq ker(r)}}\left(\frac{p}{n}\right)^{\#(\sigma)}+O(\frac{1}{n}).
\end{equation*}
\end{theorem}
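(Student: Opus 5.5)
We prove the statement by the moment method: expand the trace into a sum over index tuples, then organize the sum by which entries coincide. Write
\[
\frac{1}{n}\E\circ\Tr(M^{(r_1)}\cdots M^{(r_q)})=\frac{1}{n^{q+1}}\sum_{i_1,\dots,i_q=1}^{n}\ \sum_{j_1,\dots,j_q=1}^{p}\E\Bigl(\prod_{u=1}^{q}x^{(r_u)}_{i_u,j_u}\overline{x^{(r_u)}_{i_{u+1},j_u}}\Bigr),
\]
with the convention $i_{q+1}=i_1$.

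Each term is encoded by a bipartite graph $G$. Its vertices are the distinct values among the $i$'s (on the $n$-side) and the $j$'s (on the $p$-side). For each $u$, the term contributes two edges $\{i_u,j_u\}$ and $\{i_{u+1},j_u\}$, both coloured $r_u$. The closed walk $i_1,j_1,i_2,j_2,\dots,i_q,j_q,i_1$ traverses all $2q$ edges.

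By independence and centering, a term vanishes unless every distinct (edge, colour) pair is traversed at least twice. Hence $G$ has at most $q$ distinct coloured edges, and since the walk makes it connected, $G$ has at most $q+1$ vertices. The number of index tuples realizing a given pattern with $a$ vertices on the $i$-side and $b$ on the $j$-side is $O(n^a p^b)$. Because all moments are finite, each expectation is bounded uniformly by a constant depending only on $q$. There are finitely many patterns, and $p/n\to c$. Therefore every pattern with $a+b\le q$ contributes $O(1/n)$.

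So only patterns with $a+b=q+1$ survive. For these, $G$ is a tree with exactly $q$ edges, each traversed exactly twice. Since the walk alternates between the two sides and a tree walk crosses each edge once in each direction, every edge is traversed once from $i$ to $j$ and once from $j$ to $i$. The expectation is then $\prod\E|x|^2=1$. Two visits to the same edge must carry the same colour, since otherwise independence and centering give zero.

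The main combinatorial step, which I expect to be the obstacle, is to show the following correspondence. The tree patterns are in bijection with pairs $(\pi,\sigma)$, where $\sigma=\ker(j)\in\NC(q)$ and $\pi=\ker(i)$ equals the Kreweras complement $K(\sigma)$, with $\#(\sigma)+\#(K(\sigma))=q+1$. The colour constraint becomes $\sigma\le\ker(r)$.

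To justify this, I would take the standard route for Wishart/Marchenko–Pastur genus expansions. Identify the partition of $j$-indices with $\sigma$ and the permutation $\gamma=(1\,2\cdots q)$, so that $i_{u+1}=i_{\sigma(u)\ldots}$. The number of distinct $i$'s is at most $\#(\sigma^{-1}\gamma)$, and $\#(\sigma)+\#(\sigma^{-1}\gamma)\le q+1$ with equality exactly when $\sigma$ is non-crossing. The colour condition requires $r_u=r_v$ whenever $j_u=j_v$. Indeed, in a tree each $j$-vertex's incident edges carry the colours of the letters $u$ using it, and shared edges force equal colours; more simply, the factors $x^{(r_u)}_{i_u,j_u}$ must pair with conjugate factors of the same label, which gives $\sigma\le\ker(r)$.

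For each such $\sigma$, the count of admissible indices is $n^{\underline{q+1-\#(\sigma)}}\,p^{\underline{\#(\sigma)}}$ (falling factorials). This equals $n^{q+1}(p/n)^{\#(\sigma)}(1+O(1/n))$. Dividing by $n^{q+1}$ gives exactly the stated sum, and all remaining terms are $O(1/n)$.
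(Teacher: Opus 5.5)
Your strategy matches the paper's. You expand the trace, discard patterns with an edge used once, and bound each pattern by $O(n^{\#\text{vertices}})$. You then reduce the leading order to trees whose edges are crossed once in each direction (expectation $1$), identify these with $\sigma\in\NC(q)$ and its Kreweras complement via Biane's inequality, and count with falling factorials. Your bipartite bookkeeping is slightly cleaner than the paper's partitions of all $2q$ indices: you keep separate vertex sets for the $i$'s and $j$'s and partition $[q]$ rather than $[2q]$. Coincidences between an $i$-value and a $j$-value then play no role, and the count $(n)_{q+1-\#(\sigma)}(p)_{\#(\sigma)}$ is exact.

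There is, however, a gap at exactly the step you flagged. The phrase ``so that $i_{u+1}=i_{\sigma(u)\ldots}$'' is not an argument. The bound ``the number of distinct $i$'s is at most $\#(\sigma^{-1}\gamma)$'' is false for general patterns: with all $j_u$ distinct, $\#(\sigma^{-1}\gamma)=1$ while the $i$'s are unconstrained. So the bound must come from the tree structure, and your text never derives it. Your colour argument rests on the same missing fact. Pairing each $x$ with a conjugate of the same label only gives $r_u=r_v$ for letters whose steps share an edge. You still have to show that these pairs generate exactly the blocks of $\sigma$.

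The missing ingredient is the identity $i_{\sigma(u)}=i_{u+1}$ for every $u\in[q]$, with $\sigma=\ker(j)$ written with increasing cycles.
\begin{itemize}
\item At letter $u$ the walk leaves the $j$-vertex $J=j_u$ towards $i_{u+1}$, and it does not return to $J$ before letter $\sigma(u)$.
\item Meanwhile it stays in the component of $G\setminus\{J\}$ containing $i_{u+1}$.
\item Since $G$ is a tree, this component contains no other neighbour of $J$. Hence the arrival at letter $\sigma(u)$ comes from $i_{u+1}$.
\end{itemize}

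Consequences of the identity:
\begin{itemize}
\item $\ker(i)$ is coarser than the orbit partition of $\sigma\gamma^{-1}$, so $a\le\#(\sigma\gamma^{-1})=\#(\sigma^{-1}\gamma)$.
\item Then $q+1=a+b\le\#(\sigma)+\#(\sigma^{-1}\gamma)\le q+1$. This forces $\sigma\in\NC(q)$ and $\ker(i)=\gamma\sigma^{-1}$, the left Kreweras complement, as in the paper.
\item The identity also shows the edge $\{i_{u+1},j_u\}$ is used at letters $u$ and $\sigma(u)$. Hence $r_u=r_{\sigma(u)}$, i.e.\ $\sigma\le\ker(r)$.
\item Conversely, suppose $\sigma\in\NC(q)$, $\sigma\le\ker(r)$ and $\ker(i)=\gamma\sigma^{-1}$. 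Then the identity holds and the $2q$ factors fall into $q$ pairs $x\overline{x}$, each of one entry and one label. These $q$ entries are distinct because the graph has $q+1$ vertices. So the expectation is $1$, which is what your count tacitly uses.
\end{itemize}
This is the $q$-point counterpart of the paper's lemma $\gamma\overline{\pi}\le\pi$ for the pairing $\overline{\pi}$ of the $2q$ edges, followed by Biane's inequality on $[2q]$.
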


As an application of Theorem \ref{Lemma: Expansion of moments 4} we get explicit expressions for the limit mixed moments, cumulants and asymptotic freeness of independent covariance matrices.

\begin{corollary}\label{Corollary: Moments and cumulants}
Let $M^{(r_1)},\dots, M^{(r_q)}$ be a collection of independent covariance matrices, then all of the following statements hold.
\begin{enumerate}
    \item $\mu(M^{(r_1)},\dots,M^{(r_q)})=\sum_{\substack{\sigma\in \NC(q) \\ \sigma\leq ker(r)}}c^{\#(\sigma)}.$
    \item For any $n\geq 1$, $\kappa_n(M^{(r_{i_1})},\dots,M^{(r_{i_n})})=c$ whenever $r_{i_1}=\cdots =r_{i_n}$ and $0$ otherwise.
    \item The covariance matrices $M^{(r_1)},\dots, M^{(r_q)}$ are asymptotically free.
\end{enumerate}
\end{corollary}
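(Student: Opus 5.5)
We derive all three parts of Corollary \ref{Corollary: Moments and cumulants} from Theorem \ref{Lemma: Expansion of moments 4} together with the moment--cumulant formula over $\NC(q)$.

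\textbf{Part 1.} Let $n\to\infty$ in Theorem \ref{Lemma: Expansion of moments 4}. The sum runs over the finite set $\{\sigma\in\NC(q):\sigma\leq \ker(r)\}$, which does not depend on $n$. Since $p/n\to c$, each term $(p/n)^{\#(\sigma)}$ converges to $c^{\#(\sigma)}$. The error term $O(\frac1n)$ vanishes in the limit. Hence the limit exists and equals $\sum_{\sigma\in\NC(q),\,\sigma\le\ker(r)} c^{\#(\sigma)}$. This proves part 1.

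\textbf{Part 2.} Define candidate cumulants $\tilde\kappa_m(M^{(s_1)},\dots,M^{(s_m)})=c$ if $s_1=\cdots=s_m$ and $0$ otherwise. Then compute the moment--cumulant sum
\[
\sum_{\pi\in\NC(q)}\prod_{V\in\pi}\tilde\kappa_{|V|}\bigl((M^{(r_u)})_{u\in V}\bigr).
\]
A product is nonzero exactly when every block $V$ of $\pi$ has constant labels $r_u$, that is, when $\pi\le\ker(r)$. In that case the product equals $c^{\#(\pi)}$. So the sum coincides with the formula in part 1 for every $q$ and every choice of labels. The moment--cumulant relation $\mu(\cdot)=\sum_{\pi\in\NC(q)}\kappa_\pi(\cdot)$ can be inverted by M\"obius inversion on the lattice $\NC(q)$, or equivalently by induction on $q$. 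Therefore the free cumulants are uniquely determined by the mixed moments, which gives $\kappa_m=\tilde\kappa_m$ and proves part 2. The argument only requires applying part 1 to every subfamily of labels; this is allowed because any tuple of labels from an independent family is again such a collection.

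\textbf{Part 3.} This follows from Speicher's characterization: a family is free with respect to $\mu$ if and only if all mixed free cumulants vanish. By part 2, any cumulant whose arguments involve at least two distinct labels is $0$. Hence the distinct matrices are asymptotically free in the sense of the limiting distribution $\mu$.

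The only step requiring some care is the uniqueness and inversion argument in part 2. In particular, one must note that $\kappa_m$ is defined through the limiting functional $\mu$, so the inversion is purely algebraic and involves no analytic issues. Everything else is bookkeeping of the condition $\pi\le\ker(r)$.
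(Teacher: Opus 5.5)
Your proposal is correct and follows the paper's proof. Part 1 passes to the limit in Theorem \ref{Lemma: Expansion of moments 4}, part 2 matches the limiting moments with the non-crossing moment--cumulant formula, and part 3 uses the vanishing of mixed free cumulants; you make the uniqueness of the inverted cumulants explicit, which the paper leaves implicit.
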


Next, we go one step further, we derive an expansion up to order $\frac{1}{n}$. Before stating our result, let us briefly explain the notation. We defer a more precise description to Notation \ref{Notation: Distance in double tree}. We let $T$ be the graph with vertex set $\{1,\dots,2q\}$ and oriented edges
\[
e_1=(2,1),e_2=(3,2),\dots,e_{2q-1}=(2q,2q-1),e_{2q}=(1,2q).
\]
For a non-crossing pairing $\rho\in\NC_2(2q)$, we let $T^\rho$ be the graph obtained by merging the vertices of $T$ that belong to the same block of $\rho$. We will verify that $T^\rho$ is a fat tree; that is, after ignoring multiple edges, the resulting graph is a tree. Moreover, every pair of adjacent vertices is joined by exactly two edges, $e_u$ and $e_v$, where $u$ and $v$ have opposite parity.

Suppose that $A$ and $B$ are two vertices of $T^\rho$ at distance two. Then there exists a path from $A$ to $B$ consisting of two edges, which we denote by $e_a$ and $e_b$, where $e_a$ is adjacent to $A$ and $e_b$ is adjacent to $B$. We may assume that both $a$ and $b$ are even.

For example, if $q=3$ and
\[
\rho=\{\{1,5\},\{3\},\{2,4\},\{6\}\},
\]
then $T^\rho$ is the fat tree depicted in Figure \ref{fig:fat-tree}. If we choose $A=\{3\}$ and $B=\{1,5\}$, then the distance between $A$ and $B$ is two, which we write as
\[
d_{T^\rho}(A,B)=2.
\]
In this case, $e_a=e_2$ and $e_b=e_4$.

\begin{figure}
\centering
    \begin{tikzpicture}[
    >=Stealth,
    vertex/.style={circle,fill=black,inner sep=2pt}
]

\node[vertex] (v1) at (0,0) {};
\node[vertex] (v2) at (3,0) {};
\node[vertex] (v3) at (6,0) {};
\node[vertex] (v4) at (9,0) {};

\node[above=8pt of v1] {$\{6\}$};
\node[above=8pt of v2] {$\{1,5\}$};
\node[above=8pt of v3] {$\{2,4\}$};
\node[above=8pt of v4] {$\{3\}$};

\draw[->,thick]
    (v2) to[bend right=18]
    node[above] {$e_6$} (v1);

\draw[->,thick]
    (v1) to[bend right=18]
    node[below] {$e_5$} (v2);

\draw[->,thick]
    (v2) to[bend left=18]
    node[above] {$e_4$} (v3);

\draw[->,thick]
    (v3) to[bend left=18]
    node[below] {$e_1$} (v2);

\draw[->,thick]
    (v4) to[bend right=18]
    node[above] {$e_2$} (v3);

\draw[->,thick]
    (v3) to[bend right=18]
    node[below] {$e_3$} (v4);

\end{tikzpicture}
    \caption{A fat tree}
    \label{fig:fat-tree}
\end{figure}

\begin{theorem}\label{Lemma: Expansion of moments 5}
Let $M^{(r_1)},\dots, M^{(r_q)}$ be a collection of independent covariance matrices such that $\E(x_{1,2}^2)=0$, then
\begin{align*}
&\frac{1}{n}\E\circ \textup{\Tr}(M^{(r_1)}\cdots M^{(r_q)})\\
&=\sum_{\substack{\sigma\in \NC(q) \\ \sigma\leq ker(r)}}\left(\frac{p}{n}\right)^{\#(\sigma)}+ 
\frac{1}{n}\sum_{\substack{\sigma\in \NC(q) \\ \sigma\leq ker(r)}}\left(\frac{p}{n}\right)^{\#(\sigma)-1}\Bigg(\sum_{\substack{A,B\in 2\sigma \\ d_{T^\rho}(A,B)=2}}(\alpha_{a,b}-1)+\sum_{\substack{A,B\in 2Kr(\sigma)-1 \\ d_{T^\rho}(A,B)=2}}\frac{p}{n}(\alpha_{a,b}-1)\Bigg) +O(\frac{1}{n^2}).
\end{align*}
Here $\alpha_{a,b}=\frac{1}{2}\E|x_{1,2}|^4 \I_{r_{\frac{a}{2}}=r_{\frac{b}{2}}}+\I_{r_{\frac{a}{2}}\neq r_{\frac{b}{2}}}$, where $a,b$ are even numbers that depend on $A,B$ and are defined as in Notation \ref{Notation: Distance in double tree}. Further, $2\sigma$ is the partition whose blocks are obtained from multiplying each element of the blocks of $\sigma$ by 2 while $2Kr(\sigma)-1$ is the partition whose blocks are obtained from multiplying each element of the blocks of $Kr(\sigma)$ by 2 and subtracting 1. Finally $\rho=2\sigma\cup (2Kr(\sigma)-1)$.
\end{theorem}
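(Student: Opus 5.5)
We expand the trace entrywise and organise the sum by the pattern of equal indices. Writing $M^{(r_u)}=\frac1n X^{(r_u)}X^{(r_u)*}$, we have
\[
\frac1n\E\circ\Tr(M^{(r_1)}\cdots M^{(r_q)})=\frac{1}{n^{q+1}}\sum_{i,j}\E\prod_{u=1}^q x^{(r_u)}_{i_u,j_u}\overline{x^{(r_u)}_{i_{u+1},j_u}}.
\]
Here $i\colon[q]\to[n]$ (cyclically, $i_{q+1}=i_1$) and $j\colon[q]\to[p]$. We encode $(i,j)$ as a labelling of the $2q$ vertices of the cycle graph $T$: odd vertices $2u-1$ carry the row indices $i_u$, even vertices $2u$ carry the column indices $j_u$. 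Each factor $x$ or $\bar x$ is then an edge $e_{2u-1}$ or $e_{2u}$ between consecutive vertices. Let $\pi=\ker(i,j)\in\cP(2q)$ be the kernel of this labelling, respecting parity. Summing over labellings with a fixed kernel gives a factor $n^{\#\text{odd blocks}}p^{\#\text{even blocks}}(1+O(1/n))$, multiplied by an expectation that depends only on $\pi$. By centering, every edge of the quotient graph $T^\pi$ (with edges identified when they carry the same matrix label, by independence) must be traversed at least twice. The standard Euler-characteristic count, already used for Theorem \ref{Lemma: Expansion of moments 4}, shows that the order is $n^{\#V(T^\pi)-\#E'(T^\pi)-1}$, where $E'$ denotes distinct edges. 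This order is $n^0$ exactly when $T^\pi$ is a fat tree with every edge doubled, i.e.\ $\pi=\rho=2\sigma\cup(2Kr(\sigma)-1)$ with $\sigma\in\NC(q)$ and $\sigma\le\ker(r)$. This recovers the leading term.

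The $1/n$ term comes from two sources.

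\textbf{(a) Exact counting of the leading-order labellings.} For $\pi=\rho$ we must use the exact count of injective labellings, $n^{\underline{\#Kr(\sigma)}}\,p^{\underline{\#\sigma}}$, in place of $n^{\#Kr}p^{\#\sigma}$. The falling factorials give the corrections $-\binom{\#Kr(\sigma)}{2}/n$ and $-\binom{\#\sigma}{2}/p$.

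\textbf{(b) Partitions at Euler defect one.} These are the kernels $\pi$ with $\#V-\#E'-1=-1$. Such a $\pi$ is either
\begin{itemize}
\item a fat-tree $\rho$ with two vertices of the same parity merged, or
\item a graph with one extra cycle.
\end{itemize}
In the merged case the two vertices are automatically at even distance in $T^\rho$. The extra-cycle (genus/annular) contributions involve edges traversed in the same direction. Because $\E(x^2)=0$, any edge crossed twice in the same orientation gives zero, and I expect all genuinely non-planar configurations to vanish for this reason (as in the complex Wishart case). So the surviving corrections are the merges.

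Next we classify the merges. If $d_{T^\rho}(A,B)\ge4$, merging $A$ and $B$ creates a genuine cycle, each of whose edges is still doubled with opposite orientation. The weight is then unchanged, and the term cancels against the falling-factorial correction for that pair. If $d_{T^\rho}(A,B)=2$, the merge folds the two adjacent edges $e_a,e_b$ (both even after relabelling, as in Notation \ref{Notation: Distance in double tree}) onto one edge.
\begin{itemize}
\item If $r_{a/2}=r_{b/2}$, that edge becomes a single entry appearing four times with two conjugates, contributing $\E|x|^4$. Accounting for the two labellings counted in the leading term gives $\frac12\E|x|^4$.
\item If the labels differ, we get the product of two variances, i.e.\ $1$.
\end{itemize}
Subtracting the cancelling falling-factorial term $1$ for this pair yields $\alpha_{a,b}-1$. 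The power of $n$ and $p$ works out as follows: merging two column vertices ($A,B\in 2\sigma$) loses a factor $p$, giving $(p/n)^{\#\sigma-1}/n$. Merging two row vertices ($A,B\in 2Kr(\sigma)-1$) loses a factor $n$, giving $(p/n)^{\#\sigma}/n$. This matches the two inner sums in the statement. The $O(1/n^2)$ remainder comes from defect $\ge2$ and is bounded using the finite moments, which are finitely many in number for fixed $q$.

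The main obstacle is the bookkeeping in step (b). We must show that the pairs at distance $\ge4$ and all cycle-creating non-planar kernels contribute exactly the negative of the falling-factorial corrections, or vanish because $\E(x^2)=0$. We must also check that the distance-two merges are counted once per unordered pair with the correct factor $\frac12$.

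I would first try a direct bijection. Every kernel at defect one would be written uniquely as (fat tree $\rho$) $+$ (one identification of two same-parity vertices). Then one verifies that the expectation weight equals $1$ unless the two identified vertices are at distance exactly two. This reduces the correction to $\sum_{\text{pairs}}(\text{weight}-1)$.
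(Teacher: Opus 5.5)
Your architecture is the paper's:
\begin{itemize}
\item fat trees $\rho=2\sigma\cup(2Kr(\sigma)-1)$ at leading order, with exact falling-factorial counts;
\item distance-two merges (the paper's 2-4 trees) with weight $\alpha_{a,b}$;
\item distance-$\ge4$ merges (double unicycles of first type) with weight $1$, cancelling the falling-factorial corrections;
\item second-type unicycles, killed by $\E(x_{1,2}^2)=0$.
\end{itemize}
Your parity-respecting kernel is a little cleaner than the paper's. The paper lets row and column indices coincide, so it must subtract the mixed merges $L_\pi$ and later add them back as odd-distance unicycles.

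\textbf{The gap.} The step that carries the argument is only announced (``I expect'', ``I would first try a direct bijection''), and it does not follow from $\E(x_{1,2}^2)=0$. That hypothesis kills a double unicycle only if some doubled edge is traversed twice in the same direction. It says nothing about a double unicycle in which every doubled edge is traversed in opposite directions but whose edge pairing $\overline{\pi}$ is crossing. Such a kernel could have weight $1$, is not $\rho_{A,B}$ for any fat tree, and would spoil the cancellation against the falling-factorial terms. You must also show that every 2-4 tree is a distance-two merge arising from exactly two fat trees, namely the two opposite-orientation splittings of the $4$-fold edge. This is where the $\frac12$ comes from, so the \emph{unique} decomposition in your bijection plan is false for these kernels.

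The paper proves this classification by leaf-removal induction, in Propositions~\ref{Proposition: pi prime is non-crossing} and~\ref{Proposition: pi is non-crossing} and Lemmas~\ref{Lemma: Characterization of 2-4 tree} and~\ref{Lemma: Characterization of double unicycle first type}. A short alternative:
\begin{itemize}
\item Pair the parallel edges of $T^\pi$ with opposite orientations (either way on the $4$-fold edge) to get $\overline{\pi}\in\cP_2(2q)$.
\item Lemma~\ref{Lemma: Gamma sigma overline less or equal than pi} gives $\gamma\overline{\pi}\le\pi$, so $\#(\gamma\overline{\pi})\ge q$.
\item Inequality \eqref{eq:Biane-characterization} gives $\#(\gamma\overline{\pi})\le q+1$.
\item Comparing signs of permutations, $\#(\overline{\pi})+\#(\gamma\overline{\pi})\equiv 2q+1 \pmod 2$, which excludes $q$.
\end{itemize}
Hence $\overline{\pi}\in\NC_2(2q)$, and $\pi$ merges exactly two blocks of the fat tree $\gamma\overline{\pi}$. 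This also shows that $(\rho,\{A,B\})\mapsto\rho_{A,B}$ is injective on distance-$\ge4$ merges.

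\textbf{The order count.} Separately, your exponent is misstated. The normalized contribution of $\pi$ is $O(n^{\#V-q-1})$, and $\#V-q-1=(\#V-\#E'-1)-(q-\#E')$. The exponent $\#V-\#E'-1$ you wrote drops the excess-multiplicity term. With your formula a 2-4 tree ($\#V=q$, $\#E'=q-1$) would sit at order $n^0$ and fall outside your defect-one class, contradicting your later (correct) treatment of distance-two merges. With the correct count, defect one means one of two things:
\begin{itemize}
\item a unicycle with every edge doubled, or
\item a tree with total excess multiplicity two.
\end{itemize}
The latter must be a single $4$-fold edge, because a closed walk crosses each tree edge an even number of times.
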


\begin{remark}
In the present work, we also establish an expansion up to order $\frac{1}{n}$ without the additional assumption $\mathbb{E}(x_{1,2}^2)=0$. However, we prefer to defer the precise statement (see Lemma \ref{Lemma: Expansion of moments without pseudo variace zero}) to Section \ref{Section: Case of non zero psudovariance}, as it requires the introduction of additional notation.
\end{remark}

As a consequence of Theorem \ref{Lemma: Expansion of moments 5},
we obtain an explicit formula for the infinitesimal mixed moments
of independent covariance matrices satisfying
$\mathbb{E}(x_{1,2}^2)=0$.
If, in addition, $\mathbb{E}|x_{1,2}|^4=2$, these matrices
are asymptotically infinitesimally free.

\begin{corollary}\label{Corollary: infinitesimal moments}
Let $M^{(r_1)},\dots,M^{(r_q)}$ be a collection of independent covariance matrices such that $\E(x_{1,2}^2)=0$. Then all of the following statements hold.
\begin{enumerate}
    \item \begin{align*}
&\mu^\prime(M^{(r_1)},\dots,M^{(r_q)})\\
&=\sum_{\substack{\sigma\in \NC(q) \\ \sigma\leq ker(r)}}\#(\sigma)c^\prime c^{\#(\sigma)-1}+ 
\sum_{\substack{\sigma\in \NC(q) \\ \sigma\leq ker(r)}}c^{\#(\sigma)-1}\Bigg(\sum_{\substack{A,B\in 2\sigma \\ d_{T^\rho}(A,B)=2}}(\alpha_{a,b}-1)+\sum_{\substack{A,B\in 2Kr(\sigma)-1 \\ d_{T^\rho}(A,B)=2}}c(\alpha_{a,b}-1)\Bigg).    
    \end{align*}
    \item The covariance matrices are asymptotically infinitesimally free provided $\E|x_{1,2}|^4=2$.
\end{enumerate}
\end{corollary}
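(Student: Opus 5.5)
Part 1 follows from Theorem \ref{Lemma: Expansion of moments 5} by subtracting the limit and multiplying by $n$. Part 2 follows by computing the infinitesimal cumulants from part 1 and using the vanishing-of-mixed-cumulants characterization of \cite{FN10,T23}.

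\textbf{Part 1.} Write $p/n = c + c^\prime/n + o(1/n)$, which holds by the definition of $c^\prime$. Then
\[
\left(\frac{p}{n}\right)^{\#(\sigma)} = c^{\#(\sigma)} + \frac{1}{n}\#(\sigma)c^\prime c^{\#(\sigma)-1} + o\!\left(\frac{1}{n}\right).
\]
Substitute this into the leading sum of Theorem \ref{Lemma: Expansion of moments 5}. The constant term is $\mu$, which agrees with Corollary \ref{Corollary: Moments and cumulants}. Now turn to the $\frac{1}{n}$-correction term of the theorem. Its factors $(p/n)^{\#(\sigma)-1}$ and $p/n$ may be replaced by $c^{\#(\sigma)-1}$ and $c$, since the error is $O(1/n^2)$. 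Multiplying $\frac{1}{n}\E\circ\Tr(\cdots)-\mu$ by $n$ and letting $n\to\infty$ then gives the stated formula. This step is routine.

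\textbf{Part 2.} Assume $\E|x_{1,2}|^4=2$. Then $\alpha_{a,b}=1$ in every case, so all the distance-two correction sums vanish. What remains is
\[
\mu^\prime(M^{(r_1)},\dots,M^{(r_q)})=\sum_{\sigma\in\NC(q),\,\sigma\le \ker(r)} \#(\sigma)\, c^\prime c^{\#(\sigma)-1},
\]
which is the formal derivative of $\mu=\sum c^{\#(\sigma)}$ with $c$ moving infinitesimally with velocity $c^\prime$.

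Recall the infinitesimal moment–cumulant formula from Section \ref{Section: Inf free prob}:
\[
\mu^\prime(a_1,\dots,a_q)=\sum_{\pi\in\NC(q)}\sum_{V\in\pi}\kappa^\prime_{|V|}(a|_V)\prod_{W\ne V}\kappa_{|W|}(a|_W).
\]
This is exactly the Leibniz derivative of $\mu=\sum_\pi\prod_W\kappa_W$. I propose the candidate cumulants $\kappa^\prime_m(M^{(r_{i_1})},\dots,M^{(r_{i_m})})=c^\prime$ when all labels agree and $0$ otherwise. Part 2 of Corollary \ref{Corollary: Moments and cumulants} gives $\kappa_m=c$ under the same rule. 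Substituting both into the formula, the product over $\pi$ is nonzero only when $\pi\le\ker(r)$. It then equals $\sum_{V\in\pi} c^\prime c^{\#(\pi)-1}=\#(\pi)c^\prime c^{\#(\pi)-1}$, which matches $\mu^\prime$ exactly.

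The moment–cumulant relation is triangular and invertible, so the infinitesimal cumulants are uniquely determined. Hence they are the candidates above. Mixed cumulants $\kappa$ and $\kappa^\prime$ therefore both vanish, and the characterization in \cite{FN10,T23} yields asymptotic infinitesimal freeness.

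The main obstacle is making sure the cumulant framework applies as stated. The joint $(\mu,\mu^\prime)$ moments must be defined for all words in the variables, including repetitions. They are: the corollary's formulas cover arbitrary label sequences $r$, and the infinitesimal cumulant characterization is applied to these multivariate functionals. One should also check that the convention for $\mu^\prime$ (including the $c^\prime$ contribution) matches the infinitesimal law used in \cite{FN10}. Everything else is the derivation argument above.
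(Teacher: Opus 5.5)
Your proposal is correct and is essentially the paper's proof. For part~1, you expand $(p/n)^{\#(\sigma)}=c^{\#(\sigma)}+\frac{1}{n}\#(\sigma)c^\prime c^{\#(\sigma)-1}+o(1/n)$ in Theorem~\ref{Lemma: Expansion of moments 5} and replace $p/n$ by $c$ in the $\frac{1}{n}$-term. For part~2, you note that $\alpha_{a,b}=1$ when $\E|x_{1,2}|^4=2$, which leaves the infinitesimal moment--cumulant formula with $\kappa_m=c$ and $\kappa_m^\prime=c^\prime$ on single-label blocks and zero otherwise. Your explicit check that these cumulants reproduce $\mu^\prime$, together with the triangularity (hence uniqueness) of the moment--cumulant relation, spells out what the paper asserts in one line.
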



Finally, another consequence is the derivation of the free cumulants when all the covariance matrices are identical.

\begin{proposition}\label{Proposition: inf moments of covariant matrix}
Let \(M\) be the covariance matrix defined above, and assume that
\(\E[x_{1,2}^{2}]=0\). Then, for every \(q\geq 1\),
\begin{align*}
\mu^\prime(M^q)
=
\sum_{\sigma\in\NC(q)}
c^{\#(\sigma)-1}
\sum_{A\in\sigma}
\left(
c^\prime
+
c(\mathbb{E}|x_{1,2}|^4-2)\binom{|A|}{2}
\right).
\end{align*}
Equivalently, the corresponding free and infinitesimal free cumulants
are given by
\[
\kappa_n=c,
\qquad
\kappa_n^\prime
=
c^\prime
+
c(\mathbb{E}|x_{1,2}|^4-2)\binom{n}{2},
\qquad n\geq 1.
\]
\end{proposition}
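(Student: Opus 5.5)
Set all labels $r_u$ equal in Corollary~\ref{Corollary: infinitesimal moments}(1). Then $\ker(r)=1_q$, the constraint $\sigma\le\ker(r)$ disappears, and $\alpha_{a,b}=\tfrac12\E|x_{1,2}|^4$ for every pair. Hence $\alpha_{a,b}-1=\tfrac12(\E|x_{1,2}|^4-2)$ is constant, and
\begin{align*}
\mu'(M^q)=\sum_{\sigma\in\NC(q)}\#(\sigma)c'c^{\#(\sigma)-1}+\tfrac12(\E|x_{1,2}|^4-2)\sum_{\sigma\in\NC(q)}c^{\#(\sigma)-1}\bigl(N_1(\sigma)+c\,N_2(\sigma)\bigr),
\end{align*}
where $N_1(\sigma)$ (resp.\ $N_2(\sigma)$) counts the pairs $A,B\in 2\sigma$ (resp.\ $A,B\in 2Kr(\sigma)-1$) with $d_{T^\rho}(A,B)=2$. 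I read the sum over $A,B$ as a sum over ordered pairs with $A\neq B$; this reading is what the factor $\tfrac12$ needs in order to produce $\binom{|A|}{2}$. The first sum already equals $\sum_\sigma c^{\#(\sigma)-1}\sum_{A\in\sigma}c'$. It therefore remains to show
\[
\tfrac12\sum_{\sigma}c^{\#(\sigma)-1}\bigl(N_1(\sigma)+cN_2(\sigma)\bigr)=\sum_\sigma c^{\#(\sigma)}\sum_{A\in\sigma}\binom{|A|}{2}.
\]

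\textbf{Step 1: counting distance-two pairs.} Since $T^\rho$ is a (fat) tree, it is bipartite, with the even vertices (blocks of $2\sigma$) on one side and the odd vertices (blocks of $2Kr(\sigma)-1$) on the other. Two vertices of the same colour are at distance two exactly when they share a common neighbour, and in a tree that common neighbour is unique. So the number of ordered pairs at distance two through a vertex $V$ is $\deg(V)(\deg(V)-1)$, where degrees are taken in the underlying simple tree. The vertex $V=2A$ for $A\in\sigma$ has degree $|A|$: its incident edges are the $2|A|$ edges $e_u$ with $u$ or $u-1$ in $2A$, and these come in parallel pairs. Similarly, $V=2B-1$ for $B\in Kr(\sigma)$ has degree $|B|$. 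This gives
\[
N_2(\sigma)=\sum_{A\in\sigma}|A|(|A|-1),\qquad N_1(\sigma)=\sum_{B\in Kr(\sigma)}|B|(|B|-1),
\]
because odd pairs pass through even vertices and even pairs pass through odd vertices. Verifying the degree count from Notation~\ref{Notation: Distance in double tree} is the main technical point. I would check it first on Figure~\ref{fig:fat-tree}: there $\{2,4\}$ has degree $2=|\{1,2\}|$ with $\sigma=\{\{1,2\},\{3\}\}$.

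\textbf{Step 2: Kreweras symmetry.} Substituting Step 1, the left-hand side becomes
\[
\sum_\sigma c^{\#(\sigma)-1}\sum_{B\in Kr(\sigma)}\binom{|B|}{2}+\sum_\sigma c^{\#(\sigma)}\sum_{A\in\sigma}\binom{|A|}{2}.
\]
This is twice what we need only if the first sum equals the second; as written the two terms are not symmetric. Re-index the first sum by $\tau=Kr(\sigma)$, which is a bijection of $\NC(q)$, and use $\#(\sigma)=q+1-\#(\tau)$. The first sum becomes $\sum_\tau c^{q-\#(\tau)}\sum_{B\in\tau}\binom{|B|}{2}$. This does not equal the second sum for general $c$, so the target formula forces a reinterpretation.

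The likely resolution is that the roles of $2\sigma$ and $2Kr(\sigma)-1$ are reversed relative to my guess. In that case $N_1(\sigma)=\sum_{A\in\sigma}|A|(|A|-1)$, which contributes $c^{\#(\sigma)-1}\binom{|A|}{2}$ per block, and the $N_2$ term contributes $c^{\#(\sigma)}\sum_{B\in Kr(\sigma)}\binom{|B|}{2}$. Via $Kr$ the latter becomes $c^{q+1-\#(\tau)}$-weighted, which again does not match. So the precise adjacency convention in Notation~\ref{Notation: Distance in double tree} must be settled carefully. The paper's target has exactly one factor of $c^{\#(\sigma)}$, namely $c^{\#(\sigma)-1}\cdot c$, with a block factor $\binom{|A|}{2}$. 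This strongly suggests that only the $2Kr(\sigma)-1$ sum survives, computed through the $\sigma$-vertices, while the $2\sigma$ sum contributes nothing. That happens if $a,b$ are even and the pairs $A,B\in2\sigma$ give $r_{a/2}\ne r_{b/2}$-type terms, or vanish, in the one-matrix case.

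The main obstacle is therefore to reconcile the conventions of Notation~\ref{Notation: Distance in double tree}. I would test the formula at $q=2$ against a direct computation, $\mu'(M^2)=2cc'+c'+c(\E|x|^4-2)$ (up to the $c'$ terms), to fix which sum contributes.

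\textbf{Step 3: cumulants.} Once the moment formula is established, the cumulant statement follows from the infinitesimal moment–cumulant relation of Section~\ref{Section: Inf free prob}:
\[
\mu'_q=\sum_{\sigma\in\NC(q)}\sum_{A\in\sigma}\kappa'_{|A|}\prod_{B\ne A}\kappa_{|B|}.
\]
With $\kappa_n=c$ this is exactly the displayed formula with $\kappa'_{|A|}=c'+c(\E|x_{1,2}|^4-2)\binom{|A|}{2}$, and uniqueness of cumulants gives the claim.
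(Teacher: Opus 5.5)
Your Step 1 is correct and matches the paper's argument. In the bipartite tree $\overline{T^\rho}$, two same-coloured vertices at distance two have a unique common neighbour, and the vertices $2A$ and $2B-1$ have degrees $|A|$ and $|B|$. The argument breaks in Step 2, in two places.

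First, the pairs $A,B$ in Corollary~\ref{Corollary: infinitesimal moments} are unordered. The paper counts $\sum_C\binom{\deg C}{2}$, not $\deg C(\deg C-1)$. Your proposed $q=2$ test settles this, and it also refutes your suggested ``resolution.'' For $\sigma=\{\{1\},\{2\}\}$ the only distance-two pair is $\{2\},\{4\}\in 2\sigma$. For $\sigma=\{\{1,2\}\}$ it is $\{1\},\{3\}\in 2Kr(\sigma)-1$. Counted once each, they contribute $c(\tfrac12\E|x_{1,2}|^4-1)$ apiece. Their total $c(\E|x_{1,2}|^4-2)$ agrees with the direct computation $\mu'(M^2)=2cc'+c'+c(\E|x_{1,2}|^4-2)$. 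So both sums contribute, and ordered pairs would double the answer.

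Second, with unordered pairs the bracket becomes
\[
\sum_{\sigma\in\NC(q)}c^{\#(\sigma)-1}\sum_{B\in Kr(\sigma)}\binom{|B|}{2}+\sum_{\sigma\in\NC(q)}c^{\#(\sigma)}\sum_{A\in\sigma}\binom{|A|}{2}.
\]
You need exactly that these two sums are equal. Then the $\tfrac12$ in $\alpha_{a,b}-1=\tfrac12(\E|x_{1,2}|^4-2)$ cancels the resulting factor $2$. You asserted that this equality fails, and that is the genuine gap, because the equality is true. After your reindexing $\tau=Kr(\sigma)$ it reads $\sum_\tau c^{q-\#(\tau)}F(\tau)=\sum_\tau c^{\#(\tau)}F(\tau)$, where $F(\tau)=\sum_{B\in\tau}\binom{|B|}{2}$. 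It is false term by term but holds after grouping by $\#(\tau)=k$. The reason is that $\alpha_{q,k}:=\sum_{\#(\sigma)=k}F(\sigma)=\binom{q}{k-1}\binom{q}{k+1}$ is invariant under $k\mapsto q-k$, with $\alpha_{q,0}=\alpha_{q,q}=0$. For instance, at $q=3$ both sides equal $3c^2+3c$.

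This symmetry is Lemma~\ref{lem:weighted-Kreweras-identity}. The paper proves it from Arizmendi's count $\binom{q}{k-1}\binom{q-t-1}{k-2}$ of size-$t$ blocks in $\NC_k(q)$, combined with a Vandermonde-type convolution. Without it your argument does not close. Step 3 is fine once the moment formula is established.
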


\begin{remark}
In Proposition \ref{Proposition: inf moments of covariant matrix}, observe that if $M$ is Wishart, i.e., $x_{i,j}$ is complex Gaussian then the infinitesimal cumulants $\kappa_n^\prime=c^\prime$ for all $n\geq 1$, as obtained in previous works \cite[Theorem 30]{Mingo19}.
\end{remark}





Besides Sections 1 and 2 in which we introduce the preliminaries and state our results, in Sections \ref{Section: Non crossing partitions} and \ref{Section: Inf free prob} we give a brief recap on non-crossing partitions and infinitesimal free probability respectively. In Section \ref{Section: Graph theory} we introduce the graph machinery used throughout the paper while in Section \ref{Section: Asymptotic expansion of the moments} we derive an asymptotic expression for the mixed moments in terms of graphs. We apply this framework to prove
Theorem~\ref{Lemma: Expansion of moments 4}
in Section~\ref{Section: Moments}.
Finally, Section~\ref{Section: Infinitesimal moments}
contains the proofs of
Theorem~\ref{Lemma: Expansion of moments 5}
and Proposition~\ref{Proposition: inf moments of covariant matrix}.

\section{Non crossing partitions}\label{Section: Non crossing partitions}

Given \(n\in\mathbb N\), we denote
$
[n]:=\{1,2,\ldots,n\}.
$
A \emph{partition} \(\pi\) of \([n]\) is a collection of nonempty subsets
\[
\pi=\{V_1,V_2,\ldots,V_r\},
\]
called \emph{blocks}, such that
$
V_i\cap V_j=\varnothing,$ for all $i\neq j$, and 
$
V_1\cup V_2\cup\cdots\cup V_r=[n].
$
The set of all partitions of \([n]\) is denoted by \(\cP(n)\).
We order partitions by refinement: for partitions $\pi$ and
$\sigma$ of $[n]$, we write $\pi\leq\sigma$ if every block
of $\pi$ is contained in a block of $\sigma$.
In this case, we say that $\pi$ refines $\sigma$.

A partition \(\pi\in \cP(n)\) is called \emph{non-crossing} if there do not exist
\(1\le a<b<c<d\le n\) and two distinct blocks \(V,W\in\pi\) such that
\[
a,c\in V,
\qquad
b,d\in W.
\]
The collection of all non-crossing partitions of \([n]\) is denoted by
$\NC(n).$

For example,
\[
\{\{1,4\},\{2,3\}\}\in \NC(4),
\]
whereas
\[
\{\{1,3\},\{2,4\}\}\notin \NC(4).
\]

We denote by \(\cP_2(n)\) the set of all pair partitions of \([n]\), that is, partitions whose blocks all have cardinality two. Furthermore, we define
\[
\NC_2(n):=\cP_2(n)\cap \NC(n),
\]
the set of all non-crossing pair partitions of \([n]\).
In particular,
\[
\cP_2(n)=\NC_2(n)=\varnothing
\]
whenever \(n\) is odd.



Throughout, we identify each partition \(\pi\in\cP(n)\) with the
permutation whose cycles are its blocks, with the elements of each
block listed in increasing order. We write \(\#(\pi)\) for the number
of blocks of \(\pi\), or equivalently, the number of cycles of the
associated permutation.

Let
\[
\gamma=(1\,2\,\cdots\,n)
\]
denote the full cycle on \([n]\). We shall repeatedly use the following
characterization of noncrossing partitions, due to Biane
\cite{Biane97}:
\begin{equation}\label{eq:Biane-characterization}
\#(\pi)+\#(\pi^{-1}\gamma)
\leq n+1,
\qquad \pi\in\cP(n).
\end{equation}
Moreover, equality holds in \eqref{eq:Biane-characterization} if and
only if \(\pi\in\NC(n)\).

For \(\pi\in\NC(n)\), we define its left Kreweras complement by
\[
\operatorname{Kr}(\pi):=\gamma\pi^{-1}.
\]
Under the same convention, the right Kreweras complement is given by
\[
\operatorname{Kr}_{\mathrm R}(\pi):=\pi^{-1}\gamma.
\]
The two complements are related by
\[
\operatorname{Kr}(\pi)
=
\gamma\operatorname{Kr}_{\mathrm R}(\pi)\gamma^{-1}.
\]
Since conjugation by \(\gamma\) amounts to a cyclic relabeling of
\([n]\), it preserves both noncrossingness and the number of cycles.
Consequently,
\[
\operatorname{Kr}(\pi)\in\NC(n)
\qquad\text{and}\qquad
\#\bigl(\operatorname{Kr}(\pi)\bigr)+\#(\pi)=n+1.
\]

In terms of the topology of the non-crossing partitions, the right Kreweras complement map has a deeper interpretation, see \cite[Exercise 18.25]{NS06}. Given $\pi\in \NC(n)$, consider additional numbers $\{1',\dots,n'\}$. Then, the \emph{right Kreweras complement of  $\pi$}, is the partition $\operatorname{Kr}_{\mathrm R}(\pi)\in \NC(\{1',\dots,n'\})\cong \NC(n)$ that is the largest element among those $\sigma\in\NC(\{1',\dots,n'\})$ which have the property that $\pi\cup \sigma\in \NC(\{1,1',\dots,n,n'\})$. Analogously, the \emph{left Kreweras complement of  $\pi$}, is the partition $\operatorname{Kr}(\pi)\in \{1',\dots,n'\}\cong \NC(n)$ that is the largest element among those $\sigma\in\NC(\{1',\dots,n'\})$ which have the property that $\pi\cup \sigma\in \NC(\{1',1,\dots,n',n\})$. See, for instance, a graphical representation of this in Figure \ref{fig:kreweras-left-right}.

\begin{figure}[!h]
\centering
\paperfigurefont
\begin{minipage}{0.48\linewidth}
\centering
\begin{tikzpicture}[font=\small]
  \draw[gray!35] (0,0) circle (1.2);

  \foreach \k in {1,...,4} {
    \coordinate (P\k) at ({90-90*(\k-1)}:1.2);
    \coordinate (Q\k) at ({45-90*(\k-1)}:1.2);
  }

  \draw[thick] (P1)--(P3);
  \draw[thick,dashed] (Q1)--(Q2) (Q3)--(Q4);

  \foreach \k in {1,...,4} {
    \node[nc point] at (P\k) {};
    \node[kr point] at (Q\k) {};
    \node at ({90-90*(\k-1)}:1.48) {$\k$};
    \node at ({45-90*(\k-1)}:1.48) {$\k'$};
  }

  \node at (0,-1.95) {Right complement};
\end{tikzpicture}
\end{minipage}\hfill
\begin{minipage}{0.48\linewidth}
\centering
\begin{tikzpicture}[font=\small]
  \draw[gray!35] (0,0) circle (1.2);

  \foreach \k in {1,...,4} {
    \coordinate (P\k) at ({45-90*(\k-1)}:1.2);
    \coordinate (Q\k) at ({90-90*(\k-1)}:1.2);
  }

  \draw[thick] (P1)--(P3);
  \draw[thick,dashed] (Q1)--(Q4) (Q2)--(Q3);

  \foreach \k in {1,...,4} {
    \node[nc point] at (P\k) {};
    \node[kr point] at (Q\k) {};
    \node at ({45-90*(\k-1)}:1.48) {$\k$};
    \node at ({90-90*(\k-1)}:1.48) {$\k'$};
  }

  \node at (0,-1.95) {Left complement};
\end{tikzpicture}
\end{minipage}

\caption{The right and left Kreweras complements of
$\pi=\{\{1,3\},\{2\},\{4\}\}$.
Solid chords and filled vertices represent $\pi$;
dashed chords and open vertices represent its complement.
On the primed labels,
$\operatorname{Kr}_{\mathrm R}(\pi)
=\{\{1',2'\},\{3',4'\}\}$ and
$\operatorname{Kr}(\pi)
=\{\{1',4'\},\{2',3'\}\}$.
The circular orders are read clockwise.}
\label{fig:kreweras-left-right}
\end{figure}

Historically, the right Kreweras complement has appeared in many computations involving the moments of various random matrix models. In this case, we show that the left Kreweras complement arises naturally. Thus, from now on, whenever we refer to the Kreweras complement, we mean the left Kreweras complement.

If $\pi\in \NC(\{2,4,\dots,2n\})$ is a non-crossing partition over the set of even numbers then we let $\operatorname{Kr}(\pi)$ be the Kreweras complement of $\pi$ over the set $\{1,3,\dots,2n-1\}$ which is the largest non-crossing partition such that $\pi\cup \operatorname{Kr}(\pi)\in \NC(2n)$. An example of this can be seen in Figure \ref{fig:kreweras-even-odd}.

\begin{figure}[htbp]
\centering
\paperfigurefont
\begin{tikzpicture}[font=\small]
  \draw[gray!35] (0,0) circle (1.3);

  \foreach \k in {1,...,8}
    \coordinate (P\k) at ({90-45*(\k-1)}:1.3);

  \draw[thick] (P2)--(P6);

  \draw[thick,dashed] (P1)--(P7) (P3)--(P5);

  \foreach \k in {2,4,6,8}
    \node[nc point] at (P\k) {};

  \foreach \k in {1,3,5,7}
    \node[kr point] at (P\k) {};

  \foreach \k in {1,...,8}
    \node at ({90-45*(\k-1)}:1.58) {$\k$};
\end{tikzpicture}

\caption{The even partition
$\pi=\{\{2,6\},\{4\},\{8\}\}$ (solid)
and its left Kreweras complement
$\operatorname{Kr}(\pi)=\{\{1,7\},\{3,5\}\}$ (dashed).
Their union is a non-crossing partition of $[8]$.
This is the left-hand convention of
Figure~\ref{fig:kreweras-left-right}, with
$j\mapsto 2j$ and $j'\mapsto 2j-1$.}
\label{fig:kreweras-even-odd}
\end{figure}

\section{Infinitesimal free probability}\label{Section: Inf free prob}

In this section, we briefly recall the notions of freeness and infinitesimal
freeness which will be used throughout the paper. We also fix the notation for
free cumulants and infinitesimal free cumulants.

\subsection{The notion of freeness}

A \emph{non-commutative probability space} is a pair $(\mathcal A,\varphi)$,
where $\mathcal A$ is a unital algebra over $\mathbb C$ and
$\varphi:\mathcal A\to \mathbb C$ is a unital linear functional, that is,
$\varphi(1_{\mathcal A})=1$. Elements of $\mathcal A$ are called
non-commutative random variables.

Let $a_1,\ldots,a_k\in\mathcal A$. The \emph{joint distribution} of
$(a_1,\ldots,a_k)$ is the linear functional
\[
\mu_{a_1,\ldots,a_k}:
\mathbb{C}\langle X_1,\ldots,X_k\rangle
\to
\mathbb C
\]
defined by
\[
\mu_{a_1,\ldots,a_k}(p)
=
\varphi\bigl(p(a_1,\ldots,a_k)\bigr),
\qquad
p\in\mathbb C\langle X_1,\ldots,X_k\rangle.
\]

Let $(\mathcal A,\varphi)$ be a non-commutative probability space. A family of
unital subalgebras $(\mathcal A_i)_{i\in I}$ is said to be \emph{freely independent} (or \emph{free} for short) if
$$
\varphi(a_1a_2\cdots a_m)=0
$$
whenever $a_j\in \mathcal A_{i_j}$ satisfies
$
\varphi(a_j)=0$, $1\le j\le m,$ and the indices are alternating, namely
\[
i_1\neq i_2,\quad i_2\neq i_3,\quad \ldots,\quad i_{m-1}\neq i_m.
\]
A family of random variables $(x_i)_{i\in I}\subset \mathcal A$ is called free
if the unital subalgebras generated by them are free.

In \cite{Speicher94}, Speicher introduced the notion of free cumulants and showed that they provide a combinatorial characterization of freeness, which we briefly recall below. 

The \emph{free cumulants}
$
\{\kappa_n:\mathcal A^n\to \mathbb C\}_{n\geq 1}
$
are the multi-linear functionals defined by the moment-cumulant formula
\[
\varphi(a_1a_2\cdots a_n)
=
\sum_{\pi\in NC(n)}
\kappa_\pi(a_1,\ldots,a_n),
\]
where $NC(n)$ denotes the lattice of non-crossing partitions of $[n]$, and
\[
\kappa_\pi(a_1,\ldots,a_n)
=
\prod_{V\in \pi}
\kappa_{|V|}(a_{i_1},\ldots,a_{i_{|V|}})
\]
for each block $V=\{i_1<\cdots<i_{|V|}\}$ of $\pi$.

The fundamental characterization of freeness in terms of free cumulants is the
following: a family of subalgebras $(\mathcal A_i)_{i\in I}$ is free if and only
if all mixed free cumulants vanish. More precisely,
\[
\kappa_n(a_1,\ldots,a_n)=0
\]
whenever $a_j\in \mathcal A_{i_j}$ and the variables do not all come from the
same subalgebra. Equivalently, non-vanishing free cumulants can only involve
variables from one common free component.

\subsection{The notion of infinitesimal freeness}

We now recall the infinitesimal analogue of the preceding notions. An
\emph{infinitesimal non-commutative probability space} is a triple
$(\mathcal A,\varphi,\varphi'),$
where $(\mathcal A,\varphi)$ is a non-commutative probability space and
$\varphi':\mathcal A\to\mathbb C$ is a linear functional satisfying
$
\varphi'(1_{\mathcal A})=0.
$
For $a_1,\dots,a_k\in \mathcal{A}$, the pair of linear functionals $(\mu,\mu')$ which both map $\mathbb{C}\langle X_1,\dots,X_k\rangle$ into $\mathbb{C}$ is the \textit{joint infinitesimal distribution} of $\{a_1,\dots,a_k\}$ if $\mu=\mu_{a_1,\dots,a_k}$ and
$$
\mu'(p)=\varphi'(p(a_1,\dots,a_k)).
$$
Similarly, we denote the linear functional $\mu'$ by $\mu'_{a_1,\dots,a_k}.$

A family of unital subalgebras $(\mathcal A_i)_{i\in I}$ is said to be
\emph{infinitesimally free} if it is free with respect to $\varphi$ and, in
addition, for every $n\ge1$, every choice of
$i_1,\ldots,i_n\in I$ with
$i_1\neq i_2\neq\cdots\neq i_n$, and
$a_j\in\mathcal A_{i_j}$ satisfying
$\varphi(a_j)=0$, one has
\[
\varphi'(a_1\cdots a_n)
=
\sum_{j=1}^n
\varphi'(a_j)\,
\varphi(a_1\cdots a_{j-1}a_{j+1}\cdots a_n).
\]
Similarly, a family of random variables $(x_i)_{i\in I}\subset \mathcal A$ is called infinitesimally free
if the unital subalgebras generated by them are infinitesimally free.

Motivated by the role of free cumulants in the characterization of freeness, F\'evrier and Nica introduced infinitesimal free cumulants and proved that infinitesimal freeness is equivalent to the vanishing of mixed ordinary and infinitesimal free cumulants (see \cite{FN10}). We briefly recall them below.

The \emph{infinitesimal free cumulants}
$\{\kappa_n':\mathcal A^n\to\mathbb C\}_{n\ge1}$
are the sequence of multilinear functionals determined via the following relation
\[
\varphi'(a_1\cdots a_n)
=
\sum_{\pi\in NC(n)}
\partial\kappa_\pi(a_1,\ldots,a_n),
\]
where
\[
\partial\kappa_\pi(a_1,\ldots,a_n)
=
\sum_{V\in\pi}
\kappa_{|V|}'
(a_{i_1},\ldots,a_{i_{|V|}})
\prod_{\substack{W\in\pi\\ W\neq V}}
\kappa_{|W|}
(a_{j_1},\ldots,a_{j_{|W|}}),
\]
with
\[
V=\{i_1<\cdots<i_{|V|}\},
\qquad
W=\{j_1<\cdots<j_{|W|}\}.
\]

Then F{\'e}vrier and Nica showed that a family of unital subalgebras $(\mathcal A_i)_{i\in I}$ is {infinitesimally free if and only if 
\[
\kappa_n(a_1,\ldots,a_n)=0, \quad \text{and}\quad \kappa_n'(a_1,\ldots,a_n)=0,
\]
for mixed families. 

\subsection{Random matrices and asymptotic infinitesimal freeness}

One of the most remarkable achievements of free probability is its intimate connection with random matrix theory. In this subsection, we briefly recall the aspects of this connection that are relevant to the present work.

Let $\{A_N\}_{N\ge1}$ be a sequence of $N\times N$ random matrices. We say that
$A_N$ converges in distribution to a non-commutative random variable
$a\in(\mathcal A,\varphi)$ if
\[
\lim_{N\to\infty}
\frac1N\mathbb E\circ\Tr(A_N^m)
=
\varphi(a^m),
\qquad
m\ge1.
\]
More generally, if
$A_N^{(1)},\ldots,A_N^{(r)}$
are families of random matrices, we say that they converge jointly in
distribution to
$a_1,\ldots,a_r\in(\mathcal A,\varphi)$
if
\[
\lim_{N\to\infty}
\frac1N
\mathbb E\circ\Tr
\bigl(
P(A_N^{(1)},\ldots,A_N^{(r)})
\bigr)
=
\varphi
\bigl(
P(a_1,\ldots,a_r)
\bigr)
\]
for every non-commutative polynomial $P$.


To capture the first-order correction beyond the limiting distribution, one
considers the infinitesimal distribution. Let
$(\mathcal A,\varphi,\varphi')$
be an infinitesimal non-commutative probability space. We say that
$A_N$ converges infinitesimally to
$a\in(\mathcal A,\varphi,\varphi')$
if
\[
\lim_{N\to\infty}
\frac1N\mathbb E\circ\Tr(A_N^m)
=
\varphi(a^m),
\]
and
\[
\lim_{N\to\infty}
N\left(
\frac1N\mathbb E\circ\Tr(A_N^m)
-
\varphi(a^m)
\right)
=
\varphi'(a^m),
\qquad
m\ge1.
\]
Similarly, a family
$\{A_N^{(1)},\ldots,A_N^{(r)}\}$
is said to converge jointly in infinitesimal distribution if the above
convergence holds for every non-commutative polynomial in the variables.

We say that the families
\[
A_N^{(1)},\ldots,A_N^{(r)}
\]
are \emph{asymptotically free} (resp., \emph{asymptotically infinitesimally free})
if their joint (resp., joint infinitesimal) distribution converges to a family
\[
\{a_1,\ldots,a_r\}
\]
of free random variables in a non-commutative probability space
$(\mathcal A,\varphi)$
(resp., infinitesimally free random variables in an infinitesimal
non-commutative probability space
$(\mathcal A,\varphi,\varphi')$).

\section{Graph theory}\label{Section: Graph theory}

By an unoriented graph or simply a graph we mean a pair $(V,E)$ where $V$, the vertices, is a finite non-empty set, and $E$  is the finite set of edges. Each edge, $e$, will have \textit{endpoints} $v_1, v_2 \in V$, and we shall say $e$ \textit{connects} the vertices $v_1$ and $v_2$. We allow $v_1=v_2$, in which case we say that $e$ is a \textit{loop}. In addition, we allow two edges $e$ and $f$ to have the same endpoints without being equal; this is often described by saying the graph has \textit{multiple edges}.

We shall also consider \textit{oriented} graphs; by this we mean that each edge has an origin and a terminus. For an edge $e$, we denote these by $sr(e)$ and $tr(e)$ respectively. To distinguish  between two edges connecting the same pair of vertices we frequently label the edges with the numbers $1,\dots,|E|$. We shall frequently use the same numbers to label vertices and edges; thus, sometimes we may require the notation $e_u$ to denote the edge whose label is $u$. We do not distinguish between $e_u$ and $u$; so a partition of the edges can have blocks of the form $\{i_1,\dots,i_t\}$ or $\{e_{i_1},\dots,e_{i_t}\}$, whichever is more convenient.

\begin{definition}[Quotient graph]\label{Definition: Quotient graph}
Let $G=(V,E)$ be a graph either oriented  or unoriented and let $\pi\in \cP(V)$ be a partition of $V$. The \textit{quotient graph} $G^{\pi}=(V^{\pi},E^{\pi})$ is the labelled oriented or unoriented graph obtained after identifying vertices in the same block. For $u\in V$, $[u]_{\pi}$ denotes the block of $\pi$ that contains $u$, the vertices of $G^{\pi}$ consist of the blocks of $\pi$. The edges of $G$ become the edges of $G^\pi$. Namely, if $e$ is an edge of $G$ connecting $u$ to $v$, then there will be a corresponding edge in $E^\pi$ connecting $[u]_{\pi}$ to $[v]_{\pi}$. We use the notation $e^\pi$ only when it is necessary to emphasize that we are referring to an element of $E^\pi$. 

From a partition $\pi \in \cP(V)$ we always obtain another partition $\overline{\pi} \in \cP(E)$ by saying $e \sim_{\overline{\pi}} f$ whenever $e^\pi$ and $f^\pi$ connect the same pair of vertices in $G^\pi$.
\end{definition}

\begin{definition}[Elementarization graph]
Given an oriented or unoriented graph $G=(V,E)$ its \textit{elementarization} is the unoriented graph $\overline{G}=(V,\overline{E})$ obtained from forgetting multiple edges and orientation of the edges of $G$. The vertices of $\overline{G}$ and $G$ are the same while the vertices of $\overline{E}$ consist of equivalence classes, $\overline{e}$, where $\overline{e}=\{e_1,\dots,e_n\}$ consists of all edges $e_i\in E$ joining the same pair of vertices. The \textit{multiplicity} of an edge $\overline{e}$ is the size of the equivalence class. 
\end{definition}

Since we are working with traces, throughout this paper we focus on a very particular graph: a cycle graph with an even number of edges. Throughout the remainder of this section, we restrict our attention to this graph and develop several results specific to it.

\begin{notation}\label{Notation: Graph m cycle}
Let $m$ be an even number and let $T_m=(V,E)$ be the oriented graph with set of vertices $V=[m]$ and edges $e_u=(\gamma(u),u)$ for $1\leq u\leq m$. Here $\gamma=(1,\dots,m)\in S_m$.
\end{notation}

For the rest of the paper we take $m$ to be even and $T=T_m$. Our first goal is to characterize the partitions $\pi$ for which the graph $\overline{T^\pi}$ has no edges of multiplicity $1$ and $\#(\pi)=m/2+1$.

\begin{definition}\label{Definition: Double tree}
Let $\pi\in \cP(m)$. We say that the graph $T^\pi$ is a \textit{double tree} if $\overline{T^\pi}$ is a tree with any edge of multiplicity $2$. 
\end{definition}

\begin{lemma}\label{Lemma: First characterization of pi}
Let $\pi\in \cP(m)$ be such that $\overline{T^\pi}$ has no edges of multiplicity $1$. Then
$$\#(\pi)\leq m/2+1,$$
with equality if and only if $\overline{T^\pi}$ is a double tree.
\end{lemma}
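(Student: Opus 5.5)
The plan is to count vertices and edges of the quotient graph and use the elementary fact that a connected graph has at least as many edges as vertices minus one, with equality exactly for trees.

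First, $T^\pi$ is connected, because $T$ is a cycle and hence connected, and taking a quotient by identifying vertices preserves connectedness. The same holds for its elementarization $\overline{T^\pi}$, which has the same vertex set. The vertices of $\overline{T^\pi}$ are the $\#(\pi)$ blocks of $\pi$. Let $\overline{E}$ denote its edge set.

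Second, we count edges. Each class $\overline{e}\in\overline{E}$ groups the edges of $T^\pi$ joining the same pair of vertices. By hypothesis each class has multiplicity at least $2$, and the classes partition the $m$ edges of $T^\pi$. Hence
\[
2|\overline{E}|\le \sum_{\overline{e}\in\overline{E}}|\overline{e}| = m,
\qquad\text{so}\qquad |\overline{E}|\le m/2 .
\]
Combining this with connectedness gives
\[
\#(\pi)-1\le|\overline{E}|\le m/2,
\]
which is the claimed inequality $\#(\pi)\le m/2+1$.

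Third, we analyse the equality case. If $\#(\pi)=m/2+1$, both inequalities above are equalities:
\begin{itemize}
\item $|\overline{E}|=\#(\pi)-1$ for a connected graph means $\overline{T^\pi}$ is a tree;
\item $2|\overline{E}|=m$ together with every multiplicity being at least $2$ forces every multiplicity to be exactly $2$.
\end{itemize}
So $T^\pi$ is a double tree in the sense of Definition \ref{Definition: Double tree}. Conversely, suppose $\overline{T^\pi}$ is a tree whose edges all have multiplicity $2$. Then it has $m/2$ elementary edges, and being a tree it has $m/2+1$ vertices, so $\#(\pi)=m/2+1$.

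One point needs care: loops. A loop in $T^\pi$ arises when consecutive vertices $u,\gamma(u)$ lie in the same block. Such a loop contributes an elementary edge (a loop) to $\overline{T^\pi}$. Since a tree contains no loops, the same count handles this case: loops are counted in $|\overline{E}|$, and the bound $|\overline{E}|\ge\#(\pi)-1$ still holds for connected multigraphs with loops, with equality only when the graph is a loopless tree. This bookkeeping is the only subtle step; the rest is direct counting.
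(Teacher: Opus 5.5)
Your proof is correct and is essentially the paper's argument. You use the connectivity bound $\#(\pi)-1\le|\overline{E}|$ (equality iff $\overline{T^\pi}$ is a tree) and the multiplicity bound $|\overline{E}|\le m/2$ (equality iff every multiplicity is exactly $2$), and your explicit handling of loops is extra care the paper leaves implicit.
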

\begin{proof}
On the one hand, $\overline{T^\pi}$ has $\#(\pi)$ vertices and $\#(\overline{\pi})$ edges, therefore as it is a connected graph,
\begin{equation}\label{Aux 1}
\#(\pi)-\#(\overline{\pi})\leq 1,
\end{equation}
with equality if and only if $\overline{T^\pi}$ is a tree. On the other hand, as any edge of $\overline{T^\pi}$ has multiplicity at least $2$ it follows
\begin{equation}\label{Aux 2}
\#(\overline{\pi})-m/2\leq 0,
\end{equation}
with equality if and only if any edge of $\overline{T^\pi}$ has multiplicity $2$. Combining inequalities \ref{Aux 1} and \ref{Aux 2} yields
$$\#(\pi)-m/2\leq 1,$$
with equality if and only if $\overline{T^\pi}$ is a tree with any edge of multiplicity $2$.
\end{proof}

\begin{remark}\label{Remark: Properties of double tree}
Observe that if $\overline{T^\pi}$ is a double tree, then for any pair of edges $e_u,e_v$ joining the same pair of vertices of $T^\pi$ it is satisfied that $e_u$ and $e_v$ have the opposite orientation. This follows immediately from the fact that each vertex of $\overline{T^\pi}$ has the same number of incoming and outgoing edges and $\overline{T^\pi}$ is a tree.
\end{remark}

\begin{lemma}\label{Lemma: Gamma sigma overline less or equal than pi}
Let $\pi\in \cP(m)$ and $\sigma\in \cP(m)$ be such that all the blocks of $\sigma$ have size $2$. If for any block $\{u,v\}\in \sigma$ the edges $e_u$ and $e_v$ connect the same pair of vertices of $T^{\pi}$ and with the opposite orientation, then
$$\gamma\sigma \leq \pi.$$
\end{lemma}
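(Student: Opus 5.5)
We need to show that for every $u\in[m]$, the elements $u$ and $\gamma\sigma(u)$ lie in the same block of $\pi$. Since $\gamma\sigma$ is a permutation, its cycles are the orbits of the map $u\mapsto\gamma\sigma(u)$. It therefore suffices to prove the one-step claim $u\sim_\pi \gamma(\sigma(u))$ for every $u$. Transitivity then places each whole cycle of $\gamma\sigma$ inside a single block of $\pi$, which is exactly $\gamma\sigma\leq\pi$ under our identification of partitions with permutations.

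To prove the one-step claim, fix $u$ and let $v=\sigma(u)$, so that $\{u,v\}$ is a block of $\sigma$ and $v\neq u$. By Notation \ref{Notation: Graph m cycle}, the edge $e_u$ has origin $sr(e_u)=\gamma(u)$ and terminus $tr(e_u)=u$. Similarly, $sr(e_v)=\gamma(v)$ and $tr(e_v)=v$. In the quotient graph $T^\pi$ these become edges from $[\gamma(u)]_\pi$ to $[u]_\pi$ and from $[\gamma(v)]_\pi$ to $[v]_\pi$.

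By hypothesis, $e_u$ and $e_v$ join the same pair of vertices of $T^\pi$ with opposite orientation. This means the origin of one is the terminus of the other:
\[
[\gamma(u)]_\pi=[v]_\pi \quad\text{and}\quad [u]_\pi=[\gamma(v)]_\pi .
\]
The second equality says $u\sim_\pi\gamma(v)=\gamma\sigma(u)$, which is the claim. (The first equality is the same statement with the roles of $u$ and $v$ exchanged, so it is consistent.)

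The only step needing care is interpreting ``opposite orientation'' correctly when $e_u$ and $e_v$ are loops in $T^\pi$. In that case $[\gamma(u)]_\pi=[u]_\pi$ and both edges sit at a single vertex. Since they connect the same pair of vertices, that vertex is common to both, so $[u]_\pi=[\gamma(v)]_\pi$ holds trivially. The general argument therefore goes through with no genuine obstacle. We also do not need $\sigma$ to be non-crossing or $\pi$ to have any special structure: the conclusion uses only the pairing of edges and the convention that $e_u=(\gamma(u),u)$.
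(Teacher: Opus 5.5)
Your proof is correct and follows the paper's argument. Both read off $[u]_\pi=[\gamma(v)]_\pi$ from the opposite orientations of $e_u$ (from $[\gamma(u)]_\pi$ to $[u]_\pi$) and $e_v$ (from $[\gamma(v)]_\pi$ to $[v]_\pi$), which gives $u\sim_\pi\gamma\sigma(u)$; your remarks on transitivity along the cycles of $\gamma\sigma$ and on loops only make explicit what the paper leaves implicit.
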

\begin{proof}
If $\{u,v\}$ is a block of $\sigma$ then $e_u$ goes from $[\gamma(u)]_{\pi}$ to $[u]_{\pi}$ in $T^{\pi}$. Similarly $e_v$ goes from $[\gamma(v)]_{\pi}$ to $[v]_{\pi}$. By hypothesis,
$$[u]_{\pi}=[\gamma(v)]_{\pi}.$$
Thus,
$$[\gamma\sigma(u)]_{\pi}=[\gamma(v)]_{\pi}=[u]_{\pi},$$
which proves that $u$ and $\gamma\sigma(u)$ are in the same block of $\pi$.
\end{proof}

\begin{lemma}\label{Lemma: Second characterization of pi}
Let $\pi\in \cP(m)$ be such that $\overline{T^\pi}$ has no edges of multiplicity $1$. Then
$$\#(\pi)\leq m/2+1,$$
with equality if and only if $\overline{\pi}\in \NC_2(m)$ and $\pi=\gamma\overline{\pi}$. Here $\gamma=(1,\dots,m)\in S_m$.
\end{lemma}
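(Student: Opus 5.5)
The inequality is exactly Lemma \ref{Lemma: First characterization of pi}. So the real task is to show that $T^\pi$ is a double tree if and only if $\overline{\pi}\in\NC_2(m)$ and $\pi=\gamma\overline{\pi}$. I will prove each direction separately, relying on Remark \ref{Remark: Properties of double tree}, Lemma \ref{Lemma: Gamma sigma overline less or equal than pi} and Biane's characterization \eqref{eq:Biane-characterization}.

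\emph{Forward direction.} Assume $\#(\pi)=m/2+1$, so $T^\pi$ is a double tree.
\begin{enumerate}
\item Every edge of $\overline{T^\pi}$ has multiplicity exactly $2$, so $\overline{\pi}\in\cP_2(m)$.
\item By Remark \ref{Remark: Properties of double tree}, the two edges in each block of $\overline{\pi}$ join the same pair of vertices with opposite orientations.
\item Lemma \ref{Lemma: Gamma sigma overline less or equal than pi}, applied with $\sigma=\overline{\pi}$, gives $\gamma\overline{\pi}\leq\pi$, so $\#(\gamma\overline{\pi})\geq\#(\pi)=m/2+1$.
\item Conjugation preserves cycle counts, so $\#(\gamma\overline{\pi})=\#(\overline{\pi}\gamma)$. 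Since $\overline{\pi}$ is an involution, $\overline{\pi}\gamma=\overline{\pi}^{-1}\gamma$. Biane's inequality then gives $\#(\overline{\pi}^{-1}\gamma)\leq m+1-m/2=m/2+1$.
\item Hence equality holds throughout. Biane's equality case gives $\overline{\pi}\in\NC_2(m)$.
\item We have $\gamma\overline{\pi}\leq\pi$ with the same number of blocks, which forces $\pi=\gamma\overline{\pi}$.
\end{enumerate}
One subtlety needs care: Lemma \ref{Lemma: Gamma sigma overline less or equal than pi} compares the cycles of the permutation $\gamma\sigma$ with the blocks of $\pi$, so "$\leq$" must be read as "each cycle of $\gamma\sigma$ lies inside a block of $\pi$". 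The identification of the partition $\pi$ with a permutation is then only up to the order of elements within cycles. I will state this reading explicitly.

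\emph{Reverse direction.} Assume $\overline{\pi}=\sigma\in\NC_2(m)$ and $\pi=\gamma\sigma$.
\begin{enumerate}
\item Biane's equality gives $\#(\pi)=\#(\sigma\gamma)=m+1-m/2=m/2+1$.
\item This is not circular: the target identity is a statement about the number of blocks of $\pi$, and we have just computed it directly.
\item The hypothesis of the lemma (no edges of multiplicity $1$) holds automatically, since $\overline{\pi}$ is a pairing.
\end{enumerate}

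The main obstacle is the reverse direction's consistency. The hypothesis $\overline{\pi}=\sigma$ is part of the data, so nothing more is needed formally. Still, I would sanity-check that it is compatible: for $\pi=\gamma\sigma$, each pair $\{u,v\}\in\sigma$ should give edges joining $[u]_\pi=[\gamma(v)]_\pi$ and $[v]_\pi=[\gamma(u)]_\pi$. This follows from $\gamma\sigma(u)=\gamma(v)$, which shows the paired edges are parallel and oppositely oriented.
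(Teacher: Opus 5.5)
Your proposal is correct and follows essentially the same argument as the paper: Lemma \ref{Lemma: First characterization of pi} and Remark \ref{Remark: Properties of double tree} make $\overline{\pi}$ a pairing whose paired edges are oppositely oriented. Lemma \ref{Lemma: Gamma sigma overline less or equal than pi} then gives $\gamma\overline{\pi}\leq\pi$, and Biane's inequality squeezes $m/2+1=\#(\pi)\leq\#(\gamma\overline{\pi})\leq m/2+1$ into equalities, while the converse is the direct count from Biane's equality case. Your explicit rewriting $\#(\gamma\overline{\pi})=\#(\overline{\pi}^{-1}\gamma)$, using conjugation invariance and the fact that $\overline{\pi}$ is an involution, makes precise a step the paper leaves implicit when it applies \eqref{eq:Biane-characterization} to $\gamma\overline{\pi}$.
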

\begin{proof}
The inequality was already proved in Lemma \ref{Lemma: First characterization of pi}. Let us prove the ``if and only if". If $\#(\pi)=m/2+1$ by Lemma \ref{Lemma: First characterization of pi} it follows $\overline{\pi}\in \cP_2(m)$, further as pointed out in remark \ref{Remark: Properties of double tree}; any block $\{u,v\}\in\overline{\pi}$ is such that $e_u$ and $e_v$ connect the same pair of vertices of $\overline{T^\pi}$ with the opposite orientation. Then by Lemma \ref{Lemma: Gamma sigma overline less or equal than pi} it follows that
$$\gamma\overline{\pi}\leq \pi,$$
thus
$$m+1=\#(\pi)+\#(\overline{\pi})\leq \#(\gamma\overline{\pi})+\#(\overline{\pi})\leq m+1,$$
where the first equality follows by hypothesis and the last inequality follows from inequality \ref{eq:Biane-characterization}. We conclude $\#(\pi)=\#(\gamma\overline{\pi})=m+1-\#(\overline{\pi})$ and therefore $\overline{\pi}\in \NC_2(m)$ and $\pi=\gamma\overline{\pi}$. Conversely, suppose $\overline{\pi}\in \NC_2(m)$ and $\pi=\gamma\overline{\pi}$, then
$$\#(\pi)=\#(\gamma\overline{\pi})=m+1-\#(\overline{\pi})=m/2+1.$$
\end{proof}

\begin{figure}[htbp]
\centering
\paperfigurefont
\begin{minipage}{0.43\linewidth}
\centering
\begin{tikzpicture}[font=\small]
  \draw[gray!35] (0,0) circle (1.1);

  \foreach \k in {1,...,8}
    \coordinate (P\k) at ({90-45*(\k-1)}:1.1);

  \foreach \u/\v in {1/6,2/5,3/4,7/8}
    \draw[thick] (P\u)--(P\v);

  \foreach \k in {1,...,8} {
    \node[nc point] at (P\k) {};
    \node at ({90-45*(\k-1)}:1.38) {$\k$};
  }

  \node at (0,-1.85)
    {$\overline{\pi}\in\mathcal{NC}_2(8)$};
\end{tikzpicture}
\end{minipage}\hfill
\begin{minipage}{0.53\linewidth}
\centering
\begin{tikzpicture}[font=\scriptsize]
  \node[graph vertex] (A) at (0,0) {$1,7$};
  \node[graph vertex] (B) at (1.7,0) {$2,6$};
  \node[graph vertex] (C) at (3.4,0) {$3,5$};
  \node[graph vertex] (D) at (3.4,-1.7) {$4$};
  \node[graph vertex] (E) at (0,-1.7) {$8$};

  \draw[graph edge] (B) to[bend left=20]
    node[midway,below] {$e_1$} (A);
  \draw[graph edge] (A) to[bend left=20]
    node[midway,above] {$e_6$} (B);

  \draw[graph edge] (C) to[bend left=20]
    node[midway,below] {$e_2$} (B);
  \draw[graph edge] (B) to[bend left=20]
    node[midway,above] {$e_5$} (C);

  \draw[graph edge] (D) to[bend left=20]
    node[midway,left] {$e_3$} (C);
  \draw[graph edge] (C) to[bend left=20]
    node[midway,right] {$e_4$} (D);

  \draw[graph edge] (E) to[bend left=20]
    node[midway,left] {$e_7$} (A);
  \draw[graph edge] (A) to[bend left=20]
    node[midway,right] {$e_8$} (E);

  \node[font=\small] at (1.7,-2.3) {$T^\pi$};
\end{tikzpicture}
\end{minipage}

\caption{The pairing
$\overline{\pi}
=\{\{1,6\},\{2,5\},\{3,4\},\{7,8\}\}$
and the corresponding double tree.
For $\gamma=(1\,2\,\cdots\,8)$,
$\pi=\gamma\overline{\pi}
=\{\{1,7\},\{2,6\},\{3,5\},\{4\},\{8\}\}$.
Entries inside each vertex specify its block of $\pi$.
The orientations follow
$e_u=([\gamma(u)]_\pi,[u]_\pi)$.
In particular, $\#(\pi)=5=8/2+1$.}
\label{fig:double-tree-pairing}
\end{figure}

\begin{remark}\label{Remark: Relation double tree and non-crossing pairings}
From Lemmas \ref{Lemma: First characterization of pi} and \ref{Lemma: Second characterization of pi} it is clear that there is a bijection between the set of double trees and the set of non-crossing pairings. Such a bijection is given by the relation $\pi=\gamma\overline{\pi}$ where $\overline{\pi}\in \NC_2(m)$. An explicit example of this can be seen in Figure \ref{fig:double-tree-pairing}.
\end{remark}

So far, we have seen several results that characterize the case when $\#(\pi)=m/2+1$, now we turn our attention to the case when $\#(\pi)=m/2$. At this point, recall that a connected graph with a unique cycle is a graph with the same number of vertices and edges. Equivalently, the graph has a unique cycle and removing any edge of this cycle results in a tree.

\begin{definition}\label{Definition: Double uniciclyc and 2-4 tree type}
Let $\pi\in \cP(m)$.
\begin{enumerate}
    \item We say that the graph $T^\pi$ is a \textit{2-4 tree} if $\overline{T^\pi}$ is a tree with any edge of multiplicity $2$ except for one edge that has multiplicity $4$.
    \item We say that the graph $T^\pi$ is a \textit{double unicycle} if $\overline{T^\pi}$ is a graph with a unique cycle with any edge of multiplicity $2$.
\end{enumerate}
Examples of 2-4 tree and double unicycle are given in Figure \ref{fig:two-four-tree-unicycle}.
\end{definition}

\begin{figure}[htbp]
\centering
\paperfigurefont
\begin{minipage}{0.48\linewidth}
\centering
\begin{tikzpicture}
  \node[graph vertex] (A) at (0,0) {$A$};
  \node[graph vertex] (B) at (1.3,0) {$B$};
  \node[graph vertex] (C) at (2.6,0) {$C$};
  \node[graph vertex] (D) at (3.9,0) {$D$};

  \foreach \angle in {15,45} {
    \draw[graph edge] (A)
      to[bend left=\angle] (B);
    \draw[graph edge] (B)
      to[bend left=\angle] (A);
  }

  \foreach \u/\v in {B/C,C/D} {
    \draw[graph edge] (\u)
      to[bend left=20] (\v);
    \draw[graph edge] (\v)
      to[bend left=20] (\u);
  }

  \node[font=\small] at (1.95,-0.9)
    {A $2$--$4$ tree};
\end{tikzpicture}

\smallskip
{\small
$\begin{gathered}
 A=\{1,3\},\quad B=\{2,4,8\},\\
 C=\{5,7\},\quad D=\{6\}.
\end{gathered}$
}
\end{minipage}\hfill
\begin{minipage}{0.48\linewidth}
\centering
\begin{tikzpicture}
  \node[graph vertex] (A) at (0,1.6) {$A$};
  \node[graph vertex] (B) at (1.9,1.6) {$B$};
  \node[graph vertex] (C) at (1.9,0) {$C$};
  \node[graph vertex] (D) at (0,0) {$D$};

  \foreach \u/\v in {A/B,B/C,C/D,D/A} {
    \draw[graph edge] (\u)
      to[bend left=15] (\v);
    \draw[graph edge] (\v)
      to[bend left=15] (\u);
  }

  \node[font=\small] at (0.95,-0.9)
    {A double unicycle};
\end{tikzpicture}

\smallskip
{\small
$\begin{gathered}
 A=\{1,5\},\quad B=\{2,8\},\\
 C=\{3,7\},\quad D=\{4,6\}.
\end{gathered}$
}
\end{minipage}

\caption{Two quotients $T^\pi$ of $T_8$, with
$\pi=\{A,B,C,D\}$ as specified in each panel.
On the left, the underlying simple graph is a tree:
the edge $AB$ has multiplicity four and the other edges
have multiplicity two.
On the right, the underlying simple graph has a unique cycle
and every edge has multiplicity two.
Both examples satisfy $\#(\pi)=4=8/2$.}
\label{fig:two-four-tree-unicycle}
\end{figure}

\begin{proposition}\label{Proposition: Characterization of pi=m/2}
Let $\pi\in \cP(m)$ be such that $\overline{T^\pi}$ has no edges of multiplicity $1$. Then $\#(\pi)=m/2$ if and only if $T^\pi$ is either a 2-4 tree or a double unicycle.
\end{proposition}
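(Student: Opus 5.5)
We follow the counting scheme from the proof of Lemma~\ref{Lemma: First characterization of pi}. The graph $\overline{T^\pi}$ is connected, since it is a quotient of the cycle $T$. It has $\#(\pi)$ vertices and $\#(\overline{\pi})$ edges. Write
\[
\#(\pi)-\frac m2=\Bigl(\#(\pi)-\#(\overline{\pi})\Bigr)+\Bigl(\#(\overline{\pi})-\frac m2\Bigr).
\]
We bound each bracket separately. The first bracket is at most $1$, with equality iff $\overline{T^\pi}$ is a tree. It equals $0$ iff $\overline{T^\pi}$ has exactly one cycle, because a connected graph with equally many vertices and edges is unicyclic. It is at most $-1$ otherwise.

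For the second bracket, note that the multiplicities of the edges of $\overline{T^\pi}$ sum to $m$ and each is at least $2$. Hence $2\#(\overline{\pi})\le m$, so the second bracket is at most $0$, with equality iff all multiplicities equal $2$.

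Next we show that when $\overline{T^\pi}$ is a tree, the second bracket can never equal $-\tfrac12$. The key extra ingredient is that every edge of a tree quotient has even multiplicity. To see this, fix an edge $\overline e$ of $\overline{T^\pi}$. Removing it disconnects the tree into two components. The closed walk $e_m,e_{m-1},\dots,e_1$ in $T^\pi$, obtained from the cycle $T$, must cross $\overline e$ an even number of times to return to its starting component. So every multiplicity is even, and $m-2\#(\overline{\pi})$, which equals the sum of (multiplicity $-2$) over the edges, is a sum of nonnegative even numbers. Consequently, for a tree, the second bracket is either $0$ or at most $-1$. It equals exactly $-1$ iff precisely one edge has multiplicity $4$ and all the others have multiplicity $2$.

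Now suppose $\#(\pi)=m/2$, so the two brackets sum to $0$. Both brackets are integers: the first obviously, and the second because $m$ is even. The first bracket lies in $\{\dots,-1,0,1\}$ and the second is at most $0$, so only two cases are possible. In case (a), the first bracket is $1$ and the second is $-1$; by the tree analysis this means $\overline{T^\pi}$ is a tree with one edge of multiplicity $4$ and the rest of multiplicity $2$, i.e.\ a 2-4 tree. In case (b), both brackets are $0$; then $\overline{T^\pi}$ is unicyclic with all multiplicities equal to $2$, i.e.\ a double unicycle.

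Conversely, a 2-4 tree has $\#(\pi)=\#(\overline{\pi})+1$ and $\#(\overline{\pi})=(m-4)/2+1=m/2-1$, so $\#(\pi)=m/2$. A double unicycle has $\#(\pi)=\#(\overline{\pi})=m/2$. This completes the argument.

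The main obstacle is excluding the tree case with an edge of multiplicity $3$ (or otherwise odd multiplicity). Integrality alone handles this as well, since $\#(\overline{\pi})$ is an integer. However, a tree with total multiplicity excess $2$ could a priori have two edges of multiplicity $3$. The parity-of-crossings argument above is what rules this out, and it is the step I would check most carefully.
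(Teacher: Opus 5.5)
Your proof is correct and uses the same decomposition as the paper, $\#(\pi)-\tfrac m2=\bigl(\#(\pi)-\#(\overline{\pi})\bigr)+\bigl(\#(\overline{\pi})-\tfrac m2\bigr)$, with the same two cases $(1,-1)$ and $(0,0)$. The paper simply asserts that the case $(1,-1)$ gives a 2-4 tree. Your parity-of-crossings argument, that every edge of a tree quotient has even multiplicity and so two edges of multiplicity $3$ are impossible, supplies a step the paper leaves implicit.
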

\begin{proof}
It is satisfied that 
$$\#(\pi)-\#(\overline{\pi})\leq 1,$$
and
$$\#(\overline{\pi})-m/2\leq 0.$$
So $\#(\pi)=m/2$ if and only if either $\#(\pi)-\#(\overline{\pi})=1$ and $\#(\overline{\pi})-m/2=-1$ or
$\#(\pi)-\#(\overline{\pi})=0$ and $\#(\overline{\pi})-m/2=0$. In the first case $T^\pi$ is a 2-4 tree while in the second case $T^\pi$ is a double unicycle. 
\end{proof}

\begin{remark}\label{Remark: Properties of double unicycle and 2-4 tree}
As pointed out in Remark \ref{Remark: Properties of double tree}, if $T^\pi$ is a 2-4 tree then any two edges joining the same pair of vertices have the opposite orientation except for the four edges $e_u,e_v,e_w,e_x$ that join the same pair of vertices. In this case, it must be that two of them have the same orientation, say $e_u,e_w$ and opposite to the orientation of the other two edges, $e_v$ and $e_x$. Further, let $a$ and $b$ be the two vertices of $T^\pi$ such that $e_u,e_v,e_w,e_x$ join the vertices $a$ and $b$. Starting from $a$, we must be able to traverse the entire graph $T^\pi$ by following the edges of the cycle $T$. Suppose $e_u=(a,b)$ with $u$ even, Since $T^\pi$ is a double tree, every time we traverse an edge, we must traverse the same edge again in the opposite direction, so either $v$ or $x$ are odd and therefore $w$ must be even. Thus, $u,w$ are both even while $v,x$ are both odd. If $T^\pi$ is a double unicycle then any two edges $e_u$ and $e_v$ joining the same pair of vertices are in the following way: if $\overline{e_u}$ is not in the cycle of $\overline{T^\pi}$ then $e_u$ and $e_v$ have the opposite orientation. If $\overline{e_u}$ is in the cycle of $\overline{T^\pi}$ then they might have or not the opposite orientation. If they have the opposite orientation any other two edges joining the same pair of vertices and within the cycle must also have the opposite orientation. In this case we say $T^\pi$ is a \textit{double unicycle of first type}. If $e_u$ and $e_v$ have the same orientation then any other two edges joining the same pair of vertices within the cycle must also have the same orientation. In this case we say $T^\pi$ is a \textit{double unicycle of second type}. Example of these graphs can be seen in Figure \ref{fig:double-unicycle-types}. The last fact is a direct consequence that any vertex of $T^\pi$ must have the same number of incoming and outgoing edges.
\end{remark}

\begin{figure}[htbp]
\centering
\paperfigurefont
\begin{minipage}{0.48\linewidth}
\centering
\begin{tikzpicture}
  \node[graph vertex] (A) at (0,1.6) {$A$};
  \node[graph vertex] (B) at (1.8,1.6) {$B$};
  \node[graph vertex] (C) at (1.8,0) {$C$};
  \node[graph vertex] (D) at (0,0) {$D$};
  \node[graph vertex] (E) at (-1.5,1.6) {$E$};

  \foreach \u/\v in {A/B,B/C,C/D,D/A,A/E} {
    \draw[graph edge] (\u)
      to[bend left=15] (\v);
    \draw[graph edge] (\v)
      to[bend left=15] (\u);
  }

  \node[font=\small] at (0.15,-0.75)
    {First type};
\end{tikzpicture}
\end{minipage}\hfill
\begin{minipage}{0.48\linewidth}
\centering
\begin{tikzpicture}
  \node[graph vertex] (A) at (0,1.6) {$A$};
  \node[graph vertex] (B) at (1.8,1.6) {$B$};
  \node[graph vertex] (C) at (1.8,0) {$C$};
  \node[graph vertex] (D) at (0,0) {$D$};
  \node[graph vertex] (E) at (-1.5,1.6) {$E$};

  \foreach \u/\v in {B/A,C/B,D/C,A/D} {
    \draw[graph edge] (\u)
      to[bend left=15] (\v);
    \draw[graph edge] (\u)
      to[bend right=15] (\v);
  }

  \draw[graph edge] (A) to[bend left=15] (E);
  \draw[graph edge] (E) to[bend left=15] (A);

  \node[font=\small] at (0.15,-0.75)
    {Second type};
\end{tikzpicture}
\end{minipage}

\caption{Double unicycles of the first and second types.
For the first type, the two edges over every cycle edge
have opposite orientations.
For the second type, they have the same orientation
and follow a consistent direction around the cycle.
The two edges over the off-cycle edge $AE$ have opposite
orientations in both cases.
Each graph has ten directed edges.}
\label{fig:double-unicycle-types}
\end{figure}

Note that if $T^\pi$ is a double tree and we merge two vertices of $T^\pi$ that are at distance $2$, the resulting graph is a 2-4 tree. We would like to prove that this is actually the only way we can get a 2-4 tree.

\begin{notation}\label{Notation: New pi of 2-4 tree}
Let $\pi\in \cP(m)$ be such that $T^\pi$ is a 2-4 tree. Let 
$$B=\{e_u,e_v,e_w,e_x\}$$ 
be the edge of $\overline{T^\pi}$ of multiplicity $4$ with $e_u,e_w$ having the same orientation and opposite to $e_v,e_x$. Let $\overline{\pi}^\prime\in \cP_2(m)$ be the pairing whose blocks are the blocks of $\overline{\pi}$ except for $B$ which turns into the blocks $\{u,v\}$ and $\{w,x\}$ of $\overline{\pi}^\prime$.
\end{notation}

\begin{remark}
Observe that the choice of $\overline{\pi}^\prime$ in Notation \ref{Notation: New pi of 2-4 tree} is not unique. The blocks of size $2$ of $\overline{\pi}$ determine a unique block of $\overline{\pi}^\prime$. However, the block of size $4$, $\{u,v,w,x\}$, of $\overline{\pi}$ turns into the blocks $\{u,v\}$ and $\{w,x\}$ of $\overline{\pi}^\prime$. This choice is not unique, for instance $\{u,x\}$ and $\{v,w\}$ is another possible choice. These are the only two possible choices that satisfy that if $\{a,b\}$ is a block of $\overline{\pi}^\prime$ then $e_a$ and $e_b$ join the same pair of vertices of $T^\pi$ with the opposite orientation.
\end{remark}

\begin{proposition}\label{Proposition: pi prime is non-crossing}
Let $\pi\in \cP(m)$ be such that $T^\pi$ is a 2-4 tree and let $\overline{\pi}^\prime$ be as in Notation \ref{Notation: New pi of 2-4 tree}. Then $\overline{\pi}^\prime\in \NC_2(m)$.
\end{proposition}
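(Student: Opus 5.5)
The plan is to follow the proof of Lemma \ref{Lemma: Second characterization of pi}, adding one parity observation. The cycle count alone only gives a bound one short of what we need, and parity closes that gap.

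First, check that the pairing $\sigma:=\overline{\pi}^\prime$ satisfies the hypothesis of Lemma \ref{Lemma: Gamma sigma overline less or equal than pi}. A block $\{a,b\}$ of $\sigma$ that is also a block of $\overline{\pi}$ corresponds to an edge of $\overline{T^\pi}$ of multiplicity $2$. By Remark \ref{Remark: Properties of double unicycle and 2-4 tree}, $e_a$ and $e_b$ join the same pair of vertices with opposite orientations. For the two remaining blocks $\{u,v\}$ and $\{w,x\}$, Notation \ref{Notation: New pi of 2-4 tree} has $e_u,e_w$ with the same orientation and opposite to $e_v,e_x$. So each of these blocks also pairs two edges between the same vertices $a,b$ with opposite orientations. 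Lemma \ref{Lemma: Gamma sigma overline less or equal than pi} then gives $\gamma\sigma\leq\pi$, and hence
\[
\#(\gamma\sigma)\geq \#(\pi)=m/2,
\]
where the equality holds because $T^\pi$ is a 2-4 tree (Proposition \ref{Proposition: Characterization of pi=m/2}).

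Next, compare with Biane's inequality \eqref{eq:Biane-characterization}. Since $\sigma$ is a pairing, $\sigma^{-1}=\sigma$, so $\sigma^{-1}\gamma=\sigma\gamma=\gamma^{-1}(\gamma\sigma)\gamma$. Thus $\#(\sigma^{-1}\gamma)=\#(\gamma\sigma)$, and Biane's inequality gives
\[
\#(\gamma\sigma)\leq m+1-\#(\sigma)=m/2+1.
\]
This by itself does not force $\sigma\in\NC(m)$, because $\#(\gamma\sigma)$ might equal $m/2$ rather than $m/2+1$.

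The key step, and the only real obstacle, is the parity argument. A permutation of $[m]$ with $k$ cycles has sign $(-1)^{m-k}$. Multiplicativity of the sign, $\operatorname{sgn}(\gamma\sigma)=\operatorname{sgn}(\gamma)\operatorname{sgn}(\sigma)$, gives
\[
m-\#(\gamma\sigma)\equiv (m-1)+(m-\#(\sigma)) \pmod 2.
\]
Substituting $\#(\sigma)=m/2$, this says $\#(\gamma\sigma)\equiv m/2+1 \pmod 2$. This is the usual statement that the genus defect $m+1-\#(\sigma)-\#(\sigma^{-1}\gamma)$ is even.

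Combining the three facts: $\#(\gamma\sigma)$ lies in $\{m/2,\,m/2+1\}$ and has the parity of $m/2+1$, so $\#(\gamma\sigma)=m/2+1$. Then $\#(\sigma)+\#(\sigma^{-1}\gamma)=m+1$, which is equality in \eqref{eq:Biane-characterization}. Hence $\sigma\in\NC(m)$, and since $\sigma$ is a pairing, $\overline{\pi}^\prime\in\NC_2(m)$.
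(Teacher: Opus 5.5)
Your proof is correct, but it takes a different route from the paper.

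The paper argues by induction on $m$. It deletes a leaf $\{u\}$ of $\overline{T^\pi}$ together with the two edges $e_{u-1},e_u$, observes that what remains is again a 2-4 tree, and applies the induction hypothesis. It then notes that re-inserting the adjacent pair $\{u-1,u\}$ cannot create a crossing.

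You instead run the argument of Lemma~\ref{Lemma: Second characterization of pi} directly: $\gamma\sigma\leq\pi$ from Lemma~\ref{Lemma: Gamma sigma overline less or equal than pi}, then Biane's inequality \eqref{eq:Biane-characterization}. You close the one-unit gap between $\#(\pi)=m/2$ and the required $m/2+1$ by the evenness of the defect $m+1-\#(\sigma)-\#(\sigma^{-1}\gamma)$.

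Your version buys three things:
\begin{itemize}
\item It is shorter and global rather than inductive.
\item It avoids a point the induction passes over tacitly. A leaf of $\overline{T^\pi}$ can be incident to the multiplicity-four edge; its block then has size two and four edges attached, not the form $\{u\}$ with only $e_{u-1},e_u$. So the inductive step must choose a leaf away from that edge.
\item It proves Proposition~\ref{Proposition: pi is non-crossing} verbatim. Take $\sigma=\overline{\pi}$, whose blocks already pair oppositely oriented edges, with $\#(\pi)=m/2$. This removes the need to treat the leafless case separately.
\end{itemize}

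What the paper's leaf-peeling gives in exchange is a constructive picture of how a 2-4 tree is assembled from smaller ones. That structure is not needed for the statement itself.
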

\begin{proof}
We prove this by induction on $m$. The base case is $m=4$. The only graph $T^\pi$ that is a 2-4 tree is given by $\pi=\{1,3\}\{2,4\}$ that has the edges $e_1,e_2,e_3,e_4$ joining the same pair of vertices with $e_1,e_3$ having the same orientation and opposite to $e_2,e_4$. Thus, $\overline{\pi}^\prime$ is either $(1,4)(2,3)$ or $(1,2)(3,4)$. In either case $\overline{\pi}^\prime\in \NC_2(4)$. Let us assume it is true for $m$ and prove it for $m+2$. Since $\overline{T^\pi}$ is a tree it has a leaf $B$. The vertex $B$ of the graph $T^\pi$ must be adjacent to exactly two edges: $e_{u}$ and $e_{u-1}$, so that $B$ is of the form $B=\{u\}$. Let $A$ be the other vertex adjacent to the edge $e_{u}$, so $u+1,u-1\in A$. Let $\hat{\pi}\in \cP(\{1,\dots,u-1,u+2,\dots,m+2\})$ be the partition whose blocks are the blocks of $\pi$ except $B$ which is removed and $A$ which now becomes $\hat{A}=A\setminus \{u+1\}$. Let $\hat{T}$ be the cycle obtained from removing $e_{u-1}$ and $e_u$ of $T$ and merging the vertices $u-1$ and $u+1$ which now identify as the vertex $u-1$. Note that $\hat{T}^{\hat{\pi}}$ is the graph obtained from removing the vertex $B$ of $T^\pi$ and the edges $e_{u-1}$ and $e_u$. Thus $\hat{T}^{\hat{\pi}}$ is a 2-4 tree and hence by induction hypothesis $\overline{\hat{\pi}}^\prime$ is non-crossing pairing. However this partition is precisely $\overline{\pi}^\prime \setminus \{u-1,u\}$, thus $\overline{\pi}^\prime$ must be non-crossing pairing.
\end{proof}

The following lemma characterizes a 2-4 tree as it proves the only way to get a 2-4 tree is merging two vertices (that are at distance 2) of a double tree.

\begin{lemma}\label{Lemma: Characterization of 2-4 tree}
Let $\pi\in \cP(m)$. Then $T^\pi$ is a 2-4 tree if and only if $T^\pi$ is obtained by identifying two vertices of $T^\sigma$ that are at distance $2$, where $T^\sigma$ is a double tree.
\end{lemma}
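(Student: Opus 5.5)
We prove the two implications separately. In both directions the key idea is the splitting $\overline{\pi}'$ of Notation \ref{Notation: New pi of 2-4 tree}, which turns a 2-4 tree back into a double tree.

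\emph{Direction ``if''.} Let $T^\sigma$ be a double tree and let $A,C$ be two of its vertices with $d_{T^\sigma}(A,C)=2$. Let $\pi$ be the partition obtained from $\sigma$ by merging the blocks $A$ and $C$.

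In $\overline{T^\sigma}$ there is a unique path $A - D - C$. Identifying $A$ with $C$ turns the two simple edges $AD$ and $CD$ into a single simple edge of $\overline{T^\pi}$, whose multiplicity is $2+2=4$.

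We check that no other edges are identified. Two other edges could only merge if they had endpoints $\{A,X\}$ and $\{C,X\}$ for some vertex $X$. Such an $X$ would be a common neighbour of $A$ and $C$, and since $\overline{T^\sigma}$ is a tree, $X=D$ is the only one.

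Next, $\overline{T^\pi}$ is still a tree. Identifying two vertices of a tree creates exactly one cycle, here $A - D - C - A$. That cycle collapses because the edges $AD$ and $CD$ become the same simple edge, and no loop is created since $A\neq C$ are not adjacent. Concretely, $\overline{T^\pi}$ has $\#(\sigma)-1$ vertices and $\#(\overline\sigma)-1$ edges and is connected.

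All remaining edges keep multiplicity $2$. Hence $T^\pi$ is a 2-4 tree.

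\emph{Direction ``only if''.} Let $T^\pi$ be a 2-4 tree and let $\{e_u,e_v,e_w,e_x\}$ be the edge of multiplicity $4$, joining vertices $a,b$, with orientations as in Remark \ref{Remark: Properties of double unicycle and 2-4 tree}. Form $\overline{\pi}'$ as in Notation \ref{Notation: New pi of 2-4 tree}. By Proposition \ref{Proposition: pi prime is non-crossing}, $\overline{\pi}'\in\NC_2(m)$.

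Set $\sigma:=\gamma\overline{\pi}'$. By Lemma \ref{Lemma: Second characterization of pi} and Remark \ref{Remark: Relation double tree and non-crossing pairings}, $T^\sigma$ is a double tree with $\#(\sigma)=m/2+1=\#(\pi)+1$.

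Each block $\{s,t\}$ of $\overline{\pi}'$ consists of edges joining the same pair of vertices of $T^\pi$ with opposite orientations. Lemma \ref{Lemma: Gamma sigma overline less or equal than pi} therefore gives $\sigma=\gamma\overline{\pi}'\leq\pi$. Since the block counts differ by one, $\pi$ is obtained from $\sigma$ by merging exactly two blocks $A,C$ of $\sigma$.

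It remains to show $d_{T^\sigma}(A,C)=2$. In $T^\sigma$ the blocks $\{u,v\}$ and $\{w,x\}$ of $\overline{\pi}'$ are two distinct simple edges, while the other blocks of $\overline{\pi}'$ correspond bijectively to the edges of $\overline{T^\pi}$. So merging $A$ and $C$ must identify exactly these two simple edges of $T^\sigma$ and nothing else. Both of these edges have endpoints in $\{a,b\}$ after the merge, and the merge fuses only $A$ and $C$.

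The main obstacle is to exclude the bad configurations and pin down the shape. The cases to rule out are:
\begin{itemize}
\item $A$ and $C$ adjacent in $T^\sigma$: then merging would produce a loop, but $T^\pi$ has none, since a loop would require $a=b$.
\item The two simple edges not sharing an endpoint: they would have the form $\{A,P\}$ and $\{C,Q\}$ with $P\neq Q$, $P,Q\notin\{A,C\}$, and after merging they would join different pairs of vertices, contradiction.
\end{itemize}
Hence the two edges are $\{A,D\}$ and $\{C,D\}$ for a common vertex $D$. Since $A,C$ are not adjacent, $d_{T^\sigma}(A,C)=2$, which completes the proof.
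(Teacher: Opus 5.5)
Your proof is correct and follows the same argument as the paper. You split the multiplicity-four block via $\overline{\pi}'$, use Proposition~\ref{Proposition: pi prime is non-crossing} to see that $\sigma=\gamma\overline{\pi}'$ gives a double tree with $\sigma\leq\pi$ by Lemma~\ref{Lemma: Gamma sigma overline less or equal than pi}, and conclude from $\#(\sigma)=\#(\pi)+1$ that $\pi$ merges two blocks of $\sigma$. You go further than the paper by writing out the converse and the distance-two step, which it only asserts. Your case list in that step skips two trivial cases: an identified edge not meeting $\{A,C\}$ at all, and both edges meeting at $A$. Both are ruled out by the same reasoning you give.
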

\begin{proof}
The converse is clear so let us prove that if $T^\pi$ is a 2-4 tree, then $T^\pi$ results of joining two vertices of $T^\sigma$ that are at distance $2$, where $T^\sigma$ is a double tree. Let $\sigma=\gamma\overline{\sigma}$ with $\overline{\sigma}=\overline{\pi}^\prime\in \NC_2(m)$ (thanks to Proposition \ref{Proposition: pi prime is non-crossing}). By Lemmas \ref{Lemma: First characterization of pi} and \ref{Lemma: Second characterization of pi} we know $T^\sigma$ is a double tree. Since $\overline{\sigma}\in \NC_2(m)$
$$\#(\sigma)+\#(\overline{\sigma})=\#(\gamma\overline{\sigma})+\#(\overline{\sigma})=m+1.$$
So we conclude $\#(\sigma)=m/2+1=\#(\pi)+1$. However, Lemma \ref{Lemma: Gamma sigma overline less or equal than pi} says
$$\sigma \leq \pi,$$
thus $\pi$ must be the result of merging two blocks of $\sigma$. These blocks must correspond to vertices of $T^\sigma$ that are at distance 2 so that the resulting graph $T^\pi$ is a 2-4 tree.
\end{proof}

Similarly as before, if $T^\pi$ is a double tree and now we instead merge two vertices of $T^\pi$ that are at distance distinct from $2$, the resulting graph is a double unicycle of first type. We would like to prove that this is the only way we can get double unicycle of first type.

\begin{proposition}\label{Proposition: pi is non-crossing}
Let $\pi\in \cP(m)$ be such that $T^\pi$ is a double unicycle of first type. Then $\overline{\pi}\in \NC_2(m)$.
\end{proposition}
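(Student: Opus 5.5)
We argue by induction on $m$, in the same way as in the proof of Proposition \ref{Proposition: pi prime is non-crossing}. The idea is to peel off leaves of $\overline{T^\pi}$ until only the cycle is left. The first step is to check that $\overline{\pi}\in\cP_2(m)$ and that every block $\{u,v\}$ of $\overline{\pi}$ consists of two edges joining the same pair of vertices with opposite orientation. Since $T^\pi$ is a double unicycle, every edge of $\overline{T^\pi}$ has multiplicity $2$. Remark \ref{Remark: Properties of double unicycle and 2-4 tree} gives the opposite orientations: off the cycle this is automatic, and on the cycle it is exactly the first-type assumption. With this in hand, Lemma \ref{Lemma: Gamma sigma overline less or equal than pi} gives $\gamma\overline{\pi}\le\pi$.

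\textbf{Direct counting approach.} It may be possible to avoid induction altogether. By Proposition \ref{Proposition: Characterization of pi=m/2} we have $\#(\pi)=m/2$, while Biane's inequality \eqref{eq:Biane-characterization} gives $\#(\gamma\overline{\pi})\le m+1-m/2=m/2+1$. So either $\#(\gamma\overline\pi)=m/2+1$, in which case $\overline{\pi}$ is non-crossing, or $\#(\gamma\overline\pi)\le m/2$. In the second case $\gamma\overline\pi\le\pi$ together with the count forces $\gamma\overline{\pi}=\pi$ when $\#(\gamma\overline\pi)=m/2$. To rule out the second case we need a parity or genus argument: $\#(\gamma\overline\pi)+\#(\overline\pi)$ has the same parity as $m+1$ modulo $2$ (Euler characteristic), so $\#(\gamma\overline\pi)\in\{m/2+1,m/2-1,\dots\}$. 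Hence $\#(\gamma\overline\pi)=m/2$ is impossible. If $\#(\gamma\overline{\pi})\le m/2-1$, then $\gamma\overline\pi\le\pi$ with strictly fewer blocks, so some block of $\pi$ splits. We then need to show this contradicts the structure, and this is where the induction is needed.

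\textbf{Inductive approach.} For the base case we take the smallest double unicycle of first type, a cycle of length $3$ in $\overline{T^\pi}$. Two parallel edges on a $2$-cycle would give a multiplicity-$4$ edge, which is excluded. So the base case is $m=6$ with a triangle, for example $\pi=\{\{1,4\},\{2,5\},\{3,6\}\}$. One checks directly that its pairing $\overline{\pi}$ is non-crossing. In the inductive step, if $\overline{T^\pi}$ has a leaf, we remove it exactly as in Proposition \ref{Proposition: pi prime is non-crossing}. The leaf has the form $B=\{u\}$ with edges $e_{u-1},e_u$, and deleting them while merging $u-1$ with $u+1$ leaves a smaller double unicycle of first type. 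The induction hypothesis then applies, and adding back the pair $\{u-1,u\}$, which consists of adjacent points, preserves non-crossingness.

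\textbf{Main obstacle.} The hard step is the case with no leaves, where $\overline{T^\pi}$ is a pure cycle $C_k$ with $m=2k$. Here we show that the walk $T$ traversing a doubled $k$-cycle with opposite orientations on each pair must go once around in one direction and then back in the reverse direction. Indeed, each vertex has degree $4$ in $T^\pi$, and first-type orientations force the walk never to "turn back" except at one point. The resulting pairing is $\{\{i,2k+1-i\}\}$ up to rotation, which is non-crossing. To prove this, we would track the walk from a vertex and use that the edges into and out of each vertex are paired in a balanced way.
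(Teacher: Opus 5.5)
\textbf{The gap.} As written, neither of your two routes is closed, and you abandon the first one because of an inverted inequality.

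In the direct approach you correctly establish the following:
\begin{itemize}
\item $\overline{\pi}\in\cP_2(m)$, with opposite orientations in each block, so $\gamma\overline{\pi}\le\pi$ by Lemma~\ref{Lemma: Gamma sigma overline less or equal than pi};
\item $\#(\pi)=m/2$;
\item $\#(\gamma\overline{\pi})\le m/2+1$ by \eqref{eq:Biane-characterization};
\item $\#(\gamma\overline{\pi})\equiv m/2+1 \pmod 2$, which follows from $\mathrm{sgn}(\gamma\overline{\pi})=\mathrm{sgn}(\gamma)\,\mathrm{sgn}(\overline{\pi})=(-1)^{m-1+m/2}$.
\end{itemize}
You then treat $\#(\gamma\overline{\pi})\le m/2-1$ as a remaining case in which ``some block of $\pi$ splits'', and say induction is needed there. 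This reads the order backwards. The relation $\gamma\overline{\pi}\le\pi$ means that $\gamma\overline{\pi}$ \emph{refines} $\pi$, so $\#(\gamma\overline{\pi})\ge\#(\pi)=m/2$. The paper uses exactly this in the proof of Lemma~\ref{Lemma: Second characterization of pi}. Hence $m/2\le\#(\gamma\overline{\pi})\le m/2+1$, and parity excludes $m/2$. So equality holds in \eqref{eq:Biane-characterization}, and $\overline{\pi}\in\NC_2(m)$.

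With this one-line fix your argument is complete, and it is genuinely different from the paper's. It needs only Lemma~\ref{Lemma: Gamma sigma overline less or equal than pi}, Biane's inequality and a sign count, with no walk analysis and no induction on leaves. It also gives $\#(\gamma\overline{\pi})=\#(\pi)+1$ directly, which is the key input to Lemma~\ref{Lemma: Characterization of double unicycle first type}.

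\textbf{The inductive fallback.} The induction you fall back on does not close either.
\begin{itemize}
\item Your base-case example $\pi=\{\{1,4\},\{2,5\},\{3,6\}\}$ is a double unicycle of the \emph{second} type: $e_1$ and $e_4$ both go from $\{2,5\}$ to $\{1,4\}$. Its $\overline{\pi}=\{\{1,4\},\{2,5\},\{3,6\}\}$ is crossing. A first-type example is $\pi=\{\{1,4\},\{2,6\},\{3,5\}\}$, with $\overline{\pi}=\{\{1,6\},\{2,5\},\{3,4\}\}$.
\item More importantly, the leafless case, which contains the base case, is only asserted. The description ``never turns back except at one point'' is also inaccurate, since the walk reverses at two points.
\end{itemize}
The once-around-and-back claim is true, but you do not prove it. One way: lift the walk to $\mathbb{Z}$, where it becomes a closed walk crossing each edge of an interval of length exactly $k$ once in each direction.

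The paper avoids this analysis. It picks consecutive edges $e_s,e_{s-1}$ with $[s+1]_\pi=[s-1]_\pi$. These exist, since otherwise the walk goes around the cycle in one direction and every cycle edge is traversed twice in the same direction. It then splits $\{s\}$ off its block to obtain a double tree with the same edge pairing $\overline{\pi}$, and applies Lemma~\ref{Lemma: Second characterization of pi}. Leaves are then removed by induction, as you propose.
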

\begin{proof}
We first prove it for the case when $\overline{T^\pi}$ has no leaves, that is, it is a cycle. 

Starting at $1$ and following the edges $e_m,e_{m-1},\dots,e_1$ in this order, we can travel around $T$, so we must be able to do the same in $T^\pi$. $e_m$ goes from $[1]_\pi$ to $[m]_\pi$, then there are two cases either $e_{m-1}$ goes back to $[1]_\pi$ or continues to another vertex $[m-1]_\pi$ distinct of $[1]_\pi$. The same occurs with $e_{m-2}$, at some point we must go back, let $e_s$ be this instant so that $e_s$ goes from $[s+1]_\pi$ to $[s]_\pi$ and $e_{s-1}$ goes from $[s]_\pi$ back to $[s+1]_\pi$, i.e. $[s+1]_\pi=[s-1]_\pi$. Let $\hat{\pi}$ be the partition with the same blocks as $\pi$ except the block $[s]_\pi$ which becomes the two blocks $[s]_\pi\setminus\{s\}$ and $\{s\}$. Therefore $T^{\hat{\pi}}$ is a double tree and hence by Lemmas \ref{Lemma: First characterization of pi} and \ref{Lemma: Second characterization of pi} it follows that $\overline{\hat{\pi}}\in \NC_2(m)$. However $\overline{\pi}=\overline{\hat{\pi}}$ because $T^{\hat{\pi}}$ results of splitting the vertex $[s]_\pi$ of $T^\pi$ into two vertices, turning the cycle of $\overline{T^\pi}$ into a tree but leaving the edges joining exactly the same pair of vertices. Thus $\overline{\pi}\in \NC_2(m)$. Now we prove it for arbitrary $\overline{T^\pi}$. We do this by induction on $m$. The base case is $m=6$ which follows from the preceding argument because in this case $\overline{T^\pi}$ has no leaves. Let us assume it is true for $m$ and prove it for $m+2$. We can assume $\overline{T^\pi}$ has a leaf as the other case was already proved. We proceed as in Proposition \ref{Proposition: pi prime is non-crossing}. The graph $\overline{T^\pi}$ has a leaf $[u]_\pi=\{u\}$. Then $e_u$ and $e_{u-1}$ are adjacent to $[u]_\pi$ and to other vertex $A$ so that $u-1,u+1\in A$. The rest of the proof follows exactly as in Proposition \ref{Proposition: pi prime is non-crossing}.
\end{proof}

\begin{lemma}\label{Lemma: Characterization of double unicycle first type}
Let $\pi\in \cP(m)$. Then $T^\pi$ is a double unicycle of first type if and only if $T^\pi$ results of joining two vertices of $T^\sigma$ that are at distance distinct from $2$, where $T^\sigma$ is a double tree.
\end{lemma}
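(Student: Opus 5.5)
The plan mirrors the proof of Lemma~\ref{Lemma: Characterization of 2-4 tree}, with Proposition~\ref{Proposition: pi is non-crossing} in the role that Proposition~\ref{Proposition: pi prime is non-crossing} played there.

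\emph{Forward direction.} Assume $T^\pi$ is a double unicycle of first type.

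By Proposition~\ref{Proposition: pi is non-crossing}, $\overline{\pi}\in\NC_2(m)$. Set $\sigma:=\gamma\overline{\pi}$. By Lemma~\ref{Lemma: Second characterization of pi} and Remark~\ref{Remark: Relation double tree and non-crossing pairings}, $T^\sigma$ is a double tree with $\#(\sigma)=m/2+1$.

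Next we compare $\sigma$ with $\pi$. Since $T^\pi$ is of first type, every block $\{u,v\}\in\overline{\pi}$ consists of two edges joining the same pair of vertices of $T^\pi$ with opposite orientations. This holds for off-cycle edges by Remark~\ref{Remark: Properties of double unicycle and 2-4 tree}, and for cycle edges by the definition of first type. Lemma~\ref{Lemma: Gamma sigma overline less or equal than pi} then gives $\sigma=\gamma\overline{\pi}\le\pi$.

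Proposition~\ref{Proposition: Characterization of pi=m/2} gives $\#(\pi)=m/2=\#(\sigma)-1$. Hence $\pi$ is obtained from $\sigma$ by merging exactly two blocks $A,B$, i.e.\ by identifying two vertices of $T^\sigma$.

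It remains to show $d_{T^\sigma}(A,B)\neq 2$. Identifying two vertices at distance $d$ in a tree creates a unique cycle of length $d$, and otherwise keeps all edge pairs as they are. If $d=1$, the merged edge pair becomes a pair of loops. If $d=2$, the two edge pairs on the path become parallel and form an edge of multiplicity $4$, giving a 2-4 tree rather than a double unicycle. So $d\ge 3$; a loop is excluded because $\overline{T^\pi}$ is a unicycle in which every edge has multiplicity $2$, so its cycle has length at least $3$.

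\emph{Converse.} Suppose $T^\sigma$ is a double tree, with $\sigma=\gamma\overline{\sigma}$ and $\overline{\sigma}\in\NC_2(m)$, and let $A,B$ be two vertices at distance $d\ge 3$. Merging $A$ and $B$ produces $\pi$ with $\overline{T^\pi}$ unicyclic: the cycle is the former path, of length $d\ge3$, so no multiplicities change and every edge still has multiplicity $2$. The edge pairs keep their endpoints, hence they keep their opposite orientations, and so the result is of first type.

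The convention on $d=1$ needs care: if the statement intends ``distance distinct from $2$'' to include $d=1$, then a merged pair would become loops. I would read the hypothesis as $d\ge3$, or treat $d=1$ as a degenerate case excluded by the definition of unicycle.

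\emph{Main obstacle.} The step I expect to be delicate is the distance bookkeeping in the forward direction. One must check that merging two vertices of $T^\sigma$ at distance $d\ge3$ yields exactly the cycle in $\overline{T^\pi}$, and that the partition $\overline{\pi}$ computed from $T^\pi$ equals $\overline{\sigma}$, so that no accidental new parallelisms arise. The latter holds because the edge classes are determined by unordered endpoint pairs, and distinct pairs in a tree stay distinct after merging two vertices at distance $\ge3$.
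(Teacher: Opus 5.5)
Your forward argument follows the paper's route: Proposition~\ref{Proposition: pi is non-crossing}, then $\sigma=\gamma\overline{\pi}$, then Lemma~\ref{Lemma: Gamma sigma overline less or equal than pi}, then the block count $\#(\pi)=\#(\sigma)-1$. Your explicit check of the orientation hypothesis is correct, as is your check that no new parallel edges appear when $d\ge 3$; both are more careful than the paper's one-line converse.

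The gap is the case $d=1$. You exclude it by saying that a unicycle all of whose edges have multiplicity $2$ must have a cycle of length at least $3$. But multiplicity $2$ only rules out cycles of length $2$, which cannot occur in an elementarized graph anyway; it does not rule out loops. The paper's graphs explicitly allow loops, and a unicycle is a connected graph with as many vertices as edges. Merge two adjacent vertices of a double tree $T^\sigma$. You get $\#(\pi)=m/2$ and no edge class of multiplicity one. Moreover $\overline{T^\pi}$ is not a tree, because the pair over the merged edge becomes a loop class of multiplicity $2$. So Proposition~\ref{Proposition: Characterization of pi=m/2} itself forces $T^\pi$ to be a double unicycle whose cycle is that loop. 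The smallest instance is $m=2$, $\pi=\{\{1,2\}\}$. Hence your forward conclusion $d\ge 3$ is false in general. Your converse also omits a case that the statement, which says ``distance distinct from $2$'', includes.

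Reading the lemma as $d\ge 3$ is not a harmless convention, because the paper needs $d=1$ later. In Proposition~\ref{Proposition: Infinitesimal contribution pi=q for double unicycle first type}, all pairs $A\in\sigma$, $B\in\operatorname{Kr}(\sigma)$ are summed; these lie at odd distance, including $1$. This reproduces exactly the $L_\pi$ terms of Proposition~\ref{Proposition: Order of sum when pi=q+1}, which must cancel. For $q=1$, the loop term $\min(n,p)/n^2$ is exactly what turns $(np-\min(n,p))/n^2$ into the exact value $\frac{1}{n}\E\circ\Tr(M)=p/n$.

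The repair is short:
\begin{itemize}
\item For the two loops $e_u,e_v$, the identities $[u]_\pi=[\gamma(v)]_\pi$ and $[v]_\pi=[\gamma(u)]_\pi$ hold trivially. So Lemma~\ref{Lemma: Gamma sigma overline less or equal than pi} and the rest of your forward argument go through unchanged, and you should only exclude $d=2$.
\item In the converse, add the case $d=1$. Merging adjacent vertices turns one edge class into a loop class of multiplicity $2$ and leaves every other class unchanged. The loop pair satisfies the endpoint form of the opposite-orientation condition, which is all that is used. So the result is a double unicycle of first type, and in $C^\pi$ the pair contributes $\E(x_{j,j}\overline{x_{j,j}})=1$.
\end{itemize}
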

\begin{proof}
The converse is clear so let us prove the other implication. Let $\sigma=\gamma\overline{\sigma}$ with $\overline{\sigma}=\overline{\pi}\in \NC_2(m)$ (thanks to Proposition \ref{Proposition: pi is non-crossing}). As in Lemma \ref{Lemma: Characterization of 2-4 tree} we conclude $T^\sigma$ is a double tree. Since $\overline{\sigma}\in \NC_2(m)$
$$\#(\sigma)+\#(\overline{\sigma})=\#(\gamma\overline{\sigma})+\#(\overline{\sigma})=m+1.$$
So we conclude $\#(\sigma)=m/2+1=\#(\pi)+1$. However, Lemma \ref{Lemma: Gamma sigma overline less or equal than pi} says
$$\sigma \leq \pi,$$
thus $\pi$ must be the result of merging two blocks of $\sigma$. These blocks must correspond to vertices of $T^\sigma$ that are at distance distinct from 2 so that the resulting graph is a double unicycle.
\end{proof}

Finally, we do not characterize double unicycles of the second type, as they exhibit more chaotic behavior and are not necessary for our purposes.

\section{Asymptotic expansion of the moments}\label{Section: Asymptotic expansion of the moments}

Throughout the paper we will verify asymptotic freeness and infinitesimal freeness of independent covariance matrices. Thus we are interested in the $n$-power expansion of the term

$$\frac{1}{n} \E \circ \Tr(M^{(r_1)}\cdots M^{(r_q)}).$$

In this section we develop a first expansion that later will help us to derive a $1/n$ expansion of the moments. Let us compute our first expansion.

\begin{proposition}\label{Proposition: Expansion of moments 1}
Let $M^{(r_1)},\dots,M^{(r_q)}$ be a collection of covariance matrices, then
\begin{equation}
\frac{1}{n} \E \circ \textup{\Tr}(M^{(r_1)}\cdots M^{(r_q)})=\frac{1}{n^{q+1}}\sum_{\pi\in \cP(2q)}\sum_{\substack{i_{odd}=1,\dots,n \\ i_{even}=1,\dots,p \\ ker(i)=\pi}}\E(x_i),
\end{equation}
where,
$$\E(x_i) := \E\Bigg(\prod_{k=1,3,\dots,2q-1}x_{i_k,i_{k+1}}^{(r_{\frac{k+1}{2}})}\overline{x_{i_{k+2},i_{k+1}}^{(r_{\frac{k+1}{2}})}}\Bigg).$$
Here $i_{odd}=i_1,i_3,\dots,i_{2q-1}$ and $i_{even}=i_2,i_4,\dots,i_{2q}$.
\end{proposition}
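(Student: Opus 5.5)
This is a direct expansion of the trace followed by a regrouping of the index sum according to its kernel.

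First, write out the matrix product entrywise. Since $M^{(r)}=\frac1n X^{(r)}X^{(r)*}$, we have
\[
(M^{(r)})_{a,b}=\frac1n\sum_{j=1}^p x^{(r)}_{a,j}\overline{x^{(r)}_{b,j}}.
\]
Expanding $\Tr(M^{(r_1)}\cdots M^{(r_q)})$ as a sum over cyclic row indices $a_1,\dots,a_q\in[n]$ (with $a_{q+1}=a_1$) and column indices $j_1,\dots,j_q\in[p]$ gives
\[
\frac{1}{n^q}\sum \prod_{u=1}^q x^{(r_u)}_{a_u,j_u}\overline{x^{(r_u)}_{a_{u+1},j_u}}.
\]
Relabel the indices as $i_{2u-1}=a_u$ and $i_{2u}=j_u$. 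Then the $u$-th factor is exactly
\[
x_{i_k,i_{k+1}}^{(r_{\frac{k+1}{2}})}\overline{x_{i_{k+2},i_{k+1}}^{(r_{\frac{k+1}{2}})}}
\quad\text{with } k=2u-1,
\]
under the cyclic convention $i_{2q+1}=i_1$, which encodes the trace closing up. Taking expectation and multiplying by $\frac1n$ yields the prefactor $\frac{1}{n^{q+1}}$ and the sum over $i:[2q]\to\mathbb N$ with odd coordinates in $[n]$ and even coordinates in $[p]$.

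Second, partition this index set according to $\ker(i)\in\cP(2q)$, where $\ker(i)$ is the partition with $s\sim t$ iff $i_s=i_t$. Each admissible $i$ has exactly one kernel, so the sum splits as $\sum_{\pi\in\cP(2q)}\sum_{\ker(i)=\pi}$. This is just a disjoint decomposition of a finite sum, so no convergence issues arise.

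The only point needing care is the bookkeeping: the cyclic convention $i_{2q+1}=i_1$, and the fact that a block of $\pi$ mixing odd and even positions is allowed. Such a block simply forces equal numerical values across row and column ranges, which are both subsets of $\mathbb N$. There is no real obstacle here. The substantive work, which is the independence and centering that kill most $\pi$, is postponed to later results and is not needed for this identity.
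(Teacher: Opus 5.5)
Your proposal is correct and follows the paper's proof: expand each $M^{(r_u)}$ entrywise, place the row indices at the odd positions and the column indices at the even positions (with the cyclic convention $i_{2q+1}=i_1$), collect the factor $n^{-(q+1)}$, and split the finite index sum according to $\ker(i)\in\cP(2q)$. The paper's proof also leaves implicit that moving $\E$ inside the sum requires each summand to be integrable. This holds because the entries have moments of all orders, so each product of $2q$ entries is integrable by H\"older's inequality.
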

\begin{proof}
\begin{eqnarray*}
&&\frac{1}{n} \E \circ \Tr(M^{(r_1)}\cdots M^{(r_q)}) \\
&=& \frac{1}{n} \sum_{i_1,\dots ,i_q=1}^n \E(M^{(r_1)}_{i_1,i_2}M^{(r_2)}_{i_2,i_3}\cdots M^{(r_q)}_{i_q,i_1}) \\
&=& \frac{1}{n^{q+1}}\sum_{i_1,\dots,i_q=1}^n \sum_{l
_1,\dots,l_q=1}^p \E(x_{i_1,l_1}^{(r_1)}\overline{x_{i_2,l_1}^{(r_1)}}x_{i_2,l_2}^{(r_2)}\overline{x_{i_3,l_2}^{(r_2)}}\cdots x_{i_q,l_q}^{(r_q)}\overline{x_{i_1,l_q}^{(r_q)}}) \\
&=& \frac{1}{n^{q+1}}\sum_{i_1,i_3\dots,i_{2q-1}=1}^n \sum_{i_2,i_4,\cdots,i_{2q}=1}^p \E(x_{i_1,i_2}^{(r_1)}\overline{x_{i_3,i_2}^{(r_1)}}x_{i_3,i_4}^{(r_2)}\overline{x_{i_5,i_4}^{(r_2)}}\cdots x_{i_{2q-1},i_{2q}}^{(r_q)}\overline{x_{i_1,i_{2q}}^{(r_q)}}) \\
&=& \frac{1}{n^{q+1}}\sum_{\pi\in \cP(2q)}\sum_{\substack{i_{odd}=1,\dots,n \\ i_{even}=1,\dots,p \\ ker(i)=\pi}}\E\Bigg(\prod_{k=1,3,\dots,2q-1}x_{i_k,i_{k+1}}^{(r_{\frac{k+1}{2}})}\overline{x_{i_{k+2},i_{k+1}}^{(r_{\frac{k+1}{2}})}}\Bigg).
\end{eqnarray*}
\end{proof}

In order to get a $n$-power expansion, we require to know the $n$-order of each of the terms in the summation for every fixed $\pi\in \cP(2q)$. This is established in the following proposition.

\begin{proposition}\label{Proposition: N order of pi sum}
Let $\pi\in \cP(2q)$, then
$$\Bigg| \sum_{\substack{i_{odd}=1,\dots,n \\ i_{even}=1,\dots,p \\ ker(i)=\pi}}\E(x_i) \Bigg| =O(n^{\#(\pi)}).$$
\end{proposition}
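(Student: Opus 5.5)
The plan is a direct counting argument: bound the number of index tuples with $\ker(i)=\pi$ by $O(n^{\#(\pi)})$, and bound each expectation $\E(x_i)$ uniformly in $n$. Here $\#(\pi)$ is the number of distinct values among $i_1,\dots,i_{2q}$. The product of the two bounds gives the claim.

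\emph{Counting step.} Fix $\pi\in\cP(2q)$. A tuple $i$ with $\ker(i)=\pi$ is determined by assigning one value to each block of $\pi$, with distinct blocks getting distinct values. If a block contains an odd position, its value ranges over at most $n$ possibilities. If it contains only even positions, its value ranges over at most $p$ possibilities. If a block mixes odd and even positions, the common value must lie in $\{1,\dots,\min(n,p)\}$, which is still at most $\max(n,p)$ choices. Hence the number of admissible tuples is at most $\max(n,p)^{\#(\pi)}$. Since $p/n\to c<\infty$, we have $p\le Cn$ for some constant $C$ and all large $n$, so the count is $O(n^{\#(\pi)})$. If the constraints cannot be met, the sum is empty and the bound is trivial.

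\emph{Uniform bound on each expectation.} For a fixed tuple $i$, $\E(x_i)$ is the expectation of a product of $2q$ factors, each an entry $x^{(r)}_{a,b}$ or its conjugate. I would avoid any use of independence here and simply apply the generalized Hölder inequality:
\[
|\E(x_i)|\le \prod_{k=1}^{2q}\bigl(\E|x^{(r_k)}_{a_k,b_k}|^{2q}\bigr)^{1/(2q)}\le \max_{r,a,b}\E|x^{(r)}_{a,b}|^{2q}.
\]
The last expression must be bounded by a constant $K_q$ independent of $n$ and of the indices. This is where I expect the only real subtlety. The definition gives moments of all orders entry by entry, but a uniform bound needs an implicit standing assumption: either the entries are identically distributed (as the paper's use of $\E(x_{1,2}^2)$ and $\E|x_{1,2}|^4$ as global constants suggests) or all moments of order $2q$ are uniformly bounded. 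I would state that assumption explicitly at this point.

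Combining the two steps gives
\[
\Bigl|\sum_{\ker(i)=\pi}\E(x_i)\Bigr|\le K_q\,(Cn)^{\#(\pi)}=O(n^{\#(\pi)}).
\]
The implied constant depends only on $q$, $\pi$, $c$ and the moment bounds, not on the labels $r$.
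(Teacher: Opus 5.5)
Your proof is correct, and it takes a more direct route than the paper's.

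The paper also starts from $|\E(x_i)|\le\E\prod|x|$. It then asserts that this bound depends only on $\ker(i)$, which tacitly uses identically distributed entries, and reduces to counting the tuples with $\ker(i)=\pi$. It does the counting by induction on $\#(\pi)$. It sorts the blocks into purely even, purely odd and mixed classes $\mathcal E,\mathcal O,\mathcal M$. It then writes the count exactly as $(p)_{|\mathcal E|}(n)_{|\mathcal O|}(\min\{n,p\})_{|\mathcal M|}$ minus the counts for coarser partitions that merge blocks from different classes.

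For the upper bound alone that induction is unnecessary: nonnegativity of the subtracted counts already gives a bound of your type. The exact inclusion--exclusion form matters because the paper reuses it later. It supplies the leading term $(p)_{\#(\sigma)}(n)_{\#(\operatorname{Kr}(\sigma))}$ in the proof of Theorem~\ref{Lemma: Expansion of moments 4} and the $L_\pi$ correction in Proposition~\ref{Proposition: Order of sum when pi=q+1}. Your overcount $\max(n,p)^{\#(\pi)}$ cannot supply either of those.

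Conversely, your generalized H\"older step uses neither independence nor identical distribution. It needs only a bound on $\E|x^{(r)}_{a,b}|^{2q}$ that is uniform in $n$ and in the entry. You are right to make that hypothesis explicit: the definition only gives finite moments entry by entry, and the paper's ``depends only on $\ker(i)$'' quietly relies on an assumption of the same kind.
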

\begin{proof}
First of all note that 
$$|\E(x_i)|\leq \E\Bigg(\prod_{k=1,3,\dots,2q-1}|x_{i_k,i_{k+1}}^{(r_{\frac{k+1}{2}})}||\overline{x_{i_{k+2},i_{k+1}}^{(r_{\frac{k+1}{2}})}}|\Bigg),$$
where the right hand side only depends on $ker(i)$ because $|x_{i,j}|=|\overline{x_{i,j}}|$ so we are reduced to prove that 
$$\Bigg| \sum_{\substack{i_{odd}=1,\dots,n \\ i_{even}=1,\dots,p \\ ker(i)=\pi}}1 \Bigg| =O(n^{\#(\pi)}).$$
We prove this by induction on $\#(\pi)$. The case $\#(\pi)=1$ follows directly as
$$\Bigg| \sum_{\substack{i_{odd}=1,\dots,n \\ i_{even}=1,\dots,p \\ ker(i)=\pi}}1 \Bigg|=\Bigg| \sum_{\substack{i_{odd}=1,\dots,n \\i_{even}=1,\dots,p \\ i_{1}=i_2=\cdots =i_{2q}}}1 \Bigg|=\min\{n,p\} =O(n).$$
Now we assume it is true for $\#(\pi)\leq m$ and prove it for $\#(\pi)=m+1$. 
Let 
$$\mathcal{E}=\{B\in\pi: B\subset \{2,4,\dots,2q\}\},$$
$$\mathcal{O}=\{B\in\pi: B\subset \{1,3,\dots,2q-1\}\},$$
and
$$\mathcal{M}=\{B\in\pi: B\cap \{1,\dots,2q-1\}\neq \emptyset,\ \text{and}\ B\cap \{2,\dots,2q\}\neq \emptyset\}.$$
Let
\begin{multline*}
L=\{\rho\in\cP(2q):\rho\neq\pi \text{ and every block of }\rho=A\cup B\cup C,
A\in \mathcal{E}\cup \{\emptyset\}, B\in \mathcal{O}\cup\{\emptyset\},C\in\mathcal{M}\cup\{\emptyset\}\}
\end{multline*}
Note that every partition $\rho\in L$ is such that $\pi\leq \rho$ because $\rho$ merges blocks of $\pi$. Further, $\rho$ never merges blocks of $\pi$ in the same set $\mathcal{E},\mathcal{O}$ or $\mathcal{M}$. 
Observe that $(p)_{|\mathcal{E}|}$ counts the number of ways of choosing $|\mathcal{E}|$ distinct indices from $1$ to $p$. Here $(n)_k=n(n-1)\cdots (n-k+1)$ is the falling factorial. Therefore
$$(p)_{|\mathcal{E}|}(n)_{|\mathcal{O}|}(\min\{n,p\})_{|\mathcal{M}|},$$
represents the number of ways of choosing $|\mathcal{E}|$ distinct indices from $1,\dots,p$, $|\mathcal{O}|$ distinct indices from $1,\dots,n$ and $|\mathcal{M}|$ distinct indices from $1,\dots,\min\{n,p\}$. This is almost the number of ways of choosing indices $i_1,\dots,i_{2q}$ such that $ker(i)=\pi$ however, we are allowing repetition of the indices between the sets $\mathcal{E},\mathcal{O}$ and $\mathcal{M}$. Thus,
$$\sum_{\substack{i_{odd}=1,\dots,n \\ i_{even}=1,\dots,p \\ ker(i)=\pi}}1=(p)_{|\mathcal{E}|}(n)_{|\mathcal{O}|}(\min\{n,p\})_{|\mathcal{M}|}-\sum_{\rho\in L}\sum_{\substack{i_{odd}=1,\dots,n \\ i_{even}=1,\dots,p \\ ker(i)=\rho}}1,$$
because we are precisely subtracting the case where we repeat some index.
The order of $(p)_{|\mathcal{E}|}(n)_{|\mathcal{O}|}(\min\{n,p\})_{|\mathcal{M}|}$ is $n^{|\mathcal{E}|+|\mathcal{O}|+|\mathcal{M}|}=n^{\#(\pi)}=n^{m+1}$ while by induction, for each $\rho\in L$ the order of $\sum_{\substack{i_{odd}=1,\dots,n \\ i_{even}=1,\dots,p \\ ker(i)=\rho}}1$ is $n^{\#(\rho)}$ because $\#(\rho)<\#(\pi)=m+1$. Therefore we conclude that the order of $\sum_{\substack{i_{odd}=1,\dots,n \\ i_{even}=1,\dots,p \\ ker(i)=\pi}}1$ is $n^{m+1}$ as required.
\end{proof}

We now invoke our graph theory. Let us introduce the following graph.

\begin{definition}\label{Definition: T graph of Wishart}
Let $C=(V,E)$ be the oriented graph with set of vertices $V=[2q]$ and edges
$$e_{2i-1}=(2i-1,2i), 1\leq i\leq q,$$
$$e_{2i}=(2i+1,2i), 1\leq i\leq q,$$
with the convention $2q+1=1$.
\end{definition}

\begin{remark}\label{Remark: Expectation depending on the graph}
For a choice of indices $i=(i_1,\ldots,i_{2q})$, we have
\[
\mathbb{E}(x_i)
=
\mathbb{E}\Bigg(
    \prod_{k=1,3,\ldots,2q-1}
    x_{i_{\operatorname{sr}(e_k)},
       i_{\operatorname{tr}(e_k)}}^{(r_{(k+1)/2})}
    \prod_{k=2,4,\ldots,2q}
    \overline{
        x_{i_{\operatorname{sr}(e_k)},
           i_{\operatorname{tr}(e_k)}}^{(r_{k/2})}
    }
\Bigg).
\]
\end{remark}

\begin{proposition}\label{Proposition: Graph must have edges mult at least two}
Let $\pi\in \cP(2q)$. If $\overline{C^\pi}$ has an edge of multiplicity $1$ then $\E(x_i)=0$ for any $i$ such that $ker(i)=\pi$.
\end{proposition}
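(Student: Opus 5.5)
The plan is to make the edge of multiplicity one correspond to a random entry that appears exactly once in the product, and then use independence and centering to factor it out.

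Fix $i$ with $\ker(i)=\pi$, and let $\overline{e}$ be the edge of $\overline{C^\pi}$ of multiplicity $1$. Then $\overline{e}$ is a singleton class $\{e_k\}$. By Definition \ref{Definition: Quotient graph}, no other edge $e_l$, $l\neq k$, joins the same unordered pair of vertices $\{[sr(e_k)]_\pi,[tr(e_k)]_\pi\}$ of $C^\pi$, in either orientation.

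By Remark \ref{Remark: Expectation depending on the graph}, $\E(x_i)$ is the expectation of a product over all edges $e_l$. Each factor is $x^{(r)}_{i_{sr(e_l)},i_{tr(e_l)}}$ or its conjugate, with $r=r_{\lceil l/2\rceil}$. The first index of each factor is always odd and the second always even, so the pair of vertices of $e_l$ is read unambiguously as a (row, column) pair.

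Since $\ker(i)=\pi$, two entries $x^{(r)}_{i_a,i_b}$ and $x^{(r')}_{i_c,i_d}$ coincide as random variables only if $r=r'$ and $i_a=i_c$, $i_b=i_d$. This forces $[a]_\pi=[c]_\pi$ and $[b]_\pi=[d]_\pi$, so the corresponding edges join the same pair of vertices of $C^\pi$. Hence the entry attached to $e_k$ is the only factor in the product equal to that random variable or its conjugate.

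Different matrix labels are independent families, and within one label the entries are independent. Therefore the factor for $e_k$, namely $y=x^{(r)}_{i_{sr(e_k)},i_{tr(e_k)}}$ or $\overline{y}$, is independent of the product $Z$ of the remaining factors. This gives $\E(x_i)=\E(y)\,\E(Z)$ (or $\E(\overline{y})\E(Z)$). Because $\E(y)=0$ and $\E(\overline{y})=\overline{\E(y)}=0$, we get $\E(x_i)=0$. The product $Z$ is integrable because all moments are finite, via Hölder.

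The only delicate point is the second step: verifying that distinct edges of $C^\pi$ joining different vertex pairs really give distinct random variables. This is the bookkeeping between the kernel $\ker(i)=\pi$ and matrix entries described above; note it is conservative, since distinct labels give independent variables even on the same edge. The rest is immediate.
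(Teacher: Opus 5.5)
Your proof is correct and follows essentially the same route as the paper. The paper's argument is the same: the edge of multiplicity one corresponds to an entry appearing exactly once in the product, so factoring it out by independence and using $\E(x_{i,j})=0$ gives zero. You add detail the paper leaves implicit, namely the odd-source/even-target bookkeeping that shows distinct vertex pairs give distinct entries, the use of independence across labels, and the integrability of the remaining product.
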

\begin{proof}
If $\overline{C^\pi}$ has an edge of multiplicity $1$ then there exist vertices $u,u+1$ connected by a unique edge $(u,u+1)$ or $(u+1,u)$. Assume without loss of generality $e_u=(u,u+1)$ with $u$ odd. If $i$ is such that $ker(i)=\pi$ by Remark \ref{Remark: Expectation depending on the graph}, $\E(x_i)$ has a factor of the form $\E(x_{i_{sr(e_u)},i_{tr(e_u)}}^{(r_{\frac{u+1}{2}})})=0$ where this variable appears exactly once because the edge has multiplicity $1$.
\end{proof}

\begin{lemma}\label{Lemma: Expansion of moments 2}
Let $M^{(r_1)},\dots,M^{(r_q)}$ be a collection of covariance matrices, then
$$\frac{1}{n}\E\circ \textup{\Tr}(M^{(r_1)}\cdots M^{(r_q)})=\frac{1}{n^{q+1}}\sum_{\substack{\pi\in \cP(2q)\\ \overline{C^\pi} \text{ has no edges} \\\text{of multiplicity }1}}\sum_{\substack{i_{odd}=1,\dots,n \\ i_{even}=1,\dots,p \\ ker(i)=\pi}}\E(x_i)$$
\end{lemma}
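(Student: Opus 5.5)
This lemma follows by combining Proposition \ref{Proposition: Expansion of moments 1} with Proposition \ref{Proposition: Graph must have edges mult at least two}. We start from the exact identity of Proposition \ref{Proposition: Expansion of moments 1}:
\[
\frac{1}{n}\E\circ \Tr(M^{(r_1)}\cdots M^{(r_q)})=\frac{1}{n^{q+1}}\sum_{\pi\in \cP(2q)}\sum_{\substack{i_{odd}=1,\dots,n \\ i_{even}=1,\dots,p \\ ker(i)=\pi}}\E(x_i).
\]
We then split the outer sum over $\pi\in\cP(2q)$ into two parts. The first part consists of those $\pi$ for which $\overline{C^\pi}$ has no edge of multiplicity $1$. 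The second part consists of those $\pi$ for which $\overline{C^\pi}$ has at least one edge of multiplicity $1$.

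For every $\pi$ in the second part, Proposition \ref{Proposition: Graph must have edges mult at least two} gives $\E(x_i)=0$ for every multi-index $i$ with $ker(i)=\pi$. So each inner sum in the second part vanishes term by term, and the whole second part contributes zero. Only the first part remains, which is exactly the claimed formula.

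The one point needing care is that Proposition \ref{Proposition: Graph must have edges mult at least two} applies uniformly to every $i$ with $ker(i)=\pi$. This holds because $ker(i)=\pi$ fixes exactly which entries $x^{(r)}_{a,b}$ coincide: if two edges of $C$ are identified in $C^\pi$, their index pairs coincide. An edge of multiplicity $1$ in $\overline{C^\pi}$ therefore yields an entry, with a given row/column pair, that appears exactly once among the factors, either as $x$ or as $\overline{x}$. Such an entry is independent of all other factors, and its mean is zero, so the expectation factors and vanishes.

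One subtlety is that identical row/column indices coming from different matrix labels $r$ give independent variables. This only creates more factorization and more vanishing, so it never obstructs the argument. There is no real obstacle in this proof; it is bookkeeping on the index sets.
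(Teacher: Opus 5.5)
Your proposal is correct and matches the paper's proof, which obtains the lemma in one line by combining Proposition~\ref{Proposition: Expansion of moments 1} with Proposition~\ref{Proposition: Graph must have edges mult at least two}. Your extra check, that a multiplicity-one edge of $\overline{C^\pi}$ gives an index pair occurring in exactly one factor, is the content of the latter proposition; like the paper, it tacitly uses independence of the entry families attached to distinct labels $r$, which the lemma's wording (``a collection of covariance matrices'') does not state but which is the standing assumption wherever the lemma is applied.
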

\begin{proof}
The proof follows immediately by combining Propositions \ref{Proposition: Expansion of moments 1} and \ref{Proposition: Graph must have edges mult at least two}.
\end{proof}

Lemma \ref{Lemma: Expansion of moments 2} gives an expansion of the moments in terms of the graphs $C^\pi$. In Sections \ref{Section: Moments} and \ref{Section: Infinitesimal moments} we further develop an expansion of the moments in terms of the powers of $\frac{1}{n}$. 

\section{Free moments, cumulants and asymptotic freeness}\label{Section: Moments}

In Lemma \ref{Lemma: Expansion of moments 2} we got an expansion of the moments, in this section we would like to further find the explicit coefficient associated to the constant term. Now we consider independent covariance matrices as we would like to prove asymptotic freeness. 

\begin{notation}\label{Notation: Ker rr}
For a choice of indices $r_1,\dots,r_q$ we let $ker(rr)\in \cP(2q)$ be the partition where $2u,2v,2u-1$ and $2v-1$ are in the same block of $ker(rr)$ if and only if $u$ and $v$ are in the same block of $ker(r)$.
\end{notation}

The next proposition establishes a relation between $ker(r)$ and $ker(rr)$.

\begin{proposition}\label{Proposition: Equivalence of ker(r) and Ker(rr)}
Let $\sigma\in \NC(\{2,4,\dots,2q\})$ and $Kr(\sigma)\in \NC(\{1,3,\dots,2q-1\})$ be its kreweras complement. Let $\pi=\sigma\cup Kr(\sigma)$ and let $\overline{\pi}$ be such that $\pi=\gamma\overline{\pi}$, with $\gamma=(1,\dots,2q)$. Let $r_1,\dots,r_q$ be a choice of indices. Then $\overline{\pi}\leq ker(rr)$ if and only if $\sigma/2 \leq ker(r)$. Here $\sigma/2$ is the partition of $\NC(q)$ obtained by dividing every element by two of the blocks of $\sigma$.
\end{proposition}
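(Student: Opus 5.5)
The plan is to make the pairing $\overline{\pi}$ explicit, block by block, in terms of $\sigma$. Then both sides of the equivalence become the same statement about which positions $u,v\in[q]$ are paired.

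\textbf{Step 1: where the even labels go.} Since $\pi=\gamma\overline{\pi}$ with $\gamma$ a full cycle, we have $\overline{\pi}=\gamma^{-1}\pi$. Because $\pi=\sigma\cup Kr(\sigma)\in\NC(2q)$ has $\#(\pi)=q+1$, the permutation $\overline{\pi}$ is a non-crossing pairing. For an even element $2u$, $\pi(2u)$ is the next element of its $\sigma$-block, say $2v$ (with $v=u$ if the block is a singleton). Hence
$$\overline{\pi}(2u)=\gamma^{-1}(2v)=2v-1.$$
So every block of $\overline{\pi}$ has the form $\{2u,\,2v-1\}$, where $2v=\sigma(2u)$ is the successor of $2u$ in its block of $\sigma$. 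I would double-check this against the convention that blocks are cycles listed in increasing order, using Figure \ref{fig:double-tree-pairing} as a sanity check. This step needs care only with the wrap-around $2q+1=1$ and with the left Kreweras convention, which guarantees that $\pi$ is non-crossing on the interleaved order.

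\textbf{Step 2: both directions.} By Notation \ref{Notation: Ker rr}, $2u$ and $2v-1$ lie in the same block of $ker(rr)$ if and only if $r_u=r_v$. Therefore $\overline{\pi}\leq ker(rr)$ holds if and only if $r_u=r_{v}$ whenever $2v=\sigma(2u)$, that is, if and only if $r_u=r_{\sigma/2(u)}$ for every $u\in[q]$.

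Since the cycles of $\sigma/2$ are exactly its blocks, iterating along each cycle shows this condition says $r$ is constant on every block of $\sigma/2$. That is precisely $\sigma/2\leq ker(r)$. The converse is immediate by the same computation, since each block $\{2u,2v-1\}$ of $\overline{\pi}$ arises this way.

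\textbf{Main obstacle.} The only delicate point is Step 1: correctly identifying $\overline{\pi}$ as $\gamma^{-1}\pi$ on even elements, which requires that $\pi$ be non-crossing under the ordering $1,2,\dots,2q$. The odd elements need no separate analysis, because $\overline{\pi}$ is a pairing and every block contains exactly one even element.
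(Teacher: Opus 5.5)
Your proposal is correct and follows essentially the paper's argument. Both rest on the identity $\gamma\overline{\pi}(2u)=\pi(2u)=\sigma(2u)$, i.e.\ $\overline{\pi}(2u)=\sigma(2u)-1$, together with the fact that $2w$ and $2w-1$ always lie in the same block of $ker(rr)$; you package both directions into a single description of the blocks $\{2u,\sigma(2u)-1\}$, and you make explicit the iteration along the cycles of $\sigma$ that the paper's ``without loss of generality $u=\sigma(v)$'' leaves implicit. One small point: concluding that $\overline{\pi}$ is a pairing needs more than $\#(\pi)=q+1$, since you also need that $\overline{\pi}$ swaps parity and so has no fixed points, which your Step 1 already gives.
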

\begin{proof}
First assume $\overline{\pi}\leq ker(rr)$. Let $u/2$ and $v/2$ be two numbers in the same block of $\sigma/2$. Then $u$ and $v$ are both even numbers in the same block of $\sigma$. Without loss of generality assume $u=\sigma(v)$, then
$$\gamma\overline{\pi}(v)=\pi(v)=\sigma(v)=u,$$
thus $\overline{\pi}(v)=\gamma^{-1}(u)$ and therefore $v$ and $\gamma^{-1}(u)$ are in the same block of $\overline{\pi}$. By hypothesis $v$ and $\gamma^{-1}(u)$ are in the same block of $ker(rr)$ and then so are $v$ and $u$. Hence $u/2$ and $v/2$ are in the same block of $ker(r)$. Conversely suppose $\sigma/2 \leq ker(r)$. By definition $\overline{\pi}$ sends even (odd) to odd (even) numbers. Let $u$ even and $v$ odd in the same block of $\overline{\pi}$ with $v=\overline{\pi}(u)$. Thus
$$\gamma(v)=\gamma\overline{\pi}(u)=\pi(u)=\sigma(u),$$
thus $u$ and $\gamma(v)$ are in the same block of $\sigma$ and therefore $u/2$ and $\gamma(v)/2$ are in the same block of $\sigma/2$. By hypothesis $u/2$ and $\gamma(v)/2$ are in the same block of $ker(r)$ and then $u$ and $\gamma(v)$ are in the same block of $ker(rr)$, so are $u$ and $v$, as required.
\end{proof}


\begin{proposition}\label{Proposition: contribution of double tree}
Let $M^{(r_1)},\dots, M^{(r_q)}$ be a collection of independent covariance matrices. Let $\pi\in \cP(2q)$ be such that $\pi=\gamma\overline{\pi}$ with $\overline{\pi}\in \NC_2(2q)$. Let $i_1,\dots,i_{2q}$ be such that $ker(i)=\pi$. If $\E(x_i)\neq 0$, then $\overline{\pi}\leq ker(rr)$ and $\E(x_i)=1$. 
\end{proposition}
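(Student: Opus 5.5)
The plan is to read $\E(x_i)$ off the graph $C^\pi$ via Remark \ref{Remark: Expectation depending on the graph} and to use the double tree structure of $T^\pi$ from Section \ref{Section: Graph theory}. The point is that every matrix entry occurring in the product occurs exactly twice: once as $x$ and once as $\overline{x}$. Independence then forces the two occurrences to carry the same matrix label.

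\emph{Step 1: comparing $C$ with $T=T_{2q}$.} In $C$ the even edges $e_{2i}=(2i+1,2i)=(\gamma(2i),2i)$ agree with those of $T$. The odd edges $e_{2i-1}=(2i-1,2i)$ are the edges of $T$ with reversed orientation. Hence $\overline{C^\pi}=\overline{T^\pi}$, and two edges join the same pair of vertices in $C^\pi$ if and only if they do in $T^\pi$. Since $\pi=\gamma\overline{\pi}$ with $\overline{\pi}\in\NC_2(2q)$, Lemma \ref{Lemma: Second characterization of pi} and Remark \ref{Remark: Relation double tree and non-crossing pairings} show that $T^\pi$ is a double tree. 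The blocks of the edge partition are exactly the pairs $\{u,v\}\in\overline{\pi}$.

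Because non-crossing pairings pair opposite parities, $\pi(u)=\gamma\overline{\pi}(u)$ has the same parity as $u$. So every block of $\pi$ is entirely odd or entirely even. In $C$ every edge goes from an odd vertex, which carries a row index, to an even vertex, which carries a column index. Therefore edge $e_u$ corresponds to the entry in position $(i_{\operatorname{sr}(e_u)},i_{\operatorname{tr}(e_u)})$, with odd and even endpoint roles fixed. By Remark \ref{Remark: Expectation depending on the graph}, odd edges contribute a factor $x$ and even edges a factor $\overline{x}$.

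\emph{Step 2: distinct edges give distinct positions.} Since $\ker(i)=\pi$ and the blocks of $\pi$ are parity-pure, distinct odd vertices of $C^\pi$ receive distinct row indices and distinct even vertices receive distinct column indices. Hence two edges of $C$ give the same position if and only if they join the same pair of vertices of $C^\pi$, that is, if and only if they form a block of $\overline{\pi}$. So each position occurs exactly twice, through a block $\{u,v\}$ with $u,v$ of opposite parity. One occurrence is an $x$ and the other an $\overline{x}$, with labels $r_{\lceil u/2\rceil}$ and $r_{\lceil v/2\rceil}$.

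\emph{Step 3: factorization.} Group the factors by (position, label). The resulting random variables are independent: entries of one matrix are independent, and the families for distinct labels are independent. Consider a block $\{u,v\}$ with $r_{\lceil u/2\rceil}\neq r_{\lceil v/2\rceil}$. Then the variable $x^{(r_{\lceil u/2\rceil})}$ at that position appears exactly once in the product. It factors out with mean zero, so $\E(x_i)=0$.

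Hence $\E(x_i)\neq 0$ forces every block $\{u,v\}$ of $\overline{\pi}$ to have $r_{\lceil u/2\rceil}=r_{\lceil v/2\rceil}$. By Notation \ref{Notation: Ker rr}, this says $u$ and $v$ lie in the same block of $\ker(rr)$, i.e. $\overline{\pi}\leq\ker(rr)$. In that case each block contributes $\E|x^{(r)}_{a,b}|^2=1$, so $\E(x_i)=1$.

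\emph{Main obstacle.} The delicate point is Step 2: making sure no two different tree edges can collide in matrix position. This is exactly where the parity-purity of the blocks of $\gamma\overline{\pi}$ is needed. It rules out a row index of one vertex coinciding numerically with a column index of another. The orientation mismatch between $C$ and $T$ also needs care in Step 1, and it is harmless because only unoriented adjacency is used.
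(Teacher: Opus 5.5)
Your proof is correct and follows the paper's argument. The double-tree structure of $T^\pi$ pairs each $x$-factor with an $\overline{x}$-factor at the same matrix position through the blocks of $\overline{\pi}$; independence then forces equal labels, i.e.\ $\overline{\pi}\leq\ker(rr)$, and each pair contributes $\E|x_{a,b}|^2=1$. You differ from the paper in two small ways: you get ``same orientation in $C^\pi$'' from the parity-purity of the blocks of $\gamma\overline{\pi}$ rather than from Remark~\ref{Remark: Properties of double tree} plus the reversal of odd edges, and you spell out the distinct-positions factorization that the paper leaves implicit. One correction to your closing commentary: parity-purity's real role is to make paired edges hit the same ordered position $(a,b)$, rather than $(a,b)$ and $(b,a)$; it is not about numerical coincidences between row and column indices, which $\ker(i)=\pi$ already excludes and which would not matter anyway.
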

\begin{proof}

By Lemmas \ref{Lemma: First characterization of pi} and \ref{Lemma: Second characterization of pi} we know the graph $C^\pi$ is a double tree where any block $\{u,v\}\in\overline{\pi}$ is such that $e_u$ and $e_v$ join the same pair of vertices. Further $u$ and $v$ must have distinct parity as $\overline{\pi}$ is non-crossing pairing. Let $u,v$ be such that both are in the same block of $\overline{\pi}$ with $u$ even and $v$ odd. Since $e_u$ and $e_v$ connect the same pair of vertices, $x_{i_{sr(e_u)},i_{tr(e_u)}}^{(r_{\frac{u}{2}})}$ and $x_{i_{sr(e_v)},i_{tr(e_v)}}^{(r_{\frac{v+1}{2}})}$ have the same pair of indices. If we require $\E(x_i)\neq 0$ we must have $r_{\frac{u}{2}}=r_{\frac{v+1}{2}}$ so that the entries are not independent. Thus $u/2$ and $(v+1)/2$ are both in the same block of $ker(r)$ and therefore $u$ and $v$ are in the same block of $ker(rr)$. This proves $\overline{\pi}\leq ker(rr)$. Finally, thanks to Remark \ref{Remark: Properties of double tree} the edges $e_u$ and $e_v$ join the same pair of vertices with the same orientation (let us recall the orientation of odd edges in $C$ are opposite to odd edges in $T_{2q}$). From this fact we conclude that $\E(x_i)$ has all factors of the form $\E(x_{i,j}\overline{x_{i,j}})=\E(|x_{i,j}|^2)=1$. Hence $\E(x_i)=1$.
\end{proof}

We are ready to prove our main results on the moments.

\begin{proof}[Proof of Theorem \ref{Lemma: Expansion of moments 4}]
Let us recall the Equation of Lemma \ref{Lemma: Expansion of moments 2}.
$$\frac{1}{n}\E\circ \textup{\Tr}(M^{(r_1)}\cdots M^{(r_q)})=\frac{1}{n^{q+1}}\sum_{\substack{\pi\in \cP(2q)\\ \overline{C^\pi} \text{ has no edges} \\\text{of multiplicity }1}}\sum_{\substack{i_{odd}=1,\dots,n \\ i_{even}=1,\dots,p \\ ker(i)=\pi}}\E(x_i)$$
By Proposition \ref{Proposition: N order of pi sum} we know for each $\pi\in \cP(2q)$ the sum over $i$ has order $n^{\#(\pi)}$, further, the graph $C$ and $T_{2q}$ defined in notation \ref{Notation: Graph m cycle} are the same up to orientation of the edges, therefore, thanks to Lemma \ref{Lemma: Second characterization of pi} we know
$$\#(\pi)\leq q+1,$$
with equality if and only if $\pi=\gamma\overline{\pi}$ and $\overline{\pi}\in \NC_2(2q)$. Therefore
\begin{equation*}
\frac{1}{n}\E\circ \Tr(M^{(r_1)}\cdots M^{(r_q)})=\frac{1}{n^{q+1}}\sum_{\substack{\overline{\pi}\in \NC_2(2q) \\ \pi=\gamma\overline{\pi}}}\sum_{\substack{i_{odd}=1,\dots,n \\ i_{even}=1,\dots,p \\ ker(i)=\pi}}\E(x_i)+O(\frac{1}{n}).
\end{equation*}
Here $\gamma=(1,\dots,2q)\in S_{2q}$. By Proposition \ref{Proposition: contribution of double tree} the last equation simplifies to
\begin{equation}\label{Aux 3}
\frac{1}{n}\E\circ \Tr(M^{(r_1)}\cdots M^{(r_q)})=\frac{1}{n^{q+1}}\sum_{\substack{\overline{\pi}\in \NC_2(2q) \\ \pi=\gamma\overline{\pi} \\ \overline{\pi}\leq ker(rr)}}\sum_{\substack{i_{odd}=1,\dots,n \\ i_{even}=1,\dots,p \\ ker(i)=\pi}}1+O(\frac{1}{n}).
\end{equation}
Observe that $\pi\in \NC(2q)$ because it is the kreweras complement of a non-crossing pairing. Further, any block of $\pi$ only contains odd or even numbers because $\gamma$ and $\overline{\pi}$ send odd (and even) to even (and odd) numbers. Thus $\pi$ can be written as $\sigma \cup Kr(\sigma)$ where $\sigma\in \NC(\{2,\dots,2q\})$. From Proposition \ref{Proposition: Equivalence of ker(r) and Ker(rr)} and Equation \ref{Aux 3} we get
\begin{equation}\label{Aux 4}
\frac{1}{n}\E\circ \Tr(M^{(r_1)}\cdots M^{(r_q)})=\frac{1}{n^{q+1}}\sum_{\substack{\sigma\in \NC(\{2,\dots,2q\}) \\ \pi=\sigma\cup Kr(\sigma) \\ \sigma/2\leq ker(r)}}\sum_{\substack{i_{odd}=1,\dots,n \\ i_{even}=1,\dots,p \\ ker(i)=\pi}}1+O(\frac{1}{n}).
\end{equation}
To conclude observe that
$$\sum_{\substack{i_{odd}=1,\dots,n \\ i_{even}=1,\dots,p \\ ker(i)=\pi}}1=(p)_{|\#(\sigma)|}(n)_{|\#(Kr(\sigma))|}+O(n^{\#(\sigma)+\#(Kr(\sigma))-1}),$$
the latter  follows from the same argument used in Proposition \ref{Proposition: N order of pi sum}. Combining this with equation \ref{Aux 4} yields
\begin{equation*}
\frac{1}{n}\E\circ \Tr(M^{(r_1)}\cdots M^{(r_q)})=\frac{1}{n^{q+1}}\sum_{\substack{\sigma\in \NC(\{2,\dots,2q\}) \\ \sigma/2\leq ker(r)}}(p)_{|\#(\sigma)|}(n)_{|\#(Kr(\sigma))|}+O(\frac{1}{n}),
\end{equation*}
which simplifies to
\begin{equation*}
\frac{1}{n}\E\circ \Tr(M^{(r_1)}\cdots M^{(r_q)})=\sum_{\substack{\sigma\in \NC(\{2,\dots,2q\}) \\ \sigma/2\leq ker(r)}}\left(\frac{p}{n}\right)^{\#(\sigma)}+O(\frac{1}{n}).
\end{equation*}
\end{proof}

\begin{proof}[Proof of Corollary \ref{Corollary: Moments and cumulants}]
1) Follows immediately by taking limit in Theorem \ref{Lemma: Expansion of moments 4}. The moment-cumulant relation says that the cumulants are given by 
$$\kappa_n(M^{(r_{i_1})},\dots,M^{(r_{i_n})})=c,$$
whenever $i_1 =\cdots =i_n$ and $0$ otherwise which proves 2). Finally the condition $\sigma \leq ker(r)$ is equivalent to vanishing of mixed cumulants which simplifies to freeness, proving 3).
\end{proof}

\section{Infinitesimal free moments, cumulants and asymptotic infinitesimal freeness}\label{Section: Infinitesimal moments}

In this section, we present a more detailed version of Theorem \ref{Lemma: Expansion of moments 4}. More precisely, we now compute the coefficient of order $\frac{1}{n}$. In order to make this easier for the reader, we do first the case when $\mathbb{E}(x_{i,j}^2)=0$ in Subsection \ref{Section: zero pseudo variance}. Then, we attack the general case in Subsection \ref{Section: Case of non zero psudovariance}.

\subsection{Zero pseudo variance}\label{Section: zero pseudo variance}
We set $T=T_{2q}$ as defined in Notation \ref{Notation: Graph m cycle} and $C$ as in Definition \ref{Definition: T graph of Wishart}. Recall that $T$ and $C$ are the same graph up to orientation of the edges, thus all results for $T$ also apply to $C$ except those involving the orientation of the edges.

Let us recall Equation of Lemma \ref{Lemma: Expansion of moments 2}
\begin{equation}\label{Aux 7}
\frac{1}{n}\E\circ \Tr(M^{(r_1)}\cdots M^{(r_q)})=\frac{1}{n^{q+1}}\sum_{\substack{\pi\in \cP(2q)\\ \overline{C^\pi} \text{ has no edges} \\\text{of multiplicity }1}}\sum_{\substack{i_{odd}=1,\dots,n \\ i_{even}=1,\dots,p \\ ker(i)=\pi}}\E(x_i).
\end{equation}
From Proposition \ref{Proposition: N order of pi sum} we know each sum $\sum_{\substack{i_{odd}=1,\dots,n \\ i_{even}=1,\dots,p \\ ker(i)=\pi}}\E(x_i)$ has order $n^{\#(\pi)}$. So, if we want to compute the coefficient of order $\frac{1}{n}$, we need to consider partitions $\pi\in\cP(2q)$ such that $\#(\pi)=q+1$ and $\#(\pi)=q$. Since $\overline{C^\pi}$ has an edge of multiplicity $1$ if and only if $\overline{T^\pi}$ does, we can consider $T^\pi$ instead of $C^\pi$. We characterized in Section \ref{Section: Graph theory} the graphs $T^\pi$ with no edges of multiplicity $1$ such that $\#(\pi)=q+1$ and $\#(\pi)=q$. These are double trees, 2-4 trees and double unicycles respectively. So, now let us compute the corresponding contribution $\E(x_i)$ for each $i$ such that $\ker(i)=\pi$ when $\#(\pi)=q$. Let us recall that the case when $\#(\pi)=q+1$ was already done in Section \ref{Section: Moments}.

\begin{proposition}\label{Proposition: contribution of 2-4 tree}
Let $M^{(r_1)},\dots, M^{(r_q)}$ be a collection of independent covariance matrices. Let $\pi\in \cP(2q)$ be such that $C^\pi$ is a 2-4 tree. Let $i_1,\dots,i_{2q}$ be such that $ker(i)=\pi$. If $\E(x_i)\neq 0$, then either
\begin{enumerate}
    \item $\overline{\pi}^\prime \leq ker(rr)$ and $\E(x_i)=1$, further only one of the choices for $\overline{\pi}^\prime$ satisfies $\overline{\pi}^\prime \leq ker(rr)$. Here $\overline{\pi}^\prime$ is as in Notation \ref{Notation: New pi of 2-4 tree}, or,
    \item $\overline{\pi}\leq ker(rr)$ and $\E(x_i)=\E|x_{1,2}|^4$. 
\end{enumerate}
\end{proposition}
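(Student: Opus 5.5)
We use the structure of a 2-4 tree from Remark \ref{Remark: Properties of double unicycle and 2-4 tree} and Notation \ref{Notation: New pi of 2-4 tree}. Fix $i$ with $ker(i)=\pi$. Since $\overline{C^\pi}$ is a tree, distinct edges of $\overline{C^\pi}$ correspond to distinct entry positions $(i_{sr},i_{tr})$. Independence of entries, and of distinct labels, therefore factorizes $\E(x_i)$ over the blocks of $\overline{\pi}$.

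\textbf{Multiplicity-two blocks.} For every block of size two we argue exactly as in Proposition \ref{Proposition: contribution of double tree}. The two edges join the same vertices with opposite orientation in $T$. Hence in $C$ they give the factor $x_{a,b}\overline{x_{a,b}}$, with the same indices, and one edge has odd label and the other even. If the labels $r$ differ, the factor is $\E(x)\E(\overline{x})=0$; otherwise it equals $1$. Thus every such block $\{u,v\}$ satisfies $u\sim v$ in $ker(rr)$, and it contributes $1$.

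\textbf{The block $\{u,v,w,x\}$ of multiplicity four.} By Remark \ref{Remark: Properties of double unicycle and 2-4 tree}, $u,w$ are even and $v,x$ are odd. The edges $e_u,e_w$ share one orientation in $T$ and $e_v,e_x$ the opposite one. Passing to $C$ flips the odd edges, so all four edges carry the same index pair $(a,b)$. The factor is then
$$\overline{x^{(r_{u/2})}_{a,b}}\;\overline{x^{(r_{w/2})}_{a,b}}\;x^{(r_{(v+1)/2})}_{a,b}\;x^{(r_{(x+1)/2})}_{a,b}.$$
We split according to how the four labels coincide.
\begin{itemize}
\item If all four labels are equal, the factor is $\E|x_{a,b}|^4=\E|x_{1,2}|^4$, assuming identically distributed entries as implicit in the statement. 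In this case $\{u,v,w,x\}$ lies in one block of $ker(rr)$, so $\overline{\pi}\le ker(rr)$: case 2.
\item If the labels split into two distinct values, the expectation factorizes into two independent pieces. A piece containing two conjugated entries or two unconjugated entries gives $\E(x^2)$ or its conjugate, which is $0$ by the zero pseudo-variance hypothesis of this subsection. A piece containing a single entry has mean zero. The only surviving splittings pair an even edge with an odd one, namely $\{u,v\},\{w,x\}$ or $\{u,x\},\{w,v\}$. Each such splitting gives $1\cdot 1=1$, and it is exactly one of the two choices of $\overline{\pi}'$, with $\overline{\pi}'\le ker(rr)$: case 1.
\item If some label occurs only once, the factor vanishes by centering.
\end{itemize}

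\textbf{Uniqueness of $\overline{\pi}'$ in case 1.} Both choices of $\overline{\pi}'$ could satisfy $\overline{\pi}'\le ker(rr)$ only if $r_{u/2}=r_{(v+1)/2}=r_{(x+1)/2}=r_{w/2}$. That forces all four labels equal, which is case 2 rather than case 1. So in case 1 exactly one choice works, since the two labels there are distinct.

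\textbf{Main obstacle.} The main difficulty is the bookkeeping of orientations: checking that the orientation reversal between $T$ and $C$ makes all four edges carry the same ordered pair $(a,b)$, with conjugation determined by parity. Once this is settled, the rest is the independence case analysis above. Note that the statement tacitly requires $\E(x_{i,j}^2)=0$, since without it a same-parity pairing with equal labels would contribute $|\E(x^2)|^2$.
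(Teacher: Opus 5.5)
Your proposal is correct and is essentially the paper's argument: factor $\E(x_i)$ over the classes of $\overline{\pi}$, handle each multiplicity-two class as in the double-tree case, and run the label case analysis on the four-block, with uniqueness of $\overline{\pi}'$ coming from the two label values being distinct (the paper's own list of non-vanishing cases also drops the odd/odd, even/even split without comment). Your closing remark overreaches, though: $\E(x_{1,2}^2)=0$ is sufficient but not needed, because in the double tree $T^\rho$ from which $T^\pi$ arises the multiplicity-two constraints chain the labels $r_k$ cyclically around each even vertex, and after removing the one or two pairs absorbed into the four-block, that block consists of one odd and one even edge from each of two label classes, so equal odd labels already force all four labels to coincide.
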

\begin{proof}
Let $\{u,v\}$ be a block of $\overline{\pi}$ which is also a block of $\overline{\pi}^\prime$. By Proposition \ref{Proposition: pi prime is non-crossing} we know $\overline{\pi}^\prime\in \NC_2(2q)$ and therefore $u$ and $v$ have distinct parity, say $u$ is even and $v$ is odd. Since $e_u$ and $e_v$ connect the same pair of vertices and with the same orientation, $x_{i_{sr(e_u)},i_{tr(e_u)}}^{(r_{\frac{u}{2}})}$ and $x_{i_{sr(e_v)},i_{tr(e_v)}}^{(r_{\frac{v+1}{2}})}$ have the same pair of indices. If we require $\E(x_i)\neq 0$ we must have $r_{\frac{u}{2}}=r_{\frac{v+1}{2}}$ so that the entries are not independent. Thus $u/2$ and $(v+1)/2$ are both in the same block of $ker(r)$ and therefore $u$ and $v$ are in the same block of $ker(rr)$. This proves any block of $\overline{\pi}$ of size $2$ is contained in a block of $ker(rr)$. Further, these blocks contribute a factor of the form $\E(x_{i,j}\overline{x_{i,j}})=\E|x_{i,j}|^2=1$ to $\E(x_i)$.

Let $\{u,v,w,x\}$ be the block of $\overline{\pi}$ of size $4$. We know $T^\pi$ is also a 2-4 tree, let us consider for now $e_i$ be the edges of $T$. By Remark \ref{Remark: Properties of double unicycle and 2-4 tree} we can assume $e_u,e_w$ have the same orientation and are both even while $e_v,e_x$ have the opposite orientation to $e_u$ and are both odd. We know $\overline{\pi}^\prime$ has blocks either $\{u,v\}\{w,x\}$ or $\{u,x\}\{v,w\}$. Since $e_u,e_v,e_w,e_x$ join the same pair of vertices, $\E(x_i)$ has a factor of the form
\begin{equation}\label{Aux 5}
\E\left(x_{i_{sr(e_v)},i_{tr(e_v)}}^{(r_{\frac{v+1}{2}})}\overline{x_{i_{sr(e_u)},i_{tr(e_u)}}^{(r_{\frac{u}{2}})}}x_{i_{sr(e_x)},i_{tr(e_x)}}^{(r_{\frac{x+1}{2}})}\overline{x_{i_{sr(e_w)},i_{tr(e_w)}}^{(r_{\frac{w}{2}})}}\right)
\end{equation}
If we now instead consider the edges of $C^\pi$, it only changes the orientation of the odd edges, so that now $e_u,e_v,e_w,e_x$ have all the same orientation and therefore (\ref{Aux 5}) becomes
\begin{equation}\label{Aux 6}
\E(x_{1,2}^{(r_{\frac{v+1}{2}})}\overline{x_{1,2}^{(r_{\frac{u}{2}})}}x_{1,2}^{(r_{\frac{x+1}{2}})}\overline{x_{1,2}^{(r_{\frac{w}{2}})}}).
\end{equation}
If we don't want this to become $0$ we either have
\begin{enumerate}
    \item $r_{\frac{v+1}{2}}=r_{\frac{u}{2}}=r_{\frac{x+1}{2}}=r_{\frac{w}{2}}$, or,
    \item $r_{\frac{v+1}{2}}=r_{\frac{u}{2}}$ and $r_{\frac{x+1}{2}}=r_{\frac{w}{2}}$ with $r_{\frac{v+1}{2}}\neq r_{\frac{x+1}{2}}$, or,
    \item $r_{\frac{v+1}{2}}=r_{\frac{w}{2}}$ and $r_{\frac{x+1}{2}}=r_{\frac{u}{2}}$ with $r_{\frac{v+1}{2}}\neq r_{\frac{x+1}{2}}$.
\end{enumerate}
In the first case we get $\frac{v+1}{2},\frac{u}{2},\frac{x+1}{2},\frac{w}{2}$ are all in the same block of $ker(r)$ and therefore $u,w,v,x$ are all in the same block of $ker(rr)$. Hence $\overline{\pi}\leq ker(rr)$, further in this case (\ref{Aux 6}) becomes $\E|x_{1,2}|^4$ so that $\E(x_i)=\E|x_{1,2}|^4$. This is precisely statement 2. Similarly, in the second case we get $u,v$ are in the same block of $ker(rr)$ while $w,x$ are also in the same block of $ker(rr)$ which is distinct to the block that contains $u,v$. This means that the choice $\overline{\pi}^\prime$ with blocks $\{u,v\}\{w,x\}$ is such that $\overline{\pi}^\prime \leq ker(rr)$. Moreover, the other choice of $\overline{\pi}^\prime$ with blocks $\{u,x\}\{v,w\}$ doesn't satisfy $\overline{\pi}^\prime \leq ker(rr)$. Finally in this case (\ref{Aux 6}) becomes $1$ so that $\E(x_i)=1$. This is precisely statement 1. The third case follows the same as previous, getting again the conclusion of statement 1.
\end{proof}

\begin{proposition}\label{Proposition: contribution of double unicycle of first type}
Let $M^{(r_1)},\dots, M^{(r_q)}$ be a collection of independent covariance matrices. Let $\pi\in \cP(2q)$ be such that $T^\pi$ is a double unicycle of first type. Let $i_1,\dots,i_{2q}$ be such that $ker(i)=\pi$. If $\E(x_i)\neq 0$, then $\overline{\pi}\leq ker(rr)$ and $\E(x_i)=1$.
\end{proposition}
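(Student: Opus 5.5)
The argument follows the proof of Proposition \ref{Proposition: contribution of double tree} almost verbatim, with Proposition \ref{Proposition: pi is non-crossing} supplying the pairing structure that Lemma \ref{Lemma: Second characterization of pi} supplied there. Since $T^\pi$ is a double unicycle of first type, Proposition \ref{Proposition: pi is non-crossing} gives $\overline{\pi}\in\NC_2(2q)$. Every edge of $\overline{T^\pi}$ therefore has multiplicity exactly $2$, and its two edges $e_u,e_v$ form a block $\{u,v\}$ of $\overline{\pi}$. Because $\overline{\pi}$ is a non-crossing pairing, $u$ and $v$ have opposite parity; I take $u$ even and $v$ odd.

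Next I record the orientation of each pair. In the first-type case, Remark \ref{Remark: Properties of double unicycle and 2-4 tree} says that every pair $e_u,e_v$ joining the same two vertices of $T^\pi$ has opposite orientations, for both off-cycle and on-cycle edges. Passing from $T$ to $C$ reverses exactly the odd edges, so in $C^\pi$ the edges $e_u$ and $e_v$ have the same orientation. Consequently the entry $x^{(r_{(v+1)/2})}_{i_{sr(e_v)},i_{tr(e_v)}}$ and the conjugated entry $\overline{x^{(r_{u/2})}_{i_{sr(e_u)},i_{tr(e_u)}}}$ carry the same ordered pair of indices.

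Then I use independence. Distinct blocks of $\overline{\pi}$ correspond to distinct edges of $\overline{C^\pi}$. Since $\ker(i)=\pi$, they give distinct index pairs, so their entries are independent and $\E(x_i)$ factors over the blocks. If $r_{u/2}\neq r_{(v+1)/2}$, the factor for $\{u,v\}$ is $\E(x)\E(\overline{y})=0$. Hence $\E(x_i)\neq0$ forces $r_{u/2}=r_{(v+1)/2}$ for every block, which says that $u$ and $v$ lie in the same block of $\ker(rr)$. This gives $\overline{\pi}\leq\ker(rr)$. In that case each factor equals $\E(x_{i,j}\overline{x_{i,j}})=\E|x_{i,j}|^2=1$, so $\E(x_i)=1$.

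The only delicate step is the orientation claim on the cycle edges, namely that each pair there is reversed in $T^\pi$ and hence aligned in $C^\pi$. This is exactly what "first type" provides through Remark \ref{Remark: Properties of double unicycle and 2-4 tree}. Combined with the parity fact from $\overline{\pi}\in\NC_2(2q)$, the rest is the same bookkeeping as in the double tree case. Note that the hypothesis $\E(x_{1,2}^2)=0$ is not needed here; it matters only for second-type unicycles, where aligned pairs in $T^\pi$ would produce factors $\E(x^2)$.
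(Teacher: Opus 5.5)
Your proof is correct and follows essentially the same route as the paper: $\overline{\pi}\in\NC_2(2q)$ gives opposite parity within each pair. The first-type property (reversed in $T^\pi$, hence aligned in $C^\pi$) turns each pair into a factor $\E\bigl(x^{(r)}\overline{x^{(r')}}\bigr)$ on a single index pair, and independence then forces $r=r'$ and makes every factor equal to $1$. Your explicit check that distinct blocks of $\overline{\pi}$ carry distinct index pairs, so that $\E(x_i)$ factors over the blocks, is a detail the paper leaves implicit.
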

\begin{proof}
Let $\{u,v\}$ be a block of $\overline{\pi}$, we assume $u$ is even and $v$ is odd as $\overline{\pi}\in \NC_2(2q)$. We know $e_u$ and $e_v$ join the same pair of vertices with opposite orientation and therefore these edges have the same orientation in $C^\pi$, so that $\E(x_i)$ have a factor of the form
$$\E(x_{i_{sr(e_v)},i_{tr(e_v)}}^{(r_{\frac{v+1}{2}})}\overline{x_{i_{sr(e_u)},i_{tr(e_u)}}^{(r_{\frac{u}{2}})}})=\E(x_{1,2}^{(r_{\frac{v+1}{2}})}\overline{x_{1,2}^{(r_{\frac{u}{2}})}}).$$
If we don't want this to become $0$ we must have $r_{\frac{v+1}{2}}=r_{\frac{u}{2}}$ so that $u,v$ are in the same block of $ker(rr)$, this proves $\overline{\pi}\leq ker(rr)$. Further, $\E(x_i)$ has only factors of the form $\E(x_{1,2}\overline{x_{1,2}})=1$ so that $\E(x_i)=1$. 
\end{proof}

\begin{proposition}\label{Proposition: contribution of double unicycle of second type}
Let $M^{(r_1)},\dots, M^{(r_q)}$ be a collection of independent covariance matrices such that the entries further satisfy $\E(x_{1,2}^2)=0$. Let $\pi\in \cP(2q)$ be such that $T^\pi$ is a double unicycle of second type. Let $i_1,\dots,i_{2q}$ be such that $ker(i)=\pi$. Then $\E(x_i)= 0$.
\end{proposition}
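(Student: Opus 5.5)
The idea is to find one edge of the cycle of $\overline{T^\pi}$ whose two parallel edges contribute a factor $\E(x^2)$ or $\E(\overline{x}^2)$ to $\E(x_i)$. Independence then makes the whole expectation vanish.

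First, take an edge $\overline{e}$ of $\overline{T^\pi}$ lying on its unique cycle. Since $T^\pi$ is a double unicycle, $\overline{e}$ has multiplicity exactly $2$, so it consists of two edges $e_u,e_v$ of $T$. By the definition of the second type in Remark~\ref{Remark: Properties of double unicycle and 2-4 tree}, $e_u$ and $e_v$ have the same orientation in $T^\pi$.

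Next I would show that $u$ and $v$ have the same parity. The quotient graph is traversed by the closed walk $e_{2q},e_{2q-1},\dots,e_1$, and consecutive edges alternate parity. Going around the cycle of $\overline{T^\pi}$ in a fixed direction, the walk enters and leaves each cycle vertex the same number of times along cycle edges. Because both copies of each cycle edge point the same way, the walk passes around the cycle coherently. At a cycle vertex $A$, write $e_a$ for an incoming copy from the previous cycle edge and $e_b$ for an outgoing copy to the next one. The excursions of the walk into the trees hanging at $A$ are closed sub-walks with an even number of edges. So the parity of the edge label changes by exactly one step each time the walk moves from one cycle edge to the next. The two copies over a single cycle edge are then visited in two passes around the cycle. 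Each pass has length equal to (cycle length) plus an even number, and its parity is governed by the total count $2q$.

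This parity step is the main obstacle. What I would try first is a cleaner argument: the edges in $T$ alternate orientation type, in the sense that odd edges of $T$ are exactly those whose orientation gets flipped in $C$. Hence two edges of $T$ with the same $T$-orientation but opposite parity have opposite orientations in $C$. If I can show that $u,v$ have opposite parity, then $e_u,e_v$ have opposite orientation in $C^\pi$. In that case the factor is $\E(x_{a,b}\overline{x_{b,a}})$, and since $x_{a,b}$ and $x_{b,a}$ are independent, the relevant indices are $i_{sr},i_{tr}$ with one index a row index in $[n]$ and the other a column index in $[p]$. Such a term would in fact vanish because an odd-index vertex and an even-index vertex could not be identified. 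To avoid this I need to check carefully that every vertex of $T^\pi$ contains only odd or only even labels. If a merged vertex mixes parities, the matrix-entry structure forces a specific form.

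I expect the parity bookkeeping to resolve as follows. Along the cycle, each cycle vertex alternates between being an odd block and an even block, which is forced because every edge of $C$ joins an odd vertex to an even one. Then every $C$-edge runs from odd to even. With $e_u$ and $e_v$ parallel in $T^\pi$ with the same $T$-orientation, the correct parity is forced to be the same: the two $T$-orientations agreeing means their $C$-orientations are reversed relative to each other exactly when the parities differ. Combined with the convention that $C$ always points odd$\to$even, so that $C$-orientations must agree, this forces $u,v$ to have the same parity. The factor attached to $\overline{e}$ is then $\E\bigl(x^{(r)}_{s,t}x^{(r')}_{s,t}\bigr)$ when both are odd, or its conjugate when both are even. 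This equals $\E(x_{1,2}^2)=0$ if $r=r'$, and $\E(x)\E(x)=0$ if $r\neq r'$. Independence of distinct edge classes then factorizes $\E(x_i)$, so $\E(x_i)=0$.
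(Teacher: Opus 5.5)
Your argument depends on showing that the two parallel copies $e_u,e_v$ over a cycle edge always have the same parity. You derive this from the claim that every block of $\pi$ contains only odd or only even labels, so that $C^\pi$ is bipartite with all edges pointing odd$\to$even. That claim is false. $\pi$ is an arbitrary partition of $[2q]$, and blocks mixing parities are allowed: their common index simply ranges over $[\min\{n,p\}]$, as in the set $\mathcal{M}$ in Proposition~\ref{Proposition: N order of pi sum}.

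For example, take $q=3$ and $\pi=\{\{1,4\},\{2,5\},\{3,6\}\}$. Then $T^\pi$ is a triangle whose three edges are doubled by $e_1,e_4$, by $e_2,e_5$ and by $e_3,e_6$, each pair having the same orientation. This is a double unicycle of second type in which every block mixes parities and every cycle pair has opposite parity. Your own pass-counting sketch actually shows the general rule: one pass around the cycle has length (cycle length) $+$ (even), so $u,v$ have the same parity exactly when the cycle has even length. Hence the step ``this forces $u,v$ to have the same parity'' fails for odd cycles, and the proposal as written does not prove the statement.

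The missing case is easy, and you wrote it down before discarding it. Suppose $u,v$ have opposite parity. Then $e_u,e_v$ have opposite orientations in $C^\pi$. Write $A\neq B$ for the endpoint blocks and $a\neq b$ for the corresponding values of $i$; these are distinct because $\ker(i)=\pi$. The associated factor is then $\E\bigl(x^{(r)}_{a,b}\,\overline{x^{(r')}_{b,a}}\bigr)$.
\begin{itemize}
\item The entries $x_{a,b}$ and $x_{b,a}$ are different, hence independent whether or not $r=r'$.
\item They occur nowhere else in the product, because the edge has multiplicity exactly $2$.
\item They are centered.
\end{itemize}
So $\E(x_i)=0$ in this case, without even using $\E(x_{1,2}^2)=0$. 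Adding this case to your same-parity case, which is correct as written, gives exactly the paper's proof.
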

\begin{proof}
Let $\{u,v\}\in\overline{\pi}$ be a block such that $\overline{e_u}$ is an edge of the cycle of $\overline{T^\pi}$. In this case either $u$ and $v$ have the same parity or distinct parity. If they have distinct parity then the edges $e_u$ and $e_v$ of $C^\pi$ have now the opposite orientation (they have the same orientation in $T^\pi$ and odd edges have opposite orientation in $T$ to odd edges in $C$). Thus $\E(x_i)$ has a factor of the form $\E(x_{1,2}\overline{x_{2,1}})=\E(x_{1,2})\E(\overline{x_{2,1}})=0$ by independence (it doesn't matter if variables come from the same covariance matrix, they are still independent, so we omit the superscript). Thus $\E(x_i)=0$. If they have the same parity and $u$ and $v$ are both even then the adjacent edges within the cycle of $T^\pi$ must be both odd. So let us assume $u$ and $v$ are both odd. In this case the corresponding edges $e_u$ and $e_v$ of $C^\pi$ have still the same orientation and therefore $\E(x_i)$ has a factor of the form
$$\E(x_{1,2}^{(r_{\frac{u+1}{2}})}x_{1,2}^{(r_{\frac{v+1}{2}})}).$$
The last quantity is either $\E(x_{1,2}^2)=0$ or $\E(x_{1,2})\E(x_{1,2})=0$ depending on whether $r_{\frac{u+1}{2}}=r_{\frac{v+1}{2}}$. In either case we get $\E(x_i)=0$.
\end{proof}

In order to find the coefficient of order $\frac{1}{n}$, we must recover some previous work done in Section \ref{Section: Moments}. For instance, recall Equation (\ref{Aux 4})
\begin{equation*}
\frac{1}{n}\E\circ \Tr(M^{(r_1)}\cdots M^{(r_q)})=\frac{1}{n^{q+1}}\sum_{\substack{\sigma\in \NC(\{2,\dots,2q\}) \\ \pi=\sigma\cup Kr(\sigma) \\ \sigma/2\leq ker(r)}}\sum_{\substack{i_{odd}=1,\dots,n \\ i_{even}=1,\dots,p \\ ker(i)=\pi}}1+O(\frac{1}{n}).
\end{equation*}
In Section \ref{Section: Moments} we use the fact that
$$\sum_{\substack{i_{odd}=1,\dots,n \\ i_{even}=1,\dots,p \\ ker(i)=\pi}}
1=(p)_{\#(\sigma)}(n)_{\#(Kr(\sigma))}+O(n^{\#(\sigma)+\#(Kr(\sigma))-1}).$$
Now, we care about the term with order $O(n^{\#(\sigma)+\#(Kr(\sigma))-1})$. This motivates the following proposition whose proof uses the same arguments as those used in Proposition \ref{Proposition: N order of pi sum} and therefore we omit it.

\begin{proposition}\label{Proposition: Order of sum when pi=q+1}
Let $\sigma\in \NC(\{2,\dots,2q\})$ and $\pi=\sigma\cup Kr(\sigma)$. Then
$$\sum_{\substack{i_{odd}=1,\dots,n \\ i_{even}=1,\dots,p \\ ker(i)=\pi}}1=(p)_{\#(\sigma)}(n)_{\#(Kr(\sigma))}-\sum_{\rho\in L_\pi}\sum_{\substack{i_{odd}=1,\dots,n \\ i_{even}=1,\dots,p \\ ker(i)=\rho}}1+O(n^{\#(\sigma)+\#(Kr(\sigma))-2}).$$
Here 
\begin{multline*}
L_\pi=\{\rho\in \cP(2q): \pi\leq\rho, \#(\rho)=\#(\pi)-1, \text{ and every block of }\rho\text{ is a block of }\pi \\\text{ except }\text{one block which is the union of two blocks of }\pi,A\text{ and }B \\
\text{ with } A\in \sigma,B\in Kr(\sigma)\}.
\end{multline*}
\end{proposition}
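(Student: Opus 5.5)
We will use inclusion–exclusion, the same device as in the proof of Proposition~\ref{Proposition: N order of pi sum}, and then discard everything beyond the first correction by a block count. Write $\mathcal{E}=\sigma$ for the blocks of $\pi$ contained in the even labels and $\mathcal{O}=Kr(\sigma)$ for those contained in the odd labels. Because $\pi=\sigma\cup Kr(\sigma)$ has no mixed blocks, $\mathcal{M}=\emptyset$. The product $(p)_{\#(\sigma)}(n)_{\#(Kr(\sigma))}$ counts index tuples $i$ with the following three properties:
\begin{itemize}
\item even positions take values in $[p]$ and odd positions take values in $[n]$;
\item indices in the same block of $\pi$ coincide;
\item indices in distinct blocks of $\sigma$ differ, and indices in distinct blocks of $Kr(\sigma)$ differ.
\end{itemize}
An even index and an odd index are, however, allowed to coincide. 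Every such tuple $i$ therefore has $\ker(i)=\rho$ for a unique $\rho\geq\pi$ obtained by merging blocks of $\pi$ in a matching fashion: each block of $\rho$ is either a single block of $\pi$ or a union $A\cup B$ with $A\in\sigma$ and $B\in Kr(\sigma)$. Any such $\rho$ can occur, since a common value must lie in $[\min\{n,p\}]$. This gives the exact identity
$$(p)_{\#(\sigma)}(n)_{\#(Kr(\sigma))}=\sum_{\rho\in \widetilde L_\pi}\ \sum_{\substack{i_{odd}=1,\dots,n \\ i_{even}=1,\dots,p \\ ker(i)=\rho}}1,$$
where $\widetilde L_\pi$ is the set of all partitions obtained from $\pi$ by merging a family of disjoint pairs $(A,B)$ with $A\in\sigma$ and $B\in Kr(\sigma)$; it includes $\rho=\pi$, which uses no pairs.

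Next we split $\widetilde L_\pi$ according to the number $k$ of merged pairs. The case $k=0$ is $\rho=\pi$, which gives the left-hand side of the statement. The case $k=1$ gives exactly $L_\pi$. For every $\rho$ with $k\geq 2$ we have $\#(\rho)=\#(\pi)-k\leq \#(\sigma)+\#(Kr(\sigma))-2$. Proposition~\ref{Proposition: N order of pi sum}, applied with $\E(x_i)$ replaced by $1$ (its proof reduces to exactly this count), bounds each such inner sum by $O(n^{\#(\rho)})=O(n^{\#(\sigma)+\#(Kr(\sigma))-2})$. There are finitely many $\rho$, a number depending only on $q$. Rearranging the identity then gives the claimed formula.

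The main obstacle is the first step, namely justifying that the correspondence is exact, i.e.\ that every tuple counted by the falling factorial has a kernel of the stated matched form and is counted exactly once. Two observations settle this. Within the even labels, equalities are already excluded except inside blocks of $\sigma$, and likewise within the odd labels. Consequently each $\sigma$-block can coincide with at most one $Kr(\sigma)$-block, since two odd blocks sharing a value with the same even block would have equal values, which is forbidden. Hence the coincidences always form a matching, and the kernel determines which term of the sum the tuple belongs to. With this in hand the remaining steps are routine bookkeeping.
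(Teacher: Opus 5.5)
Your proposal is correct and is essentially the argument the paper intends when it omits the proof. You specialize the inclusion--exclusion identity from Proposition~\ref{Proposition: N order of pi sum} to the case $\mathcal{M}=\emptyset$, keep the single-merge partitions as $L_\pi$, and absorb every partition with at least two merges into the $O(n^{\#(\sigma)+\#(Kr(\sigma))-2})$ error via the block-count bound. Your explicit check that the coincidences between $\sigma$-blocks and $Kr(\sigma)$-blocks form a matching is what makes that identity exact.
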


\begin{proposition}\label{Proposition: Infinitesimal contribution pi=q+1}
The following equation satisfies,
\begin{multline*}
\frac{1}{n^{q+1}}\sum_{\substack{\pi\in \cP(2q)\\ \overline{C^\pi} \text{ has no edges} \\\text{of multiplicity }1 \\ \#(\pi)=q+1}}\sum_{\substack{i_{odd}=1,\dots,n \\ i_{even}=1,\dots,p \\ ker(i)=\pi}}\E(x_i)= 
\frac{1}{n^{q+1}}\sum_{\substack{\sigma\in \NC(\{2,\dots,2q\}) \\ \pi=\sigma\cup Kr(\sigma) \\ \sigma/2\leq ker(r)}}\Bigg((p)_{\#(\sigma)}(n)_{\#(Kr(\sigma))}-\sum_{\rho\in L_\pi}\sum_{\substack{i_{odd}=1,\dots,n \\ i_{even}=1,\dots,p \\ ker(i)=\rho}}1\Bigg)+O(\frac{1}{n^2}).
\end{multline*}
Here $L_\pi$ is defined as in Proposition \ref{Proposition: Order of sum when pi=q+1}.
\end{proposition}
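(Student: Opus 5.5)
By Lemma \ref{Lemma: Second characterization of pi}, applied to $C$ (which coincides with $T=T_{2q}$ up to orientation), the partitions $\pi\in\cP(2q)$ with $\#(\pi)=q+1$ and no edge of multiplicity $1$ in $\overline{C^\pi}$ are exactly those of the form $\pi=\gamma\overline{\pi}$ with $\overline{\pi}\in\NC_2(2q)$. The plan is therefore to rewrite the left-hand side as a sum over such $\pi$ and then handle the inner index sums.

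First I reduce the expectations. By Proposition \ref{Proposition: contribution of double tree}, for every $i$ with $ker(i)=\pi$ the value $\E(x_i)$ is either $0$ or $1$. It equals $1$ exactly when $\overline{\pi}\le ker(rr)$: independence of distinct labels forces that condition, and conversely, if it holds, each paired factor is $\E|x|^2=1$. This condition depends only on $\pi$ and not on the particular $i$. So the left-hand side equals
\[
\frac{1}{n^{q+1}}\sum_{\substack{\overline{\pi}\in\NC_2(2q)\\ \overline{\pi}\le ker(rr)}}\ \sum_{ker(i)=\gamma\overline{\pi}}1 .
\]
The identity here is exact, with no error term.

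Next I reparametrize, exactly as in the proof of Theorem \ref{Lemma: Expansion of moments 4}. Every $\pi=\gamma\overline{\pi}$ separates parities, so it has the form $\sigma\cup Kr(\sigma)$ with $\sigma\in\NC(\{2,\dots,2q\})$. The correspondence $\overline{\pi}\leftrightarrow\sigma$ is a bijection: one direction comes from Remark \ref{Remark: Relation double tree and non-crossing pairings}, and the other from the fact that $\pi$ determines $\overline{\pi}=\gamma^{-1}\pi$. By Proposition \ref{Proposition: Equivalence of ker(r) and Ker(rr)}, the condition $\overline{\pi}\le ker(rr)$ becomes $\sigma/2\le ker(r)$. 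This produces the outer sum of the right-hand side.

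Finally, for each such $\pi$ I substitute the counting identity of Proposition \ref{Proposition: Order of sum when pi=q+1}. Its error term is $O(n^{\#(\sigma)+\#(Kr(\sigma))-2})=O(n^{q-1})$, because $\#(\sigma)+\#(Kr(\sigma))=q+1$. After dividing by $n^{q+1}$ this becomes $O(n^{-2})$. The number of $\sigma$ is finite, depending only on $q$, so summing these errors still gives $O(1/n^2)$.

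I expect the main obstacle to be justifying Proposition \ref{Proposition: Order of sum when pi=q+1} itself, since the paper omits its proof. I would run the inclusion–exclusion from Proposition \ref{Proposition: N order of pi sum} once more, taking care over one point. In $\pi=\sigma\cup Kr(\sigma)$ the class $\mathcal M$ is empty, so the product $(p)_{\#(\sigma)}(n)_{\#(Kr(\sigma))}$ counts exactly the index choices in which indices within $\sigma$ are distinct and indices within $Kr(\sigma)$ are distinct. The overcount therefore consists of the index choices whose kernel merges at least one even block with an odd block. Those merging exactly one such pair are precisely the $\rho\in L_\pi$, and they are counted exactly by the subtracted sums. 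Choices merging two or more pairs have kernel with at most $\#(\pi)-2$ blocks, so by Proposition \ref{Proposition: N order of pi sum} they contribute $O(n^{q-1})$. One subtlety is that such configurations are subtracted with the wrong multiplicity. This affects only the $O(n^{q-1})$ terms, so it is harmless.
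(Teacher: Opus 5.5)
Your proposal is correct and is essentially the paper's proof: Proposition~\ref{Proposition: contribution of double tree} turns $\E(x_i)$ into the indicator of $\overline{\pi}\le ker(rr)$, Proposition~\ref{Proposition: Equivalence of ker(r) and Ker(rr)} converts this to $\sigma/2\le ker(r)$ as in the proof of Theorem~\ref{Lemma: Expansion of moments 4}, and Proposition~\ref{Proposition: Order of sum when pi=q+1} is then substituted with an $O(n^{q-1})$ error per $\sigma$. Your reconstruction of the omitted proof of that counting proposition is also right; note only that each index tuple has a unique kernel, so the $L_\pi$ terms involve no overcounting, and tuples with two or more even--odd merges are simply not subtracted, which is exactly what the $O(n^{q-1})$ remainder absorbs.
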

\begin{proof}
From the same arguments used in Theorem \ref{Lemma: Expansion of moments 4} we get that
\begin{equation*}
\frac{1}{n^{q+1}}\sum_{\substack{\pi\in \cP(2q)\\ \overline{C^\pi} \text{ has no edges} \\\text{of multiplicity }1 \\ \#(\pi)=q+1}}\sum_{\substack{i_{odd}=1,\dots,n \\ i_{even}=1,\dots,p \\ ker(i)=\pi}}\E(x_i)=
\frac{1}{n^{q+1}}\sum_{\substack{\sigma\in \NC(\{2,\dots,2q\}) \\ \pi=\sigma\cup Kr(\sigma) \\ \sigma/2\leq ker(r)}}\sum_{\substack{i_{odd}=1,\dots,n \\ i_{even}=1,\dots,p \\ ker(i)=\pi}}1.
\end{equation*}
Combining last equation with Proposition \ref{Proposition: Order of sum when pi=q+1} yields the desired result.
\end{proof}

Now we do something similar in the case when $\#(\pi)=q$. To make this easier for the reader we divide this into three cases; 2-4 tree, double unicycle of first type and double unicycle of second type. Let us get started with 2-4 tree graphs.

Recall that if $\pi$ is such that $T^\pi$ is a 2-4 tree, then there exist $\rho$ such that $T^\rho$ is a double tree and $T^\pi$ results of joining two vertices of $T^\rho$ that are at distance 2. This motivates the following notation.

\begin{notation}\label{Notation: Distance in double tree}
Let $\rho\in \cP(2q)$ be such that $T^\rho$ is a double tree. 
\begin{enumerate}
    \item Given any two blocks $A,B\in \rho$ that correspond to vertices of $\overline{T^\rho}$, there is a unique path from $A$ to $B$ in $\overline{T^\rho}$. We define the distance from $A$ to $B$ as the length of this path. We denote $d_{T^\rho}(A,B)$ such a distance.
    \item Given $A,B\in \rho$ such that $d_{T^\rho}(A,B)=2$, we let $\overline{e_a}$ and $\overline{e_b}$ be the unique edges of $\overline{T^\rho}$ that correspond to the path from $A$ to $B$ in $\overline{T^\rho}$. We can suppose without loss of generality $a,b$ are both even with $e_a$ and $e_b$ being adjacent to $A$ and $B$ respectively.
    \item Given $A,B\in \rho$, we let $\rho_{A,B}\in \cP(2q)$ be the partition whose blocks are the blocks of $\rho$ except $A$ and $B$ which become the block $A\cup B$ of $\rho_{A,B}$. 

For instance one can see Example on page 6, before Theorem \ref{Lemma: Expansion of moments 5}. 
\end{enumerate}
\end{notation}

\begin{proposition}\label{Proposition: Infinitesimal contribution pi=q for 2-4 tree}
The following equation satisfies,
\begin{multline*}
\frac{1}{n^{q+1}}\sum_{\substack{\pi\in \cP(2q)\\ T^\pi \text{is a 2-4 tree} }}\sum_{\substack{i_{odd}=1,\dots,n \\ i_{even}=1,\dots,p \\ ker(i)=\pi}}\E(x_i)= 
\sum_{\substack{\sigma\in \NC(2,\dots,2q) \\ \sigma/2\leq ker(r) \\ \rho=\sigma\cup Kr(\sigma)}}\frac{1}{n}\left(\frac{p}{n}\right)^{\#(\sigma)-1}\Bigg(\sum_{\substack{A,B\in \sigma \\ d_{T^\rho}(A,B)=2}}\alpha_{a,b}+\sum_{\substack{A,B\in Kr(\sigma) \\ d_{T^\rho}(A,B)=2}}\frac{p}{n}\alpha_{a,b}\Bigg) + O(\frac{1}{n^2}).
\end{multline*}
Here $\alpha_{a,b}=\frac{1}{2}\E|x_{1,2}|^4 \I_{r_{\frac{a}{2}}=r_{\frac{b}{2}}}+\I_{r_{\frac{a}{2}}\neq r_{\frac{b}{2}}}$, where $a,b$ depend on $A,B$ and are defined as in Notation \ref{Notation: Distance in double tree}.
\end{proposition}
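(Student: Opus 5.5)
We will parametrize the 2-4 trees by double trees via Lemma \ref{Lemma: Characterization of 2-4 tree}. Every $\pi$ with $T^\pi$ a 2-4 tree has the form $\pi=\rho_{A,B}$, where $T^\rho$ is a double tree and $d_{T^\rho}(A,B)=2$. By Lemma \ref{Lemma: Second characterization of pi} and the discussion in the proof of Theorem \ref{Lemma: Expansion of moments 4}, such $\rho$ are exactly $\rho=\sigma\cup Kr(\sigma)$ with $\sigma\in\NC(\{2,\dots,2q\})$. Each block of $\rho$ lies entirely in the even or the odd integers, and adjacent vertices of the tree have opposite parity types, since each edge joins an even index $2u$ to $2u\pm1$. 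Hence two vertices at distance two are either both blocks of $\sigma$ or both blocks of $Kr(\sigma)$.

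The first step is to check that the map $(\rho,\{A,B\})\mapsto\rho_{A,B}$ is exactly two-to-one onto 2-4 trees. The two preimages of $\pi$ correspond to the two admissible choices of $\overline{\pi}^\prime$ in Notation \ref{Notation: New pi of 2-4 tree}: each choice gives $\rho=\gamma\overline{\pi}^\prime$ and a pair $A,B$ whose merging yields $\pi$. These are the only possibilities, since $\rho\le\pi$ forces $\overline{\rho}$ to split the 4-block of $\overline{\pi}$ into two oppositely oriented pairs. Consequently the sum over 2-4 trees $\pi$ equals one half of the sum over triples $(\sigma,A,B)$. This is where the factor $\tfrac12$ in front of $\E|x_{1,2}|^4$ should come from.

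Next we evaluate $\E(x_i)$ using Proposition \ref{Proposition: contribution of 2-4 tree}. Let $e_a,e_b$ be the two even edges on the path from $A$ to $B$ in $T^\rho$. Their odd partners under $\overline{\rho}$ carry the same labels $r_{a/2}$ and $r_{b/2}$, because $\overline{\rho}\le ker(rr)$. If $r_{a/2}=r_{b/2}$, then $\overline{\pi}\le ker(rr)$ and $\E(x_i)=\E|x_{1,2}|^4$, and both preimages contribute, giving $\frac12\E|x_{1,2}|^4\cdot2$ per $\pi$. After halving, each triple contributes $\frac12\E|x_{1,2}|^4$. If $r_{a/2}\ne r_{b/2}$, exactly one choice of $\overline{\pi}^\prime$ satisfies $\overline{\pi}^\prime\le ker(rr)$ and $\E(x_i)=1$. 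So exactly one of the two preimage triples is admissible in the condition $\sigma/2\le ker(r)$, and it contributes $1$. Summing over preimages, the sum over $\pi$ equals
\[
\sum_{\sigma/2\le ker(r)}\ \sum_{A,B}\alpha_{a,b}\cdot(\text{index count for }\rho_{A,B}).
\]
We expect the main obstacle to be this bookkeeping. One has to verify that in the unequal-label case the admissible triple is precisely counted once with weight $1$. One also has to verify, via Proposition \ref{Proposition: Equivalence of ker(r) and Ker(rr)}, that the condition $\sigma/2\le ker(r)$ for the admissible $\rho$ is equivalent to nonvanishing. In the equal-label case the same check is needed for both $\rho$ simultaneously.

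Finally, we count indices with $ker(i)=\rho_{A,B}$ at leading order, as in Proposition \ref{Proposition: N order of pi sum}. If $A,B\in\sigma$, the count is $p^{\#(\sigma)-1}n^{\#(Kr(\sigma))}(1+O(1/n))$. If $A,B\in Kr(\sigma)$, it is $p^{\#(\sigma)}n^{\#(Kr(\sigma))-1}(1+O(1/n))$. Dividing by $n^{q+1}$ and using $\#(\sigma)+\#(Kr(\sigma))=q+1$ gives $\frac1n(p/n)^{\#(\sigma)-1}$ and $\frac1n(p/n)^{\#(\sigma)}$ respectively. This yields the stated formula with error $O(1/n^2)$.
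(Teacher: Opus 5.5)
Your proposal is correct and follows essentially the same route as the paper. You write each 2-4 tree as $\rho_{A,B}$ with $T^\rho$ a double tree (Lemma~\ref{Lemma: Characterization of 2-4 tree}), use Proposition~\ref{Proposition: contribution of 2-4 tree} to assign weight $\frac12\E|x_{1,2}|^4$ to each of the two admissible $\overline{\rho}$ when $r_{a/2}=r_{b/2}$ and weight $1$ to the single admissible one otherwise, and then count indices with $ker(i)=\rho_{A,B}$ separately for $A,B\in\sigma$ and $A,B\in Kr(\sigma)$ — your two-to-one description of $(\rho,\{A,B\})\mapsto\rho_{A,B}$ (splitting either endpoint of the multiplicity-four edge) is just a more explicit phrasing of the paper's bookkeeping.
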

\begin{proof}
Let $\pi\in \cP(2q)$ be such that $T^\pi$ is a 2-4 tree. On the one hand, by Lemma \ref{Lemma: Characterization of 2-4 tree}, $T^\pi$ can be obtained from merging two vertices at distance $2$ of $T^\rho$ where $T^\rho$ is a double tree. Moreover $\rho=\gamma\overline{\rho}$ with $\overline{\rho}=\overline{\pi}^\prime \in \NC_2(2q)$. Conversely, merging two vertices at distance $2$ from a double tree produces a 2-4 tree. 
On the other hand, if $i_1,\dots,i_{2q}$ is such that $ker(i)=\pi$ and $\E(x_i)\neq 0$. From Proposition \ref{Proposition: contribution of 2-4 tree} we know either $\overline{\pi}^\prime \leq ker(rr)$ and $\E(x_i)=1$ or $\overline{\pi}\leq ker(rr)$ and $\E(x_i)=\E|x_{1,2}|^4$. In the first case we get $\overline{\rho}=\overline{\pi}^\prime \leq ker(rr)$, further, there is only one $\overline{\pi}^\prime$ that satisfies this. In the second case we get $\overline{\rho}=\overline{\pi}^\prime \leq ker(rr)$ for the two choices of $\overline{\pi}^\prime$. Thus we can rewrite the expression
$$\sum_{\substack{\pi\in \cP(2q)\\ T^\pi \text{is a 2-4 tree} }}\sum_{\substack{i_{odd}=1,\dots,n \\ i_{even}=1,\dots,p \\ ker(i)=\pi}}\E(x_i),$$
as a sum indexed by $\overline{\rho}\in \NC_2(2q)$ with $\overline{\rho}\leq ker(rr)$, $\rho=\gamma\overline{\rho}$ and $\pi=\rho_{A,B}$ for two vertices $A,B$ at distance two in $T^\rho$. We just need to consider two cases. If $e_a,e_b$ are such that $r_{\frac{a}{2}}=r_{\frac{b}{2}}$, then merging the vertices produces a 2-4 tree where $\overline{\pi}\leq ker(rr)$ and $\E(x_i)=\E|x_{1,2}|^4$. In this case there is another $\overline{\rho} \leq ker(rr)$ that also produces the same $\pi$ and hence we consider both pairings with a contribution of $\frac{1}{2}\E|x_{1,2}|^4$ each. If $e_a,e_b$ are such that $r_{\frac{a}{2}} \neq r_{\frac{b}{2}}$ then merging the vertices produces a 2-4 tree where $\overline{\pi}^\prime \leq ker(rr)$ and $\E(x_i)=1$. In this case there is only one $\overline{\rho} \leq ker(rr)$. Hence,
\begin{equation*}
\sum_{\substack{\pi\in \cP(2q)\\ T^\pi \text{is a 2-4 tree} }}\sum_{\substack{i_{odd}=1,\dots,n \\ i_{even}=1,\dots,p \\ ker(i)=\pi}}\E(x_i)= \sum_{\substack{\overline{\rho}\in \NC_2(2q) \\ \rho=\gamma\overline{\rho} \\ \overline{\rho}\leq ker(rr)}}\sum_{\substack{A,B\in\rho \\ d_{T^\rho}(A,B)=2}}\sum_{\substack{i_{odd}=1,\dots,n \\ i_{even}=1,\dots,p \\ ker(i)=\rho_{A,B}}}\alpha_{a,b}.
\end{equation*}
We now appeal to the same arguments used in Theorem \ref{Lemma: Expansion of moments 4}. $\rho$ can be written as $\rho=\sigma\cup Kr(\sigma)$ with $\sigma\in \NC(\{2,\dots,2q\})$, further from Proposition \ref{Proposition: Equivalence of ker(r) and Ker(rr)} the condition $\overline{\rho}\leq ker(rr)$ becomes $\sigma/2 \leq ker(r)$. Thus
\begin{equation}\label{Aux 8}
\sum_{\substack{\pi\in \cP(2q)\\ T^\pi \text{is a 2-4 tree} }}\sum_{\substack{i_{odd}=1,\dots,n \\ i_{even}=1,\dots,p \\ ker(i)=\pi}}\E(x_i)= \sum_{\substack{\sigma\in \NC(\{2,\dots,2q\}) \\ \rho=\sigma\cup Kr(\sigma) \\ \sigma/2\leq ker(r)}}\sum_{\substack{A,B\in\rho \\ d_{T^\rho}(A,B)=2}}\sum_{\substack{i_{odd}=1,\dots,n \\ i_{even}=1,\dots,p \\ ker(i)=\rho_{A,B}}}\alpha_{a,b}.
\end{equation}
Note that if $A,B\in \rho$ are such that $d_{T^\rho}(A,B)=2$ then either $A,B\in \sigma$ or $A,B\in Kr(\sigma)$ because edges of $T$ always go from even to odd numbers or vice versa. Hence we can simplify the inner sum in the right hand side of Equation \ref{Aux 8} as follows
\begin{eqnarray*}
&&\sum_{\substack{A,B\in \rho \\ d_{T^\rho}(A,B)=2}}\sum_{\substack{i_{odd}=1,\dots,n \\ i_{even}=1,\dots,p \\ ker(i)=\rho_{A,B}}}\alpha_{a,b} \\
&=& \sum_{\substack{A,B\in \sigma \\ d_{T^\rho}(A,B)=2}}\sum_{\substack{i_{odd}=1,\dots,n \\ i_{even}=1,\dots,p \\ ker(i)=\rho_{A,B}}}\alpha_{a,b} + \sum_{\substack{A,B\in Kr(\sigma) \\ d_{T^\rho}(A,B)=2}}\sum_{\substack{i_{odd}=1,\dots,n \\ i_{even}=1,\dots,p \\ ker(i)=\rho_{A,B}}}\alpha_{a,b} \\
&=& \sum_{\substack{A,B\in \sigma \\ d_{T^\rho}(A,B)=2}}\alpha_{a,b}p^{\#(\sigma)-1}n^{\#(Kr(\sigma))} + \sum_{\substack{A,B\in Kr(\sigma) \\ d_{T^\rho}(A,B)=2}}\alpha_{a,b}p^{\#(\sigma)}n^{\#(Kr(\sigma))-1}+O(n^{q-1}), 
\end{eqnarray*}
where the last equality follow from
\begin{eqnarray*}
\sum_{\substack{i_{odd}=1,\dots,n \\ i_{even}=1,\dots,p \\ ker(i)=\rho_{A,B}}}1 &=& (p)_{\#(\sigma)-1}(n)_{\#(Kr(\sigma))}+O(n^{\#(\sigma)+\#(Kr(\sigma))-2}) \\
&=& p^{\#(\sigma)-1}n^{\#(Kr(\sigma))}+O(n^{q-1})
\end{eqnarray*}
whenever $A,B\in \sigma$. A similar statement holds when $A,B\in Kr(\sigma)$. Substituting this into Equation \ref{Aux 8} yields
\begin{multline}
\sum_{\substack{\pi\in \cP(2q)\\ T^\pi \text{is a 2-4 tree} }}\sum_{\substack{i_{odd}=1,\dots,n \\ i_{even}=1,\dots,p \\ ker(i)=\pi}}\E(x_i)= \\
\sum_{\substack{\sigma\in \NC(\{2,\dots,2q\}) \\ \sigma/2\leq ker(r) \\ \rho=\sigma\cup Kr(\sigma)}}\Bigg(\sum_{\substack{A,B\in \sigma \\ d_{T^\rho}(A,B)=2}}\alpha_{a,b}p^{\#(\sigma)-1}n^{\#(Kr(\sigma))} + \sum_{\substack{A,B\in Kr(\sigma) \\ d_{T^\rho}(A,B)=2}}\alpha_{a,b}p^{\#(\sigma)}n^{\#(Kr(\sigma))-1}+O(n^{q-1})\Bigg)
\end{multline}
Thus, we obtain
\begin{align*}
&\frac{1}{n^{q+1}}\sum_{\substack{\pi\in \cP(2q)\\ T^\pi \text{is a 2-4 tree} }}\sum_{\substack{i_{odd}=1,\dots,n \\ i_{even}=1,\dots,p \\ ker(i)=\pi}}\E(x_i) \\ 
&= \sum_{\substack{\sigma\in \NC(\{2,\dots,2q\}) \\ \sigma/2\leq ker(r) \\ \rho=\sigma\cup Kr(\sigma)}}\Bigg(\sum_{\substack{A,B\in \sigma \\ d_{T^\rho}(A,B)=2}}\alpha_{a,b}\frac{1}{n}\left(\frac{p}{n}\right)^{\#(\sigma)-1} + \sum_{\substack{A,B\in Kr(\sigma) \\ d_{T^\rho}(A,B)=2}}\alpha_{a,b}\frac{1}{n}\left(\frac{p}{n}\right)^{\#(\sigma)}\Bigg)+O(\frac{1}{n^2}),
\end{align*}
which simplifies to the desired result.
\end{proof}

Let us now look at the case when $T^\pi$ is a double unicycle of first type.

\begin{proposition}\label{Proposition: Infinitesimal contribution pi=q for double unicycle first type}
The following equation satisfies,
\begin{multline*}
\frac{1}{n^{q+1}}\sum_{\substack{\pi\in \cP(2q)\\ T^\pi \text{is a double unicycle} \\ \text{of first type} }}\sum_{\substack{i_{odd}=1,\dots,n \\ i_{even}=1,\dots,p \\ ker(i)=\pi}}\E(x_i)=
\sum_{\substack{\sigma\in \NC(2,\dots,2q) \\ \sigma/2\leq ker(r) \\ \rho=\sigma\cup Kr(\sigma)}}\frac{1}{n}\left(\frac{p}{n}\right)^{\#(\sigma)-1}\Bigg(\sum_{\substack{A,B\in \sigma \\ d_{T^\rho}(A,B)\neq 2}}1+\sum_{\substack{A,B\in Kr(\sigma) \\ d_{T^\rho}(A,B)\neq 2}}\frac{p}{n}\Bigg) \\
+ \frac{1}{n^{q+1}}\sum_{\substack{\sigma\in \NC(\{2,\dots,2q\}) \\ \pi=\sigma\cup Kr(\sigma) \\ \sigma/2\leq ker(r)}}\sum_{\rho\in L_\pi}\sum_{\substack{i_{odd}=1,\dots,n \\ i_{even}=1,\dots,p \\ ker(i)=\rho}}1 + O(\frac{1}{n^2}).
\end{multline*}
Here $L_\pi$ is defined as in Proposition \ref{Proposition: Order of sum when pi=q+1}.
\end{proposition}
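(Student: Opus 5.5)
I would follow the template of Proposition \ref{Proposition: Infinitesimal contribution pi=q for 2-4 tree}. First I reindex the left-hand side by double trees. By Lemma \ref{Lemma: Characterization of double unicycle first type}, every $\pi$ with $T^\pi$ a double unicycle of first type has the form $\pi=\rho_{A,B}$, where $\rho=\gamma\overline{\pi}$ is a double tree (Proposition \ref{Proposition: pi is non-crossing} gives $\overline{\pi}\in\NC_2(2q)$) and $A,B\in\rho$ satisfy $d_{T^\rho}(A,B)\neq 2$. Conversely, every such merge produces a double unicycle of first type. The cycle created has length $d_{T^\rho}(A,B)$. Since $A$ and $B$ must be of the same parity type for the merged graph to still have all multiplicities $2$ with a genuine cycle (distance $1$ would create multiplicity $4$ or loops), the distance is even and at least $4$.

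Next I check that the correspondence $\pi\leftrightarrow(\rho,\{A,B\})$ is a bijection. The key point is that $\overline{\rho}=\overline{\pi}$, so $\rho=\gamma\overline{\pi}$ is determined by $\pi$. The pair $\{A,B\}$ is then the unique pair of blocks of $\rho$ merged in $\pi$, because $\rho\le\pi$ by Lemma \ref{Lemma: Gamma sigma overline less or equal than pi} and $\#(\rho)=\#(\pi)+1$. By Proposition \ref{Proposition: contribution of double unicycle of first type}, a nonzero term forces $\overline{\rho}=\overline{\pi}\le ker(rr)$ and gives $\E(x_i)=1$. By Proposition \ref{Proposition: Equivalence of ker(r) and Ker(rr)}, writing $\rho=\sigma\cup Kr(\sigma)$, the condition becomes $\sigma/2\le ker(r)$. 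The left-hand side therefore equals
$$\frac{1}{n^{q+1}}\sum_{\substack{\sigma\in\NC(\{2,\dots,2q\})\\ \sigma/2\le ker(r),\ \rho=\sigma\cup Kr(\sigma)}}\ \sum_{\substack{A,B\in\rho\\ d_{T^\rho}(A,B)\neq 2}}\ \sum_{\substack{i_{odd},i_{even}\\ ker(i)=\rho_{A,B}}}1 .$$

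I then split the pairs $\{A,B\}$ into two kinds. The first kind is $A,B$ both in $\sigma$ or both in $Kr(\sigma)$. For these, the counting argument of Proposition \ref{Proposition: N order of pi sum} gives $p^{\#(\sigma)-1}n^{\#(Kr(\sigma))}+O(n^{q-1})$ or $p^{\#(\sigma)}n^{\#(Kr(\sigma))-1}+O(n^{q-1})$ respectively. After dividing by $n^{q+1}$, and using $\#(\sigma)+\#(Kr(\sigma))=q+1$, these give the first line of the right-hand side.

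The second kind is $A\in\sigma$ and $B\in Kr(\sigma)$, which merges an even block with an odd block. This is exactly $\rho_{A,B}\in L_\rho$ from Proposition \ref{Proposition: Order of sum when pi=q+1}, and I need these terms to reproduce the $L_\pi$ term of the statement. Here is the subtle point I must check: is every mixed merge a double unicycle of first type? Such a merge is not a graph-theoretic identification within the same parity class, and distance $1$ is excluded since adjacent blocks would collapse an edge into a loop. So I must verify that every element of $L_\rho$ with $d_{T^\rho}(A,B)\ge 3$ (odd distance) yields a double unicycle of first type with nonzero, unit contribution, and that elements at distance $1$ contribute zero (a loop edge gives $\E(x_{j,j}\overline{x_{j,j}})$, which is still $1$). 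That case needs care: the statement's $L_\pi$ term appears with a plus sign to cancel the minus sign in Proposition \ref{Proposition: Infinitesimal contribution pi=q+1}. I therefore expect that all of $L_\rho$ is accounted for, with the distance-$1$ merges treated as degenerate unicycles with a loop.

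The main obstacle is precisely this bookkeeping: confirming that the mixed-parity merges are exactly the set $L_\rho$, each counted once with $\E(x_i)=1$. I would first test it on $q=2$ and then argue in general via Lemma \ref{Lemma: Characterization of double unicycle first type}. The counts for these mixed merges are left unexpanded, matching the form of the statement, and all errors are $O(n^{-2})$.
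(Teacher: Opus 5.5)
Your overall route is the paper's. You reindex double unicycles of first type as merges $\rho_{A,B}$ of the double tree $\rho=\gamma\overline{\pi}$ at distance $d_{T^\rho}(A,B)\neq 2$, use that every nonzero term equals $1$ under $\overline{\rho}\le ker(rr)$, pass to $\sigma/2\le ker(r)$, and split $\{A,B\}$ into pairs in $\sigma$, pairs in $Kr(\sigma)$, and mixed pairs. The first two kinds are handled correctly.

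The gap is in the mixed pairs, which are the only source of the $L_\pi$ term. In your first paragraph you assert that $A$ and $B$ must have the same parity, so the distance is even and at least $4$. This is false. If it were true, the mixed sum would be empty and the identity would fail, because the $L_\pi$ term contributes at order $1/n$; it is exactly what cancels the $-L_\pi$ term of Proposition~\ref{Proposition: Infinitesimal contribution pi=q+1}. You then reverse yourself and leave the mixed case as an ``expectation''. In the same sentence you say that distance-$1$ merges contribute zero and that their loop gives $1$. So the step that produces the second line of the right-hand side is never actually proved.

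The missing argument is short:
\begin{itemize}
\item Each edge $e_u$ joins $[u]_\rho$ and $[u+1]_\rho$, which have opposite parity. Hence $\overline{T^\rho}$ is bipartite with classes $\sigma$ and $Kr(\sigma)$.
\item A mixed pair is therefore always at odd distance, in particular $\neq 2$. The distance condition can simply be dropped, which is how the paper identifies the mixed sum with $\sum_{\rho'\in L_\rho}\sum_{ker(i)=\rho'}1$.
\item Lemma~\ref{Lemma: Characterization of double unicycle first type} covers every distance $\neq 2$. An odd distance $d\geq 3$ gives an odd cycle of length $d$ in which each paired edge keeps its opposite orientation. Distance $d=1$ turns the pair $e_u,e_v$ into a loop, while $\overline{T^{\rho_{A,B}}}$ still has $q$ vertices and $q$ edges.
\item In both cases $\overline{\rho_{A,B}}=\overline{\rho}$, so Proposition~\ref{Proposition: contribution of double unicycle of first type} gives $\E(x_i)=1$. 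In the loop case the pair contributes $\E(x_{j,j}\overline{x_{j,j}})=1$ with $j\leq\min\{n,p\}$, which is exactly what the count over $ker(i)=\rho_{A,B}$ records.
\end{itemize}
With this in place, and with the false parity claim removed, your proof coincides with the paper's.
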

\begin{proof}
First of all, by Proposition \ref{Proposition: contribution of double unicycle of first type} we have,
\begin{equation*}
\sum_{\substack{\pi\in \cP(2q)\\ T^\pi \text{is a double unicycle} \\ \text{of first type} }}\sum_{\substack{i_{odd}=1,\dots,n \\ i_{even}=1,\dots,p \\ ker(i)=\pi}}\E(x_i)= \sum_{\substack{\pi\in \cP(2q)\\ T^\pi \text{is a double unicycle} \\ \text{of first type}  \\ \overline{\pi}\leq ker(rr)}}\sum_{\substack{i_{odd}=1,\dots,n \\ i_{even}=1,\dots,p \\ ker(i)=\pi}}1
\end{equation*}
From Lemma \ref{Lemma: Characterization of double unicycle first type} we know $T^\pi$ is the result of joining two vertices that are at distance distinct from 2 of the graph $T^\rho$ which is a double tree. Further, $\rho=\gamma\overline{\rho}$ with $\overline{\rho}=\overline{\pi}\in \NC_2(2q)$. Hence proceeding as in Proposition \ref{Proposition: Infinitesimal contribution pi=q for 2-4 tree} we have
\begin{equation*}
\sum_{\substack{\pi\in \cP(2q)\\ T^\pi \text{is a double unicycle} \\ \text{of first type} }}\sum_{\substack{i_{odd}=1,\dots,n \\ i_{even}=1,\dots,p \\ ker(i)=\pi}}\E(x_i)= \sum_{\substack{\overline{\rho}\in \NC_2(2q)\\ \rho=\gamma\overline{\rho} \\ \overline{\rho}\leq ker(rr)}}\sum_{\substack{A,B\in \rho \\ d_{T^\rho}(A,B)\neq 2}}\sum_{\substack{i_{odd}=1,\dots,n \\ i_{even}=1,\dots,p \\ ker(i)=\rho_{A,B}}}1.
\end{equation*}
Now we use the same arguments as in Proposition \ref{Proposition: Infinitesimal contribution pi=q for 2-4 tree}, so that
\begin{equation}\label{Aux 9}
\sum_{\substack{\pi\in \cP(2q)\\ T^\pi \text{is a double unicycle} \\ \text{of first type} }}\sum_{\substack{i_{odd}=1,\dots,n \\ i_{even}=1,\dots,p \\ ker(i)=\pi}}\E(x_i)= \sum_{\substack{\sigma\in \NC(\{2,\dots,2q\})\\ \rho=\sigma\cup Kr(\sigma) \\ \sigma/2\leq ker(r)}}\sum_{\substack{A,B\in \rho \\ d_{T^\rho}(A,B)\neq 2}}\sum_{\substack{i_{odd}=1,\dots,n \\ i_{even}=1,\dots,p \\ ker(i)=\rho_{A,B}}}1.
\end{equation}
Observe that any two blocks $A,B\in\rho$ that are not at distance 2 in $T^\rho$ are either $A,B\in\sigma$, $A,B\in Kr(\sigma)$ or $A\in \sigma,B\in Kr(\sigma)$. So, we can decompose the inner sum of the right hand side in Equation \ref{Aux 9} as three sums, namely
$$\sum_{\substack{A,B\in \sigma \\ d_{T^\rho}(A,B)\neq 2}}\sum_{\substack{i_{odd}=1,\dots,n \\ i_{even}=1,\dots,p \\ ker(i)=\rho_{A,B}}}1, \phantom{a}\sum_{\substack{A,B\in Kr(\sigma) \\ d_{T^\rho}(A,B)\neq 2}}\sum_{\substack{i_{odd}=1,\dots,n \\ i_{even}=1,\dots,p \\ ker(i)=\rho_{A,B}}}1,\text{ and }\sum_{\substack{A\in\sigma , B\in Kr(\sigma) \\ d_{T^\rho}(A,B)\neq 2}}\sum_{\substack{i_{odd}=1,\dots,n \\ i_{even}=1,\dots,p \\ ker(i)=\rho_{A,B}}}1.$$
The first sum simplifies to 
$$\sum_{\substack{A,B\in \sigma \\ d_{T^\rho}(A,B)\neq 2}}p^{\#(\sigma)-1}n^{\#(Kr(\sigma))}+O(n^{q-1}),$$
by a similar argument to that used in Proposition \ref{Proposition: Infinitesimal contribution pi=q for 2-4 tree}. Similarly, the second sum simplifies to
$$\sum_{\substack{A,B\in Kr(\sigma) \\ d_{T^\rho}(A,B)\neq 2}}p^{\#(\sigma)}n^{\#(Kr(\sigma))-1}+O(n^{q-1}).$$
Finally in the last sum we can remove the condition $d_{T^\rho}(A,B)\neq 2$ as blocks in $\sigma$ and their Kreweras are always at odd distance in $T^\rho$, thus this sum becomes
$$\sum_{\substack{A\in\sigma , B\in Kr(\sigma)}}\sum_{\substack{i_{odd}=1,\dots,n \\ i_{even}=1,\dots,p \\ ker(i)=\rho_{A,B}}}1= \sum_{\pi\in L_\rho}\sum_{\substack{i_{odd}=1,\dots,n \\ i_{even}=1,\dots,p \\ ker(i)=\pi}}1.$$
Combining these facts with Equation \ref{Aux 9} yields
\begin{multline*}
\frac{1}{n^{q+1}}\sum_{\substack{\pi\in \cP(2q)\\ T^\pi \text{is a double unicycle} \\ \text{of first type} }}\sum_{\substack{i_{odd}=1,\dots,n \\ i_{even}=1,\dots,p \\ ker(i)=\pi}}\E(x_i)= \\
\frac{1}{n^{q+1}}\sum_{\substack{\sigma\in \NC(\{2,\dots,2q\})\\ \rho=\sigma\cup Kr(\sigma) \\ \sigma/2\leq ker(r)}}\sum_{\substack{A,B\in \sigma \\ d_{T^\rho}(A,B)\neq 2}}p^{\#(\sigma)-1}n^{\#(Kr(\sigma))}+\sum_{\substack{A,B\in Kr(\sigma) \\ d_{T^\rho}(A,B)\neq 2}}p^{\#(\sigma)}n^{\#(Kr(\sigma))-1} 
+\sum_{\pi\in L_\rho}\sum_{\substack{i_{odd}=1,\dots,n \\ i_{even}=1,\dots,p \\ ker(i)=\pi}}1+O(n^{q-1}) \\
= \sum_{\substack{\sigma\in \NC(\{2,\dots,2q\}) \\ \sigma/2\leq ker(r) \\ \rho=\sigma\cup Kr(\sigma)}}\sum_{\substack{A,B\in \sigma \\ d_{T^\rho}(A,B)\neq 2}}\frac{1}{n}\left(\frac{p}{n}\right)^{\#(\sigma)-1}+\sum_{\substack{A,B\in Kr(\sigma) \\ d_{T^\rho}(A,B)\neq 2}}\frac{1}{n}\left(\frac{p}{n}\right)^{\#(\sigma)} 
+\frac{1}{n^{q+1}}\sum_{\substack{\sigma\in \NC(\{2,\dots,2q\})\\ \rho=\sigma\cup Kr(\sigma) \\ \sigma/2\leq ker(r)}}\sum_{\pi\in L_\rho}\sum_{\substack{i_{odd}=1,\dots,n \\ i_{even}=1,\dots,p \\ ker(i)=\pi}}1+O(\frac{1}{n^2}),
\end{multline*}
which simplifies to the desired result.
\end{proof}

Finally, at this point it won't be necessary to look at the case when we have double unicycle of second type thanks to Proposition \ref{Proposition: contribution of double unicycle of second type}, however we will look at this case in subsection \ref{Section: Case of non zero psudovariance}. We are ready to prove our main results on the infinitesimal moments.

\begin{proof}[Proof of Theorem \ref{Lemma: Expansion of moments 5}]
From Lemma \ref{Lemma: Expansion of moments 2} and the fact that $\overline{C^\pi}$ has an edge of multiplicity 1 if and only if $\overline{T^\pi}$ does, we get,
$$\frac{1}{n}\E\circ \Tr(M^{(r_1)}\cdots M^{(r_q)})=\frac{1}{n^{q+1}}\sum_{\substack{\pi\in \cP(2q)\\ \overline{T^\pi} \text{ has no edges} \\\text{of multiplicity }1}}\sum_{\substack{i_{odd}=1,\dots,n \\ i_{even}=1,\dots,p \\ ker(i)=\pi}}\E(x_i).$$
By Lemma \ref{Lemma: First characterization of pi} we know $\#(\pi)\leq q+1$, thus
\begin{align*}
&\frac{1}{n}\E\circ \Tr(M^{(r_1)}\cdots M^{(r_q)})\\
&=\frac{1}{n^{q+1}}\sum_{\substack{\pi\in \cP(2q)\\ \#(\pi)=q+1}}\sum_{\substack{i_{odd}=1,\dots,n \\ i_{even}=1,\dots,p \\ ker(i)=\pi}}\E(x_i) 
+\frac{1}{n^{q+1}}\sum_{\substack{\pi\in \cP(2q)\\ \#(\pi)=q}}\sum_{\substack{i_{odd}=1,\dots,n \\ i_{even}=1,\dots,p \\ ker(i)=\pi}}\E(x_i)+\frac{1}{n^{q+1}}\sum_{\substack{\pi\in \cP(2q)\\ \#(\pi)\leq q-1}}\sum_{\substack{i_{odd}=1,\dots,n \\ i_{even}=1,\dots,p \\ ker(i)=\pi}}\E(x_i).
\end{align*}
But Proposition \ref{Proposition: N order of pi sum} says the last sum has order $O(\frac{1}{n^2})$, hence
\begin{multline*}
\frac{1}{n}\E\circ \Tr(M^{(r_1)}\cdots M^{(r_q)})=\frac{1}{n^{q+1}}\sum_{\substack{\pi\in \cP(2q)\\ \#(\pi)=q+1}}\sum_{\substack{i_{odd}=1,\dots,n \\ i_{even}=1,\dots,p \\ ker(i)=\pi}}\E(x_i)
+\frac{1}{n^{q+1}}\sum_{\substack{\pi\in \cP(2q)\\ \#(\pi)=q}}\sum_{\substack{i_{odd}=1,\dots,n \\ i_{even}=1,\dots,p \\ ker(i)=\pi}}\E(x_i)+O(\frac{1}{n^2}).
\end{multline*}
The second sum receives nonzero contributions only when $T^\pi$ is either a 2-4 tree, or a double unicycle of first type or a double unicycle of second type. Hence from Propositions \ref{Proposition: Infinitesimal contribution pi=q+1}, \ref{Proposition: Infinitesimal contribution pi=q for 2-4 tree} and \ref{Proposition: Infinitesimal contribution pi=q for double unicycle first type} we get

\begin{align*} 
&\frac{1}{n}\E\circ \Tr(M^{(r_1)}\cdots M^{(r_q)})= \frac{1}{n^{q+1}}\sum_{\substack{\sigma\in \NC(\{2,\dots,2q\}) \\ \pi=\sigma\cup Kr(\sigma) \\ \sigma/2\leq ker(r)}}\Bigg((p)_{\#(\sigma)}(n)_{\#(Kr(\sigma))}-\sum_{\rho\in L_\pi}\sum_{\substack{i_{odd}=1,\dots,n \\ i_{even}=1,\dots,p \\ ker(i)=\rho}}1\Bigg) \\ 
&+ \sum_{\substack{\sigma\in \NC(2,\dots,2q) \\ \sigma/2\leq ker(r) \\ \rho=\sigma\cup Kr(\sigma)}}\frac{1}{n}\left(\frac{p}{n}\right)^{\#(\sigma)-1}\Bigg(\sum_{\substack{A,B\in \sigma \\ d_{T^\rho}(A,B)=2}}\alpha_{a,b}+\sum_{\substack{A,B\in Kr(\sigma) \\ d_{T^\rho}(A,B)=2}}\frac{p}{n}\alpha_{a,b}+\sum_{\substack{A,B\in \sigma \\ d_{T^\rho}(A,B)\neq 2}}1+\sum_{\substack{A,B\in Kr(\sigma) \\ d_{T^\rho}(A,B)\neq 2}}\frac{p}{n}\Bigg) \\
&+ \frac{1}{n^{q+1}}\sum_{\substack{\sigma\in \NC(\{2,\dots,2q\}) \\ \pi=\sigma\cup Kr(\sigma) \\ \sigma/2\leq ker(r)}}\sum_{\rho\in L_\pi}\sum_{\substack{i_{odd}=1,\dots,n \\ i_{even}=1,\dots,p \\ ker(i)=\rho}}1 + \frac{1}{n^{q+1}}\sum_{\substack{\pi\in \cP(2q)\\ T^\pi \text{ is a double unicycle} \\ \text{of second type}}}\sum_{\substack{i_{odd}=1,\dots,n \\ i_{even}=1,\dots,p \\ ker(i)=\pi}}\E(x_i)+O(\frac{1}{n^2}). 
\end{align*}

By Proposition \ref{Proposition: contribution of double unicycle of second type} and basic algebra this simplifies to,
\begin{multline}\label{Aux 10}
\frac{1}{n}\E\circ \Tr(M^{(r_1)}\cdots M^{(r_q)})= \frac{1}{n^{q+1}}\sum_{\substack{\sigma\in \NC(\{2,\dots,2q\}) \\ \sigma/2\leq ker(r)}}(p)_{\#(\sigma)}(n)_{\#(Kr(\sigma))} \\
+ \sum_{\substack{\sigma\in \NC(2,\dots,2q) \\ \sigma/2\leq ker(r) \\ \rho=\sigma\cup Kr(\sigma)}}\frac{1}{n}\left(\frac{p}{n}\right)^{\#(\sigma)-1}\Bigg(\sum_{\substack{A,B\in \sigma \\ d_{T^\rho}(A,B)=2}}\alpha_{a,b}+\sum_{\substack{A,B\in Kr(\sigma) \\ d_{T^\rho}(A,B)=2}}\frac{p}{n}\alpha_{a,b}\Bigg) \\
+ \sum_{\substack{\sigma\in \NC(2,\dots,2q) \\ \sigma/2\leq ker(r) \\ \rho=\sigma\cup Kr(\sigma)}}\frac{1}{n}\left(\frac{p}{n}\right)^{\#(\sigma)-1}\Bigg(\sum_{\substack{A,B\in \sigma \\ d_{T^\rho}(A,B)\neq 2}}1+\sum_{\substack{A,B\in Kr(\sigma) \\ d_{T^\rho}(A,B)\neq 2}}\frac{p}{n}\Bigg)+O(\frac{1}{n^2}).
\end{multline}
Note that 
\begin{multline*}
(p)_{\#(\sigma)}(n)_{\#(Kr(\sigma))}=p^{\#(\sigma)}n^{\#(Kr(\sigma))} -\frac{\#(\sigma)(\#(\sigma)-1)}{2}p^{\#(\sigma)-1}n^{\#(Kr(\sigma))}\\
-\frac{\#(Kr(\sigma))(\#(Kr(\sigma))-1)}{2}p^{\#(\sigma)}n^{\#(Kr(\sigma))-1}+O(n^{\#(\sigma)+\#(Kr(\sigma))-2})
\end{multline*}
Substituting this into Equation \ref{Aux 10} and simplifying yields
\begin{multline}\label{Aux 11}
\frac{1}{n}\E\circ \Tr(M^{(r_1)}\cdots M^{(r_q)})= \\
\sum_{\substack{\sigma\in \NC(\{2,\dots,2q\}) \\ \sigma/2\leq ker(r)}}\left(\frac{p}{n}\right)^{\#(\sigma)}-\frac{\#(\sigma)(\#(\sigma)-1)}{2n}\left(\frac{p}{n}\right)^{\#(\sigma)-1}-\frac{\#(Kr(\sigma))(\#(Kr(\sigma))-1)}{2n}\left(\frac{p}{n}\right)^{\#(\sigma)} \\
+ \sum_{\substack{\sigma\in \NC(2,\dots,2q) \\ \sigma/2\leq ker(r) \\ \rho=\sigma\cup Kr(\sigma)}}\frac{1}{n}\left(\frac{p}{n}\right)^{\#(\sigma)-1}\Bigg(\sum_{\substack{A,B\in \sigma \\ d_{T^\rho}(A,B)=2}}(\alpha_{a,b}-1)+\sum_{\substack{A,B\in Kr(\sigma) \\ d_{T^\rho}(A,B)=2}}\frac{p}{n}(\alpha_{a,b}-1)\Bigg) \\
+ \sum_{\substack{\sigma\in \NC(2,\dots,2q) \\ \sigma/2\leq ker(r)}}\frac{1}{n}\left(\frac{p}{n}\right)^{\#(\sigma)-1}\left(\sum_{\substack{A,B\in \sigma}}1+\sum_{\substack{A,B\in Kr(\sigma)}}\frac{p}{n}\right)+O(\frac{1}{n^2}) \\
= \sum_{\substack{\sigma\in \NC(\{2,\dots,2q\}) \\ \sigma/2\leq ker(r)}}\left(\frac{p}{n}\right)^{\#(\sigma)}-\frac{\#(\sigma)(\#(\sigma)-1)}{2n}\left(\frac{p}{n}\right)^{\#(\sigma)-1}-\frac{\#(Kr(\sigma))(\#(Kr(\sigma))-1)}{2n}\left(\frac{p}{n}\right)^{\#(\sigma)} \\
+ \sum_{\substack{\sigma\in \NC(2,\dots,2q) \\ \sigma/2\leq ker(r) \\ \rho=\sigma\cup Kr(\sigma)}}\frac{1}{n}\left(\frac{p}{n}\right)^{\#(\sigma)-1}\Bigg(\sum_{\substack{A,B\in \sigma \\ d_{T^\rho}(A,B)=2}}(\alpha_{a,b}-1)+\sum_{\substack{A,B\in Kr(\sigma) \\ d_{T^\rho}(A,B)=2}}\frac{p}{n}(\alpha_{a,b}-1)\Bigg) \\
+ \sum_{\substack{\sigma\in \NC(2,\dots,2q) \\ \sigma/2\leq ker(r)}}\frac{1}{n}\left(\frac{p}{n}\right)^{\#(\sigma)-1}\left(\frac{\#(\sigma)(\#(\sigma)-1)}{2}+\frac{p}{n}\frac{\#(Kr(\sigma))(\#(Kr(\sigma))-1)}{2}\right)+O(\frac{1}{n^2}) \\
= \sum_{\substack{\sigma\in \NC(\{2,\dots,2q\}) \\ \sigma/2\leq ker(r)}}\left(\frac{p}{n}\right)^{\#(\sigma)}
+ \sum_{\substack{\sigma\in \NC(2,\dots,2q) \\ \sigma/2\leq ker(r) \\ \rho=\sigma\cup Kr(\sigma)}}\frac{1}{n}\left(\frac{p}{n}\right)^{\#(\sigma)-1}\Bigg(\sum_{\substack{A,B\in \sigma \\ d_{T^\rho}(A,B)=2}}(\alpha_{a,b}-1)+\sum_{\substack{A,B\in Kr(\sigma) \\ d_{T^\rho}(A,B)=2}}\frac{p}{n}(\alpha_{a,b}-1)\Bigg) +O(\frac{1}{n^2}).
\end{multline}
To conclude it is enough to relabel the second sum in terms of $\sigma/2$, so that $A,B\in\sigma$ becomes $A,B\in 2\sigma$, $A,B\in Kr(\sigma)$ becomes $A,B\in 2Kr(\sigma)-1$ and $\rho=\sigma\cup Kr(\sigma)$ becomes $2\sigma \cup (2Kr(\sigma)-1)$.

\end{proof}

\begin{proof}[Proof of Corollary \ref{Corollary: infinitesimal moments}]
From Theorem \ref{Lemma: Expansion of moments 5} and Corollary \ref{Corollary: Moments and cumulants} we get
\begin{multline*}
\mu^\prime(M^{(r_1)},\dots,M^{(r_q)})= 
\lim_{n\rightarrow\infty} \sum_{\substack{\sigma\in \NC(q) \\ \sigma\leq ker(r)}}n\left[\left(\frac{p}{n}\right)^{\#(\sigma)}-c^{\#(\sigma)}\right]+ \\
\sum_{\substack{\sigma\in \NC(q) \\ \sigma\leq ker(r)}}\left(\frac{p}{n}\right)^{\#(\sigma)-1}\Bigg(\sum_{\substack{A,B\in 2\sigma \\ d_{T^\rho}(A,B)=2}}(\alpha_{a,b}-1)+\sum_{\substack{A,B\in 2Kr(\sigma)-1 \\ d_{T^\rho}(A,B)=2}}\frac{p}{n}(\alpha_{a,b}-1)\Bigg) +O(\frac{1}{n}) \\
= \sum_{\substack{\sigma\in \NC(q) \\ \sigma\leq ker(r)}}\#(\sigma)c^\prime c^{\#(\sigma)-1}+ 
\sum_{\substack{\sigma\in \NC(q) \\ \sigma\leq ker(r)}}c^{\#(\sigma)-1}\Bigg(\sum_{\substack{A,B\in 2\sigma \\ d_{T^\rho}(A,B)=2}}(\alpha_{a,b}-1)+\sum_{\substack{A,B\in 2Kr(\sigma)-1 \\ d_{T^\rho}(A,B)=2}}c(\alpha_{a,b}-1)\Bigg).
\end{multline*}
This proves 1. To prove $2$, it is enough to note that when $\E|x_{1,2}|^4=2$ then $\alpha_{a,b}-1=0$ for any $a,b$ so that 
$$
\mu^\prime(M^{(r_1)},\dots,M^{(r_q)})=\sum_{\substack{\sigma\in \NC(q) \\ \sigma\leq ker(r)}}\#(\sigma)c^\prime c^{\#(\sigma)-1}.
$$
The last expression is the infinitesimal moment cumulant formula and the condition $\sigma\leq ker(r)$ is equivalent to vanishing of mixed infinitesimal cumulants, so we conclude 2.

\end{proof}


To prove Proposition~\ref{Proposition: inf moments of covariant matrix}, we first
establish the following lemma.

\begin{lemma}\label{lem:weighted-Kreweras-identity}
For every \(q\geq 1\) and \(c>0\), we have
\[
\sum_{\sigma\in\NC(q)}
c^{\#(\sigma)-1}
\sum_{A\in\operatorname{Kr}(\sigma)}
\binom{|A|}{2}
=
\sum_{\sigma\in\NC(q)}
c^{\#(\sigma)}
\sum_{A\in\sigma}
\binom{|A|}{2}.
\]
\end{lemma}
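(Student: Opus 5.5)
We reduce the identity, via the Kreweras bijection, to a symmetry of a single family of counting numbers, and then prove that symmetry by generating functions and Lagrange inversion. Throughout, $q$ is fixed and both sides are compared coefficient by coefficient in $c$.

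\emph{Step 1 (reindexing).} By Section~\ref{Section: Non crossing partitions}, $\sigma\mapsto\tau:=\operatorname{Kr}(\sigma)$ is a bijection of $\NC(q)$ with $\#(\sigma)+\#(\tau)=q+1$. So $c^{\#(\sigma)-1}=c^{q-\#(\tau)}$, and the left-hand side becomes $\sum_{\tau\in\NC(q)}c^{q-\#(\tau)}\sum_{A\in\tau}\binom{|A|}{2}$. Define
\[
N_k:=\sum_{\substack{\tau\in\NC(q)\\ \#(\tau)=k}}\ \sum_{A\in\tau}\binom{|A|}{2},
\]
which counts triples $(\tau,A,\{i,j\})$ with $\#(\tau)=k$ and $\{i,j\}$ a $2$-subset of a block $A$. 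Comparing coefficients of $c^m$, the lemma becomes the symmetry
\[
N_{q-m}=N_m,\qquad 0\le m\le q .
\]
The case $m=0$ holds since $N_0=0=N_q$: the partition with $q$ blocks has only singletons. Small cases also check, e.g.\ $q=2$ gives $N_1=1$ and $N_0=N_2=0$. This symmetry is the real content and is the main obstacle. It differs from the standard Narayana symmetry $k\leftrightarrow q+1-k$ by a shift, and the shift comes from the weight $\binom{|A|}{2}$.

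\emph{Step 2 (a formula for the numbers $B(q,k,s)$ of pairs $(\tau,A)$ with $\#(\tau)=k$ and $A\in\tau$ a block of size $s$).} Cut the circle at the marked block $A$. Its complement splits into $s$ cyclic gaps, and $\tau$ restricts to an arbitrary, possibly empty, non-crossing partition on each gap, with $k-1$ blocks in total. Let $F(x,y)$ be the Narayana generating function of possibly empty non-crossing partitions, with $x$ marking elements and $y$ marking blocks. It satisfies the functional equation $F=1+xyF\cdot\frac{xF}{1-xF}$, which encodes the decomposition of the block containing the first element. Then, up to the usual rooting factor $q/s$ that accounts for the cyclic choice of where $A$ sits, $B(q,k,s)$ is the coefficient of $x^{q-s}y^{k-1}$ in $F(x,y)^s$.

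Lagrange inversion applied to the functional equation for $F$ should give a closed product of binomials. I expect
\[
B(q,k,s)=\binom{q}{s}\binom{q-s-1}{k-2}\Big/\!\Big(\text{a linear factor}\Big),
\]
though the exact normalisation must be computed. In particular, for $k=1$ one needs $B(q,1,q)=1$.

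\emph{Step 3 (summation and symmetry).} Sum $\binom{s}{2}B(q,k,s)$ over $s$ using Vandermonde-type identities to obtain a closed form for $N_k$, and then check $N_k=N_{q-k}$ directly. My guess is that this form is something like $\binom{q}{2}\binom{q-2}{k-1}\binom{q-2}{k-2}\big/(q-1)$, possibly up to a simple factor. A form of this shape would make the symmetry under $k\mapsto q-k$ manifest, since $\binom{q-2}{k-1}\binom{q-2}{k-2}$ is invariant under that substitution.

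\emph{Fallback.} If the algebra in Step 3 becomes unwieldy, I would instead seek a direct bijection. Each triple $(\tau,A,\{i,j\})$ with $i<j$ in $A$ can be split at the pair $(i,j)$ into two smaller non-crossing partitions, one living on $[i,j)$ and one on the remaining arc. Applying Kreweras complementation separately on each piece and regluing should send $k$ blocks to $q-k$ blocks, because each piece loses one block relative to the global count. The delicate point in this route is to check that the regluing is well defined and invertible.
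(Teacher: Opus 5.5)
\textbf{Genuine gap: the symmetry $N_{q-m}=N_m$ is never proved, and the proposed route to it is wrong in its specifics.} Your Step~1 is correct and is exactly the paper's final step: via the Kreweras bijection, the lemma is equivalent to $N_{q-m}=N_m$ for $0\le m\le q$ (the paper's $\alpha_{q,q-k}=\alpha_{q,k}$). As you say yourself, this symmetry is the whole content. The problems in Steps~2--3 are these:
\begin{itemize}
\item \emph{The functional equation is wrong.} $F=1+xyF\cdot\frac{xF}{1-xF}=1+y\sum_{t\ge 2}(xF)^t$ excludes singleton blocks (its coefficient of $x$ is $0$). The correct equation is $F=1+\frac{xyF}{1-xF}$.
\item \emph{The marked-block count has the wrong shape.} It is not $\binom{q}{s}\binom{q-s-1}{k-2}$ over a linear factor. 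For $2\le k\le q-1$ it is $B(q,k,s)=\binom{q}{k-1}\binom{q-s-1}{k-2}$ (Arizmendi's Proposition~2.1, which the paper quotes). For example, $B(q,2,s)=q$ for all $1\le s\le q-1$.
\item \emph{The guessed closed form for $N_k$ is false.} It vanishes at $k=1$, whereas $N_1=\binom{q}{2}$, so no simple factor can repair it.
\item \emph{The claimed invariance is false.} $\binom{q-2}{k-1}\binom{q-2}{k-2}$ is not invariant under $k\mapsto q-k$; it becomes $\binom{q-2}{k-1}\binom{q-2}{k}$. It has exactly the unshifted Narayana symmetry $k\mapsto q+1-k$, which you correctly identified as the wrong one. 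So even a verified formula of that shape would not finish the proof.
\end{itemize}
The paper closes the gap by summing the block count against $\binom{t}{2}$ with the shifted Vandermonde identity $\sum_t\binom{q-t-1}{k-2}\binom{t}{2}=\binom{q}{k+1}$. This gives $N_k=\binom{q}{k-1}\binom{q}{k+1}$, valid also for $k=1,q$, which is visibly symmetric under $k\mapsto q-k$.

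\textbf{Your fallback can be made to work in a simpler form.} As written it is unverified, but if you split only the block, not the circle, and take one global complement, you get a self-contained bijective proof.
\begin{itemize}
\item Take $\tau\in\NC(q)$ with $\#(\tau)=k\le q-1$ and $i\ne j$ in a common block, and set $\tau'=\tau\,(i\,j)$. This splits that block into two cyclic-order pieces, so $\#(\tau')=k+1$.
\item Since $\tau'^{-1}\gamma=(i\,j)\,\tau^{-1}\gamma$ has at least $\#(\tau^{-1}\gamma)-1$ cycles, you get $\#(\tau')+\#(\tau'^{-1}\gamma)\ge q+1$. Biane's inequality forces equality throughout.
\item Hence $\tau'\in\NC(q)$. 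Moreover $(i\,j)$ merges two cycles of $\tau^{-1}\gamma$, so $i,j$ lie in a common block of $\operatorname{Kr}_{\mathrm R}(\tau')=\tau'^{-1}\gamma$.
\item The same count run backwards, using Biane's characterisation for arbitrary permutations, shows that $(\tau',\{i,j\})\mapsto(\tau'(i\,j),\{i,j\})$ is the inverse map.
\end{itemize}
Therefore $N_k=\sum_{\tau'\in\NC_{k+1}(q)}\sum_{A\in\operatorname{Kr}_{\mathrm R}(\tau')}\binom{|A|}{2}=N_{q-k}$, because $\operatorname{Kr}_{\mathrm R}$ maps $\NC_{k+1}(q)$ bijectively onto $\NC_{q-k}(q)$. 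Together with $N_0=N_q=0$, this is exactly the symmetry you need, without computing any closed form.
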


\begin{proof}
For \(\sigma\in\NC(q)\), set
\[
F(\sigma)
:=
\sum_{A\in\sigma}\binom{|A|}{2}.
\]
For \(0\leq k\leq q\), let
\[
\NC_k(q)
:=
\bigl\{
\sigma\in\NC(q):\#(\sigma)=k
\bigr\},
\qquad
\alpha_{q,k}
:=
\sum_{\sigma\in\NC_k(q)}F(\sigma),
\]
where we set \(\alpha_{q,0}=0\).

By \cite[Proposition~2.1]{Arizmendi12}, for
\(2\leq k\leq q-1\), the total number of blocks of size \(t\)
among all partitions in \(\NC_k(q)\) is
\[
\sum_{\sigma\in\NC_k(q)}
\#\bigl\{
A\in\sigma:|A|=t
\bigr\}
=
\binom{q}{k-1}
\binom{q-t-1}{k-2}.
\]
Consequently,
\begin{align*}
\alpha_{q,k}
&=
\sum_{t=2}^{q-k+1}
\binom{t}{2}
\sum_{\sigma\in\NC_k(q)}
\#\bigl\{
A\in\sigma:|A|=t
\bigr\} 
=
\binom{q}{k-1}
\sum_{t=2}^{q-k+1}
\binom{q-t-1}{k-2}
\binom{t}{2}.
\end{align*}

To evaluate the last sum, set
\[
j=t-2
\qquad\text{and}\qquad
m=q-k-1.
\]
Then
\begin{align*}
\sum_{t=2}^{q-k+1}
\binom{q-t-1}{k-2}
\binom{t}{2}
&=
\sum_{j=0}^{q-k-1}
\binom{j+2}{j}
\binom{q-j-3}{q-k-1-j} 
=
\sum_{j=0}^{m}
\binom{j+2}{j}
\binom{m-j+k-2}{m-j}.
\end{align*}
Using the shifted Vandermonde identity
\[
\sum_{j=0}^{m}
\binom{j+r}{j}
\binom{m-j+s}{m-j}
=
\binom{m+r+s+1}{m},
\]
with \(r=2\) and \(s=k-2\), we obtain
\begin{align*}
\sum_{t=2}^{q-k+1}
\binom{q-t-1}{k-2}
\binom{t}{2}
=
\binom{m+2+(k-2)+1}{m} 
=
\binom{m+k+1}{m} 
=
\binom{q}{q-k-1}
=
\binom{q}{k+1}.
\end{align*}
It follows that
\[
\alpha_{q,k}
=
\binom{q}{k-1}
\binom{q}{k+1},
\qquad
2\leq k\leq q-1.
\]

The same formula also holds for \(k=1\) and \(k=q\).
Indeed,
\[
\alpha_{q,1}
=
\binom{q}{2}
=
\binom{q}{0}\binom{q}{2},\quad
\text
{whereas}
\quad
\alpha_{q,q}=0
=
\binom{q}{q-1}\binom{q}{q+1},
\]
with the convention that
\(\binom{q}{r}=0\) whenever \(r\notin\{0,\ldots,q\}\).
Thus,
\[
\alpha_{q,k}
=
\binom{q}{k-1}
\binom{q}{k+1},
\qquad
0\leq k\leq q.
\]
In particular, we have 
$\alpha_{q,q-k}=\alpha_{q,k},\ 
0\leq k\leq q.
$
We now use the fact that the Kreweras complement is a bijection
satisfying
\[
\#\bigl(\operatorname{Kr}(\sigma)\bigr)
=
q+1-\#(\sigma).
\]
Hence it restricts, for every \(1\leq k\leq q\), to a bijection
\[
\operatorname{Kr}:
\NC_k(q)
\longrightarrow
\NC_{q+1-k}(q).
\]
Therefore,
\begin{align*}
\sum_{\sigma\in\NC(q)}
c^{\#(\sigma)-1}F\bigl(\operatorname{Kr}(\sigma)\bigr) &=
\sum_{k=1}^{q}
c^{k-1}
\sum_{\sigma\in\NC_k(q)}
F\bigl(\operatorname{Kr}(\sigma)\bigr) =
\sum_{k=1}^{q}
c^{k-1}\alpha_{q,q+1-k} \\
&=
\sum_{k=1}^{q}
c^{k-1}\alpha_{q,k-1} =
\sum_{j=0}^{q-1}
c^j\alpha_{q,j}.
\end{align*}
Since \(\alpha_{q,0}=\alpha_{q,q}=0\), the last expression equals
\[
\sum_{j=1}^{q}c^j\alpha_{q,j}
=
\sum_{\sigma\in\NC(q)}
c^{\#(\sigma)}F(\sigma).
\]
Recalling the definition of \(F\), we conclude that
\[
\sum_{\sigma\in\NC(q)}
c^{\#(\sigma)-1}
\sum_{A\in\operatorname{Kr}(\sigma)}
\binom{|A|}{2}
=
\sum_{\sigma\in\NC(q)}
c^{\#(\sigma)}
\sum_{A\in\sigma}
\binom{|A|}{2},
\]
as desired.
\end{proof}

\begin{proof}[Proof of Proposition \ref{Proposition: inf moments of covariant matrix}]
By Corollary~\ref{Corollary: infinitesimal moments}, we have
\begin{align}
\mu^\prime(M^q)
=
\sum_{\sigma\in\NC(q)}
\#(\sigma)c^\prime c^{\#(\sigma)-1}
+
(\frac{1}{2}\mathbb{E}|x_{1,2}|^4-1)
\sum_{\sigma\in\NC(q)}
c^{\#(\sigma)-1}
\Bigg(
\sum_{\substack{A,B\in 2\sigma\\
d_{T^\rho}(A,B)=2}}
1
+
c\sum_{\substack{A,B\in
2\operatorname{Kr}(\sigma)-1\\
d_{T^\rho}(A,B)=2}}
1
\Bigg).
\label{eq:initial-inf-moment-covariance}
\end{align}

Fix \(\sigma\in\NC(q)\). Any unordered pair of vertices belonging
to the same vertex class of the bipartite tree \(T^\rho\) and lying
at distance two has a unique common neighbor in the opposite vertex
class. Consequently,
\begin{align*}
&
\sum_{\substack{A,B\in 2\sigma\\
d_{T^\rho}(A,B)=2}}
1
+
c\sum_{\substack{A,B\in
2\operatorname{Kr}(\sigma)-1\\
d_{\bar{T}^\rho}(A,B)=2}}
1
=
\sum_{C\in 2\operatorname{Kr}(\sigma)-1}
\binom{\deg_{\overline{T^\rho}}(C)}{2}
+
c\sum_{C\in 2\sigma}
\binom{\deg_{\overline{T^\rho}}(C)}{2}.
\end{align*}
Here, \(\deg_{\overline{T^\rho}}(C)\) denotes the degree of the vertex \(C\)
in \(\overline{T^\rho}\), that is, the number of edges incident to \(C\).
By construction,
\[
\deg_{\overline{T^\rho}}(C)=|C|.
\]
It follows that
\begin{align}
&\sum_{\sigma\in\NC(q)}
c^{\#(\sigma)-1}
\Bigg(
\sum_{\substack{A,B\in 2\sigma\\
d_{T^\rho}(A,B)=2}}
1
+
c\sum_{\substack{A,B\in
2\operatorname{Kr}(\sigma)-1\\
d_{T^\rho}(A,B)=2}}
1
\Bigg)
\notag\\
&\qquad=
\sum_{\sigma\in\NC(q)}
c^{\#(\sigma)-1}
\Bigg(
\sum_{C\in 2\operatorname{Kr}(\sigma)-1}
\binom{|C|}{2}
+
c\sum_{C\in 2\sigma}
\binom{|C|}{2}
\Bigg)
\notag\\
&\qquad=
\sum_{\sigma\in\NC(q)}
c^{\#(\sigma)-1}
\sum_{C\in\operatorname{Kr}(\sigma)}
\binom{|C|}{2}
+
\sum_{\sigma\in\NC(q)}
c^{\#(\sigma)}
\sum_{C\in\sigma}
\binom{|C|}{2}.
\label{eq:inf-moments-covariance}
\end{align}
In the last equality, we used the cardinality-preserving bijections
\[
C\longmapsto 2C,
\qquad
C\longmapsto 2C-1,
\]
between the blocks of \(\sigma\) and \(2\sigma\), and between the
blocks of \(\operatorname{Kr}(\sigma)\) and
\(2\operatorname{Kr}(\sigma)-1\), respectively.

Applying Lemma~\ref{lem:weighted-Kreweras-identity} to the first
term on the right-hand side of
\eqref{eq:inf-moments-covariance}, we obtain
\begin{align*}
&\sum_{\sigma\in\NC(q)}
c^{\#(\sigma)-1}
\Bigg(
\sum_{\substack{A,B\in 2\sigma\\
d_{T^\rho}(A,B)=2}}
1
+
c\sum_{\substack{A,B\in
2\operatorname{Kr}(\sigma)-1\\
d_{T^\rho}(A,B)=2}}
1
\Bigg)
=
2\sum_{\sigma\in\NC(q)}
c^{\#(\sigma)}
\sum_{A\in\sigma}\binom{|A|}{2}.
\end{align*}
Substituting this identity into
\eqref{eq:initial-inf-moment-covariance} gives
\begin{align*}
\mu^\prime(M^q)
&=
\sum_{\sigma\in\NC(q)}
\#(\sigma)c^\prime c^{\#(\sigma)-1}
+
(\mathbb{E}|x_{1,2}|^4-2)
\sum_{\sigma\in\NC(q)}
c^{\#(\sigma)}
\sum_{A\in\sigma}\binom{|A|}{2}
\\
&=
\sum_{\sigma\in\NC(q)}
c^{\#(\sigma)-1}
\sum_{A\in\sigma}
\left(
c^\prime
+
c(\mathbb{E}|x_{1,2}|^4-2)\binom{|A|}{2}
\right).
\end{align*}
\end{proof}

\subsection{Nonzero pseudo variance}\label{Section: Case of non zero psudovariance}

In this section let us complete the case when $\mathbb{E}(x_{i,j}^2)\neq 0$. Note that this term appears only when $T^\pi$ is a double unicycle of second type. So, from now on $T^\pi$ is always a double unicycle of second type. 

Let $e_u$ and $e_v$ be such that $\{u,v\}\in \overline{\pi}$, i.e., $e_u$ and $e_v$ join the same pair of vertices of $T^\pi$. First of all, if we want $\mathbb{E}(x_i)\neq 0$ we require $\overline{\pi}\leq ker(rr)$ as otherwise by independence and centering of the variables we get $0$. However, even if $\overline{\pi}\leq ker(rr)$ we still might get $\mathbb{E}(x_i)=0$. Let us explore the cases in which this occurs.

If $\overline{e_u}$ is an edge not in the cycle of $\overline{T^\pi}$ then it must be $u$ and $v$ have distinct parity and $e_u$ and $e_v$ have the opposite orientation. Thus $e_u$ and $e_v$ have the same orientation in $C^\pi$ and this contributes a factor of the form $\mathbb{E}(x_{1,2}\overline{x_{1,2}})=1$. 

If $\overline{e_u}$ is within the cycle of $\overline{T^\pi}$ then it might be possible $u,v$ have the same parity or distinct parity. In the second case this leads to a factor of the form $\mathbb{E}(x_{1,2}\overline{x_{2,1}})=0$. So we only need to count double unicycles of second type where the first scenarios occurs. This is only possible when the cycle of $\overline{T^\pi}$ has even size (see for instance right hand side of Figure \ref{figure: double unicycle second type}).


\begin{figure}[htbp]
\centering
\sffamily\sansmath
\captionsetup{
  font={small,sf},
  labelfont={bf,sf},
  justification=raggedright,
  singlelinecheck=false
}

\begin{tikzpicture}[
  font=\small,
  vertex/.style={
    circle,draw,fill=white,
    inner sep=0pt,minimum size=4.5pt
  },
  directed edge/.style={
    -{Stealth[length=1.7mm,width=1.2mm]},
    line width=0.6pt,
    shorten <=1pt,
    shorten >=1pt
  }
]

\begin{scope}[xshift=-3cm]
  \foreach \k in {1,...,8} {
    \node[vertex] (v\k)
      at ({90-45*(\k-1)}:1.75) {};
    \node at ({90-45*(\k-1)}:2.02) {$\k$};
  }

  \foreach \k in {1,...,8} {
    \pgfmathtruncatemacro{\nextvertex}{mod(\k,8)+1}

    \draw[directed edge]
      (v\nextvertex) to[bend right=10] (v\k);

    \node at ({90-45*(\k-0.5)}:2.25) {$e_{\k}$};
  }

  \node at (0,-2.65) {$T=T_8$};
\end{scope}

\draw[directed edge]
  (-0.35,0) -- node[above] {$\pi$} (0.35,0);

\node[vertex,label=above:{$\{4,8\}$}]
  (D) at (1.35,1.25) {};

\node[vertex,label=above:{$\{1,5\}$}]
  (A) at (3.9,1.25) {};

\node[vertex,label=below:{$\{3,7\}$}]
  (C) at (1.35,-1.25) {};

\node[vertex,label=below:{$\{2,6\}$}]
  (B) at (3.9,-1.25) {};

\draw[directed edge] (A) to[bend right=18]
  node[midway,above] {$e_8$} (D);

\draw[directed edge] (A) to[bend left=18]
  node[midway,below] {$e_4$} (D);

\draw[directed edge] (D) to[bend right=18]
  node[midway,left] {$e_7$} (C);

\draw[directed edge] (D) to[bend left=18]
  node[midway,right] {$e_3$} (C);

\draw[directed edge] (C) to[bend left=18]
  node[midway,above] {$e_2$} (B);

\draw[directed edge] (C) to[bend right=18]
  node[midway,below] {$e_6$} (B);

\draw[directed edge] (B) to[bend left=18]
  node[midway,left] {$e_1$} (A);

\draw[directed edge] (B) to[bend right=18]
  node[midway,right] {$e_5$} (A);

\node at (2.625,-2.65) {$T^\pi$};

\end{tikzpicture}

\caption{
The oriented cycle $T=T_8$ and its quotient $T^\pi$ for
$\pi=\{\{1,5\},\{2,6\},\{3,7\},\{4,8\}\}$.
The edge labels and orientations are inherited from $T$, where
$e_u=(\gamma(u),u)$ and $\gamma=(1\,2\,\cdots\,8)$.
The quotient is a double unicycle of the second type.
}
\label{figure: double unicycle second type}
\end{figure}

Suppose the graph $\overline{T^\pi}$ consists of a single cycle of size $q$, therefore, $\pi$ is uniquely determined as there is a unique way to traverse the entire graph $T^\pi$ starting from $e_{2q}$ and going down up to $e_1$. For instance if $q=4$ there is the unique partition $\pi=\{1,5\}\{2,6\}\{3,7\}\{4,8\}$ such that $\overline{T^\pi}$ has a single cycle of size $4$ (See figure \ref{figure: double unicycle second type}).

However the graph $T^\pi$ might also have subgraphs lying outside the cycle of $\overline{T^\pi}$. Observe that the subgraphs of $T^\pi$ outside the cycle of $\overline{T^\pi}$ are double trees, so these can be counted using non-crossing pairings. Therefore, to count all possible graphs $T^\pi$, we just need to choose the edges that will be within the cycle of $\overline{T^\pi}$ and consider non-crossing pairings for the rest of edges. An example that illustrates this construction can be seen in Example \ref{Example: double unicycle second type two}. 

\begin{example}\label{Example: double unicycle second type two}
Consider $q=7$ so that $2q=14$. We first choose the edges that will be within the cycle of $\overline{T^\pi}$. Let us choose $e_5,e_6,e_7,e_8,e_9,e_{10},e_{13},e_{14}$. We pair the rest of edges in a non-crossing way. This gives the non-crossing partition
$$\rho=\{1,4\}\{2,3\}\{5,6,7,8,9,10,13,14\}\{11,12\}.$$
The giant block $\{5,6,7,8,9,10,13,14\}$ corresponds to the edges in the cycle of $\overline{T^\pi}$ while the blocks $\{1,4\},\{2,3\},\{11,12\}$ are to the non-crossing pairings that correspond to the subgraph of $T^\pi$ outside the cycle. 
To get $\pi$ we first consider the Kreweras complement of $\rho$:
$$\gamma\rho^{-1}=(1,5)(2,4)(3)(6)(7)(8)(9)(10)(11,13)(12)(14).$$
The graph $T^{\gamma\rho^{-1}}$ is almost a double unicycle of second type, except that instead of a cycle with all edges of multiplicity $2$ we have a cycle with all edges of multiplicity $1$ consisting of the edges of the block $\{5,6,7,8,9,10,13,14\}$ (See Figure \ref{figure: double unicycle second type two}). At this point, by the preceding construction, there is a unique way to merge the vertices (blocks of $\gamma\rho^{-1}$) so that we get a cycle with all edges of multiplicity $2$ (See Figure \ref{figure: double unicycle second type two}), this merging produces $\pi$ and we denote this partition by ${\gamma\rho^{-1}}^\prime$.
Before moving on, let us point out that the number of edges lying within the cycle of $\overline{T^\pi}$ must be a multiple of $4$ as the cycle of $\overline{T^\pi}$ has even size and every edge has multiplicity $2$. 
\begin{figure}[p]
\centering
\sffamily\sansmath
\captionsetup{
  font={small,sf},
  labelfont={bf,sf},
  justification=raggedright,
  singlelinecheck=false
}

\tikzset{
  figtwo vertex/.style={
    circle,draw,fill=white,
    inner sep=0pt,minimum size=4pt
  },
  figtwo arrow/.style={
    -{Stealth[length=1.4mm,width=1mm]},
    line width=0.55pt,
    shorten <=1pt,
    shorten >=1pt
  }
}

\begin{minipage}{0.44\linewidth}
\centering
\begin{tikzpicture}[font=\small]

  \foreach \k in {1,...,14} {
    \coordinate (v\k)
      at ({90-360/14*(\k-1)}:1.62);
    \coordinate (p\k)
      at ({90-360/14*(\k-0.5)}:1.48);
  }

  \filldraw[
    draw=red!70!black,
    fill=red!6,
    line width=0.8pt
  ]
    (p5)--(p6)--(p7)--(p8)--(p9)--(p10)
    --(p13)--(p14)--cycle;

  \foreach \a/\b in {1/4,2/3,11/12}
    \draw[blue!70!black,line width=0.9pt]
      (p\a)--(p\b);

  \foreach \a/\b in {1/5,2/4,11/13}
    \draw[dashed,gray!80,line width=0.6pt]
      (v\a)--(v\b);

  \foreach \k in {1,...,14} {
    \pgfmathtruncatemacro{\nextvertex}{mod(\k,14)+1}

    \draw[figtwo arrow]
      (v\nextvertex) to[bend right=7] (v\k);

    \node[font=\scriptsize]
      at ({90-360/14*(\k-0.5)}:2.14)
      {$e_{\k}$};
  }

  \foreach \k in {1,...,14} {
    \node[figtwo vertex] at (v\k) {};
    \node at ({90-360/14*(\k-1)}:1.84) {$\k$};
  }

  \node at (0,-2.65)
    {(a) $T=T_{14}$, $\rho$ and $\sigma$};

\end{tikzpicture}
\end{minipage}\hfill
\begin{minipage}{0.54\linewidth}
\centering
\begin{tikzpicture}[font=\small]

  \node[figtwo vertex,label=right:{$\{1,5\}$}]
    (A) at (45:1.5) {};

  \node[figtwo vertex,label=right:{$\{6\}$}]
    (V6) at (0:1.5) {};

  \node[figtwo vertex,label=below right:{$\{7\}$}]
    (V7) at (-45:1.5) {};

  \node[figtwo vertex,label=below:{$\{8\}$}]
    (V8) at (-90:1.5) {};

  \node[figtwo vertex,label=below left:{$\{9\}$}]
    (V9) at (-135:1.5) {};

  \node[figtwo vertex,label=left:{$\{10\}$}]
    (V10) at (180:1.5) {};

  \node[figtwo vertex,label=left:{$\{11,13\}$}]
    (E) at (135:1.5) {};

  \node[figtwo vertex,label=above:{$\{14\}$}]
    (V14) at (90:1.5) {};

  \node[figtwo vertex,label=right:{$\{2,4\}$}]
    (B) at (1.85,1.95) {};

  \node[figtwo vertex,label=right:{$\{3\}$}]
    (C) at (2.5,2.8) {};

  \node[figtwo vertex,label=above left:{$\{12\}$}]
    (G) at (-1.8,1.95) {};

  \foreach \s/\t/\n/\pos in {
    A/V14/14/above,
    V14/E/13/above,
    E/V10/10/left,
    V10/V9/9/left,
    V9/V8/8/below,
    V8/V7/7/below,
    V7/V6/6/right,
    V6/A/5/right} {
    \draw[figtwo arrow]
      (\s) to[bend right=8]
      node[midway,\pos,font=\scriptsize]
        {$e_{\n}$}
      (\t);
  }

  \foreach \s/\t/\n/\pos in {
    B/A/1/left,
    A/B/4/right,
    C/B/2/left,
    B/C/3/right,
    G/E/11/left,
    E/G/12/right} {
    \draw[figtwo arrow]
      (\s) to[bend right=18]
      node[midway,\pos,font=\scriptsize]
        {$e_{\n}$}
      (\t);
  }

  \node at (0.2,-2.65)
    {(b) $T^\sigma$, $\sigma=\gamma\rho^{-1}$};

\end{tikzpicture}
\end{minipage}

\par\medskip

\begin{tikzpicture}[font=\small]

  \node[figtwo vertex,label=right:{$\{1,5,9\}$}]
    (A) at (1.1,0.8) {};

  \node[figtwo vertex,label=right:{$\{2,4\}$}]
    (B) at (1.9,1.8) {};

  \node[figtwo vertex,label=right:{$\{3\}$}]
    (C) at (2.5,2.65) {};

  \node[figtwo vertex,label=right:{$\{6,10\}$}]
    (D) at (1.1,-0.8) {};

  \node[figtwo vertex,label=left:{$\{7,11,13\}$}]
    (E) at (-0.9,-0.8) {};

  \node[figtwo vertex,label=above left:{$\{8,14\}$}]
    (F) at (-0.9,0.8) {};

  \node[figtwo vertex,label=below:{$\{12\}$}]
    (G) at (-1.65,-1.7) {};

  \foreach \s/\t/\n/\bend/\pos in {
    A/F/8/-18/above,
    A/F/14/18/below,
    F/E/7/-18/left,
    F/E/13/18/right,
    E/D/10/18/above,
    E/D/6/-18/below,
    D/A/9/18/left,
    D/A/5/-18/right} {
    \draw[figtwo arrow]
      (\s) to[bend left=\bend]
      node[midway,\pos,font=\scriptsize]
        {$e_{\n}$}
      (\t);
  }

  \foreach \s/\t/\n/\pos in {
    B/A/1/left,
    A/B/4/right,
    C/B/2/left,
    B/C/3/right,
    E/G/12/left,
    G/E/11/right} {
    \draw[figtwo arrow]
      (\s) to[bend right=18]
      node[midway,\pos,font=\scriptsize]
        {$e_{\n}$}
      (\t);
  }

  \node at (0.3,-2.4) {(c) $T^\pi$};

\end{tikzpicture}

{\small
\[
\begin{aligned}
\rho
  &=(1\,4)(2\,3)
    (5\,6\,7\,8\,9\,10\,13\,14)(11\,12),\\
\sigma
  &=\gamma\rho^{-1}
    =(1\,5)(2\,4)(3)(6)(7)(8)(9)(10)
      (11\,13)(12)(14),\\
\pi
  &=(1\,5\,9)(2\,4)(3)(6\,10)
    (7\,11\,13)(8\,14)(12).
\end{aligned}
\]
}

\caption{
The cycle $T=T_{14}$ and two successive quotient graphs,
where $\gamma=(1\,2\,\cdots\,14)$ and
$e_u=(\gamma(u),u)$.
In (a), the blue chords and the red polygon represent
the blocks of the edge partition $\rho$;
dashed chords represent the non-singleton blocks
of the vertex partition $\sigma$.
The vertex identifications $\sigma\leq\pi$
turn (b) into (c).
The graph $T^\pi$ is a double unicycle of the second type.
}
\label{figure: double unicycle second type two}
\end{figure}
\end{example}

The previous construction motivates the following notation.

\begin{notation}
\begin{enumerate}
    \item We denote by $\NC_2^\prime(2q)$ the set of non-crossing partitions, with any block of size $2$ except for a single block of size multiple of $4$.
    \item For $\rho\in \NC_2^\prime(2q)$ we let $A_\rho$ be the block of size $4$.
    \item If $A_\rho=\{i_1,\dots,i_{4s}\}$ then we let $\rho^\prime$ be the pairing with blocks the same as $\rho$ except the block $A_\rho$ which is replaced by the blocks $$\{i_1,i_{2s+1}\},\{i_2,i_{2s+2}\},\dots,\{i_{2s},i_{4s}\}.$$
\end{enumerate} 
\end{notation}

\begin{remark}\label{Remark: Double unicycle of second type}
Observe that $\rho^\prime$ is precisely the partition whose blocks $\{u,v\}$ are such that $e_u$ and $e_v$ join the same pair of vertices in the double unicycle graph of second type. So, if we want $\mathbb{E}(x_i)\neq 0$ we require $\rho^\prime \leq ker(rr)$ as discussed at the beginning of the section.
\end{remark}

\begin{proposition}\label{Proposition: Infinitesimal contribution pi=q for double unicycle second type}
The following equation satisfies,
\begin{multline*}
\frac{1}{n^{q+1}}\sum_{\substack{\pi\in \cP(2q)\\ T^\pi \text{is a double unicycle} \\ \text{of second type} }}\sum_{\substack{i_{odd}=1,\dots,n \\ i_{even}=1,\dots,p \\ ker(i)=\pi}}\E(x_i)= 
\sum_{\substack{\rho\in \NC_2^\prime(2q) \\ \rho^\prime \leq ker(rr)}}\frac{1}{n}\left( \frac{p}{n}\right)^{\#(Even(\gamma\rho^{-1}))-|A_\rho|/4}[\mathbb{E}(x_{1,2}^2)\mathbb{E}(\overline{x_{1,2}}^2)]^{|A_\rho|/4}+O(\frac{1}{n^2}).
\end{multline*}
Here $\#(Even(\gamma\rho^{-1}))$ denotes the number of cycles of $\gamma\rho^{-1}$ consisting only of even numbers, which is well defined because $\gamma\rho^{-1}$ has only cycles of purely even or odd numbers.
\end{proposition}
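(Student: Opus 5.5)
The plan is to show that the set of second-type double unicycles $\pi$ with nonzero contribution is in bijection with $\NC_2^\prime(2q)$ via $\rho\mapsto\pi=(\gamma\rho^{-1})^\prime$. After that, I will compute $\E(x_i)$ and the number of index choices for each such $\pi$.

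\emph{Step 1 (the bijection).} Start from a second-type double unicycle $T^\pi$ for which $\E(x_i)\neq 0$ is possible. By the discussion preceding Remark~\ref{Remark: Double unicycle of second type}, every pair of parallel edges on the cycle consists of two edges of the same parity. Let $A$ be the set of labels of cycle edges, and let $\rho$ consist of $A$ together with the pairing $\overline{\pi}$ restricted to the off-cycle edges. The off-cycle parts are double trees hanging from the cycle. The leaf-removal induction used in Propositions~\ref{Proposition: pi prime is non-crossing} and~\ref{Proposition: pi is non-crossing} then shows that $\rho$ is non-crossing. Removing all hanging trees leaves a closed walk that goes twice around a cycle of even length, so $|A|\equiv 0 \pmod 4$. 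Hence $\rho\in\NC_2^\prime(2q)$.

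Conversely, given $\rho$, the quotient $T^{\gamma\rho^{-1}}$ consists of double trees attached to a simple cycle of length $|A_\rho|$, exactly as in Example~\ref{Example: double unicycle second type two}. There is a unique way to fold this cycle onto a cycle of half the length with edges traversed twice in the same direction. The folding identifies the $j$-th and $(j+2s)$-th cycle vertices. This produces $\pi$, and the edge pairing of $\pi$ is $\rho^\prime$. In the folding, even vertices are identified only with even vertices and odd with odd, so exactly $|A_\rho|/4$ even-type merges and $|A_\rho|/4$ odd-type merges occur. The main obstacle is checking that these two maps are mutually inverse and that the folding is well defined, in particular that $s$ consecutive identifications of the cycle vertices glue the cycle correctly. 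I would handle this by first treating the case with no hanging trees, where $\pi$ is forced as in Figure~\ref{figure: double unicycle second type}. I would then reduce to it by removing leaves, using the same induction scheme as before.

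\emph{Step 2 (the expectation).} Fix $i$ with $\ker(i)=\pi$. Each off-cycle pair contributes $\E|x|^2=1$ when $\rho^\prime\le ker(rr)$, and $0$ otherwise by independence and centering. Each cycle pair has equal parity and therefore the same orientation in $C^\pi$. An odd pair gives $\E(x_{1,2}^2)$ and an even pair gives $\E(\overline{x_{1,2}}^2)$, both requiring equal labels $r$, which again is encoded by $\rho^\prime\le ker(rr)$. Parities alternate around the cycle of length $|A_\rho|/2$, so the product is $[\E(x_{1,2}^2)\E(\overline{x_{1,2}}^2)]^{|A_\rho|/4}$.

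\emph{Step 3 (counting).} By the argument of Proposition~\ref{Proposition: N order of pi sum}, the number of $i$ with $\ker(i)=\pi$ equals $p^{\#Even(\pi)}n^{\#Odd(\pi)}(1+O(1/n))$. From Step 1, $\#Even(\pi)=\#(Even(\gamma\rho^{-1}))-|A_\rho|/4$ and $\#(\pi)=q$. Therefore $\#Odd(\pi)=q-\#Even(\pi)$, and dividing by $n^{q+1}$ gives $\frac1n(p/n)^{\#(Even(\gamma\rho^{-1}))-|A_\rho|/4}$. Summing over $\rho$ yields the stated formula, with all corrections absorbed into $O(1/n^2)$.
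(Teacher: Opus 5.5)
Your route is the same as the paper's. You use the correspondence $\rho\mapsto(\gamma\rho^{-1})^\prime$ with $\NC_2^\prime(2q)$, the factor $[\E(x_{1,2}^2)\E(\overline{x_{1,2}}^2)]^{|A_\rho|/4}$ from the same-parity parallel edges on the cycle, and the leading count $p^{\#Even}n^{\#Odd}$. Steps 2 and 3 are fine. Your leaf-removal argument for the non-crossingness of $\rho$ and for $|A|\equiv 0 \pmod 4$ supplies detail that the paper only asserts.

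\textbf{The gap.} The check you flagged as the main obstacle fails when $s=1$, i.e.\ $|A_\rho|=4$.
\begin{itemize}
\item Folding a cycle of length $4$ by identifying opposite vertices does not produce a cycle. It produces two vertices joined by four edges, two in each direction. That is a 2-4 tree, not a double unicycle.
\item For instance, $q=3$ and $\rho=\{1,2,3,4\}\{5,6\}$ give $\gamma\rho^{-1}=(1\,5)(2)(3)(4)(6)$, and the folding gives $\pi=\{1,3,5\}\{2,4\}\{6\}$.
\item Such $\pi$ are already accounted for in Proposition~\ref{Proposition: Infinitesimal contribution pi=q for 2-4 tree}.
\end{itemize}
Your forward map never reaches this case. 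The cycle length $L$ of $\overline{T^\pi}$ is even, as you show, and cannot equal $2$ because $\overline{T^\pi}$ has no multiple edges. Hence $L\geq 4$ and $|A|=2L\geq 8$. So your two maps are mutually inverse only between second-type double unicycles with non-vanishing contribution and $\{\rho\in\NC_2^\prime(2q):|A_\rho|\geq 8\}$.

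\textbf{Why it matters.} Admitting $|A_\rho|=4$ adds spurious terms to the right-hand side. Take $q=2$ and $r_1=r_2$.
\begin{itemize}
\item The partition $\rho=\{1,2,3,4\}$ has $\rho^\prime=\{1,3\}\{2,4\}\leq ker(rr)$ and $\gamma\rho^{-1}=\mathrm{id}$. So the right-hand side equals $\frac{1}{n}\frac{p}{n}|\E(x_{1,2}^2)|^2+O(\frac{1}{n^2})$.
\item The left-hand side is $0$, because a contributing second-type double unicycle needs at least $8$ edges.
\item Consistently, a direct computation gives $\frac{1}{n}\E\circ\Tr(M^2)=\frac{p}{n}+(\frac{p}{n})^2+\frac{1}{n}\frac{p}{n}(\E|x_{1,2}|^4-2)$ exactly, with no pseudo-variance term.
\end{itemize}
The paper's statement and proof have the same flaw, since its notation allows the special block of $\NC_2^\prime(2q)$ to have size $4$. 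Once the sum is restricted to $|A_\rho|\geq 8$, your argument goes through as written.
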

\begin{proof}
Thanks to the previous construction, we know the partitions $\pi$ such that $T^\pi$ is a double unicycle of second type (and $\mathbb{E}(x_i)\neq 0$) can be counted by the set $\NC_2^\prime(2q)$ where each $\rho\in \NC_2^\prime(2q)$ produces a double unicycle of second type $T^{{\gamma\rho^{-1}}^\prime}$. Further, thanks to Remark \ref{Remark: Double unicycle of second type}, if we require $\mathbb{E}(x_i)\neq 0$ we must consider $\rho^\prime\leq ker(rr)$. For each block $\{u,v\}\in {\gamma\rho^{-1}}^\prime$, there are three possible scenarios:
\begin{enumerate}
    \item Either $\overline{e_u}$ lies outside the cycle of $\overline{T^\pi}$ in which case the contribution to $\mathbb{E}(x_i)$ is $\mathbb{E}(x_{1,2}\overline{x_{1,2}})=1$,
    \item or, $\overline{e_u}$ lies within the cycle and $u$ and $v$ are both even in which case the contribution to $\mathbb{E}(x_i)$ is $\mathbb{E}(\overline{x_{1,2}}\overline{x_{1,2}})$,
    \item or, $\overline{e_u}$ lies within the cycle and $u$ and $v$ are both odd in which case the contribution to $\mathbb{E}(x_i)$ is $\mathbb{E}(x_{1,2}x_{1,2})$.
\end{enumerate}
The cycle of $\overline{T^{{\gamma\rho^{-1}}^\prime}}$ has length $|A_\rho|/2$, and half of the edges contribute $\mathbb{E}(x_{1,2}^2)$ and half of the edges contribute $\mathbb{E}(\overline{x_{1,2}}^2)$. Therefore,
\begin{align}\label{Aux 12}
\frac{1}{n^{q+1}}\sum_{\substack{\pi\in \cP(2q)\\ T^\pi \text{is a double unicycle} \\ \text{of second type} }}\sum_{\substack{i_{odd}=1,\dots,n \\ i_{even}=1,\dots,p \\ ker(i)=\pi}}\E(x_i) 
=\frac{1}{n^{q+1}}\sum_{\substack{\rho\in \NC_2^\prime(2q) \\ \rho^\prime \leq ker(rr)}}[\mathbb{E}(x_{1,2}^2)\mathbb{E}(\overline{x_{1,2}}^2)]^{|A_\rho|/4}\sum_{\substack{i_{odd}=1,\dots,n \\ i_{even}=1,\dots,p \\ ker(i)={\gamma\rho^{-1}}^\prime}}1.
\end{align}
The number of blocks of ${\gamma\rho^{-1}}^\prime$ is precisely $q$ because $T^{{\gamma\rho^{-1}}^\prime}$ is double unicycle. Further, the blocks of ${\gamma\rho^{-1}}^\prime$ consist of purely even or odd numbers, thus
$$\sum_{\substack{i_{odd}=1,\dots,n \\ i_{even}=1,\dots,p \\ ker(i)={\gamma\rho^{-1}}^\prime}}1=n^{\#(Odd({\gamma\rho^{-1}}^\prime))}p^{\#(Even({\gamma\rho^{-1}}^\prime))}+O(n^{q-1}),$$
where $\#Odd(\pi)$ and $\#(Even(\pi))$ denote the number of blocks of $\pi$ with only odd or even numbers respectively. Combining this with Equation \ref{Aux 12} yields 
\begin{align}\label{Aux 13}
&\frac{1}{n^{q+1}}\sum_{\substack{\pi\in \cP(2q)\\ T^\pi \text{is a double unicycle} \\ \text{of second type} }}\sum_{\substack{i_{odd}=1,\dots,n \\ i_{even}=1,\dots,p \\ ker(i)=\pi}}\E(x_i) \nonumber\\
&= 
\frac{1}{n^{q+1}}\sum_{\substack{\rho\in \NC_2^\prime(2q) \\ \rho^\prime \leq ker(rr)}}[\mathbb{E}(x_{1,2}^2)\mathbb{E}(\overline{x_{1,2}}^2)]^{|A_\rho|/4}n^{\#(Odd({\gamma\rho^{-1}}^\prime))}p^{\#(Even({\gamma\rho^{-1}}^\prime))}+O(\frac{1}{n^2}) \nonumber\\
&= \frac{1}{n}\sum_{\substack{\rho\in \NC_2^\prime(2q) \\ \rho^\prime \leq ker(rr)}}[\mathbb{E}(x_{1,2}^2)\mathbb{E}(\overline{x_{1,2}}^2)]^{|A_\rho|/4}\left(\frac{p}{n}\right)^{\#(Even({\gamma\rho^{-1}}^\prime))}+O(\frac{1}{n^2}).
\end{align}
To conclude it is easy to observe that
$$\#(Even({\gamma\rho^{-1}}^\prime))=\#(Even(\gamma\rho^{-1}))-|A_\rho|/4.$$
\end{proof}

As a consequence we get a full expansion up to order $\frac{1}{n}$ without the assumption $\mathbb{E}(x_{1,2}^2)=0$. We omit the proof as it follows the same as Theorem \ref{Lemma: Expansion of moments 5}, just considering the extra summation corresponding to the case where $T^\pi$ is double unicycle of second type whose contribution was found in Proposition \ref{Proposition: Infinitesimal contribution pi=q for double unicycle second type}.

\begin{lemma}\label{Lemma: Expansion of moments without pseudo variace zero}
Let $M^{(r_1)},\dots, M^{(r_q)}$ be a collection of independent covariance matrices, then
\begin{align*}
&\frac{1}{n}\E\circ \textup{\Tr}(M^{(r_1)}\cdots M^{(r_q)}) \\
&=\sum_{\substack{\sigma\in \NC(q) \\ \sigma\leq ker(r)}}\left(\frac{p}{n}\right)^{\#(\sigma)}+ 
\frac{1}{n}\sum_{\substack{\sigma\in \NC(q) \\ \sigma\leq ker(r)}}\left(\frac{p}{n}\right)^{\#(\sigma)-1}\Bigg(\sum_{\substack{A,B\in 2\sigma \\ d_{T^\rho}(A,B)=2}}(\alpha_{a,b}-1)+\sum_{\substack{A,B\in 2Kr(\sigma)-1 \\ d_{T^\rho}(A,B)=2}}\frac{p}{n}(\alpha_{a,b}-1)\Bigg) \\
&\hspace{1cm}+\frac{1}{n}\sum_{\substack{\rho\in \NC_2^\prime(2q) \\ \rho^\prime \leq ker(rr)}}\left( \frac{p}{n}\right)^{\#(Even(\gamma\rho^{-1}))-|A_\rho|/4}[\mathbb{E}(x_{1,2}^2)\mathbb{E}(\overline{x_{1,2}}^2)]^{|A_\rho|/4} +O(\frac{1}{n^2})
\end{align*}
where we recall the notation of Theorem \ref{Lemma: Expansion of moments 5} and Proposition \ref{Proposition: Infinitesimal contribution pi=q for double unicycle second type}.
\end{lemma}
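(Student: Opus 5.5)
The proof repeats the argument for Theorem \ref{Lemma: Expansion of moments 5} without the step that used $\E(x_{1,2}^2)=0$. By Lemma \ref{Lemma: Expansion of moments 2} and Proposition \ref{Proposition: N order of pi sum}, the normalized moment splits into three pieces: the sum over $\pi$ with $\#(\pi)=q+1$, the sum over $\pi$ with $\#(\pi)=q$, and a remainder of order $O(1/n^2)$. By Proposition \ref{Proposition: Characterization of pi=m/2}, the $\#(\pi)=q$ part splits over the three possible types of $T^\pi$: 2-4 trees, double unicycles of first type, and double unicycles of second type.

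First I check which earlier results still hold without the hypothesis $\E(x_{1,2}^2)=0$. Propositions \ref{Proposition: contribution of 2-4 tree}, \ref{Proposition: contribution of double unicycle of first type}, \ref{Proposition: Infinitesimal contribution pi=q+1}, \ref{Proposition: Infinitesimal contribution pi=q for 2-4 tree} and \ref{Proposition: Infinitesimal contribution pi=q for double unicycle first type} only use centering, unit variance and independence. In each of these graphs, every pair of edges joining the same two vertices carries one conjugated and one unconjugated factor in $C^\pi$, so the pseudo-variance never appears. Hence these results hold verbatim in the present setting.

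With that in hand, the computation in the proof of Theorem \ref{Lemma: Expansion of moments 5} goes through word for word up to Equation \eqref{Aux 11}, with one change. The term from double unicycles of second type, which was discarded there using Proposition \ref{Proposition: contribution of double unicycle of second type}, must now be kept. The same cancellations apply: the falling-factorial corrections cancel against the $d\neq 2$ first-type unicycle terms, and the $L_\pi$ sums cancel against the cross terms $A\in\sigma$, $B\in Kr(\sigma)$. These cancellations produce the first two lines of the claimed formula, after relabeling $\sigma\mapsto\sigma/2$.

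The extra line is then exactly Proposition \ref{Proposition: Infinitesimal contribution pi=q for double unicycle second type}, added in. The main obstacle is confirming that this proposition's enumeration is a genuine bijection: each second-type unicycle $\pi$ with $\E(x_i)\neq0$ must correspond to exactly one $\rho\in\NC_2^\prime(2q)$ with $\rho^\prime\le ker(rr)$. The two sides of this are as follows.
\begin{itemize}
\item \emph{Injectivity.} The cycle edges form a single block of size divisible by $4$. Contracting them, the off-cycle double trees give a non-crossing pairing. The merge taking $\gamma\rho^{-1}$ to ${\gamma\rho^{-1}}^\prime$ is unique.
\item \emph{Vanishing cases.} When $\overline{\pi}$ pairs cycle edges of different parity, the factor $\E(x_{1,2}\overline{x_{2,1}})=0$ kills the term. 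Independence among the distinct matrix labels forces $\rho^\prime\le ker(rr)$.
\end{itemize}
Since that proposition is already stated and proven earlier in the paper, I take it as given. Summing the three pieces then yields the lemma.
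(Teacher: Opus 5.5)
Your proposal follows the paper's own argument: the authors omit the proof, saying only that it repeats the proof of Theorem~\ref{Lemma: Expansion of moments 5} while keeping the second-type double unicycle sum and evaluating it by Proposition~\ref{Proposition: Infinitesimal contribution pi=q for double unicycle second type}, which is exactly your plan. One caution on the bijection you single out as the main obstacle: it, and hence the third line of the formula, is only right if $\NC_2^\prime(2q)$ is read as requiring $|A_\rho|\geq 8$, because a block of size $4$ produces a 2-4 tree rather than a second-type unicycle, and that 2-4 tree's contribution $\E|x_{1,2}|^4$ is already counted in the $\alpha_{a,b}$ terms (for $q=2$ and $r_1=r_2$, including it would add a spurious $\frac{1}{n}\frac{p}{n}|\E(x_{1,2}^2)|^2$).
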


\end{document}